\newcommand{\e}{\ensuremath{\mathbf{e}}}
\newcommand{\R}{\mathbb{R}} \newcommand{\Z}{\mathbb{Z}}
\newcommand{\N}{\mathbb{N}} \newcommand{\s}{\mathbb{S}}
\theoremstyle{plain} \newtheorem{theorem}{Theorem}[section]
\newtheorem{lemma}[theorem]{Lemma}
\newtheorem{corollary}[theorem]{Corollary}
\theoremstyle{definition}
 \theoremstyle{remark}
\begin{document}

\title[Global well-posedness for Schr\"{o}dinger equation]{%
  Global well-posedness and scattering for Derivative Schr\"{o}dinger equation}
\author{Baoxiang Wang, \,  Yuzhao Wang}
\address{LMAM, School of Mathematical Sciences, Peking University, Beijing
100871, China}

\email{wbx@math.pku.edu.cn, wangyuzhao2008@gmail.com}

\begin{abstract}
In this paper we  study the Cauchy problem for the elliptic and non-elliptic derivative
nonlinear Schr\"odinger equations in higher spatial dimensions ($n\geq 2$) and some global
well-posedness results with small initial data in critical Besov spaces $B^s_{2,1}$ are obtained.
As by-products, the scattering results with small initial data are also obtained.\\

{Key words: Derivative Schr\"odinger equations, Global
well-posedness, Scattering, Non-elliptic case, Besov spaces}
\end{abstract}

\subjclass[2010]{35Q55, 42B37}

\maketitle

\section{Introduction, main results and notations}\label{sect:intro_main}

\subsection{Introduction}

In this paper, we mainly consider the Cauchy problem for the elliptic and non-elliptic
derivative nonlinear  Schr\"odinger (DNLS) equation
\begin{equation}\label{ds}
 (i\partial_t +\Delta_\pm) u
=F(u, \bar{u}, \nabla u, \nabla \bar{u}),
\quad
u(0,x)=u_{0}(x),
 \end{equation}
where $u$ is a complex-valued function of $(t,x)\in \mathbb{R}
\times \mathbb{R}^n$,
\begin{align}
\Delta_\pm u = \sum^n_{i=1} \varepsilon_i \partial^2_{x_i}u, \quad
\varepsilon_i \in \{1,\,  -1\}, \quad i=1,...,n,
 \label{pm}
\end{align}
$\nabla =(\partial_{x_1},..., \partial_{x_n})$ and $F:
\mathbb{C}^{2n+2} \to \mathbb{C}$ is a polynomial,
\begin{align}\label{poly}
F(z) = P(z_1,..., z_{2n+2})= \sum_{m\le |\beta|< \infty} c_\beta
z^\beta, \quad c_\beta \in \mathbb{C},
\end{align}
here $m \in \mathbb{N}$, $m\ge 3$, $n\ge 2$, $\beta=(\beta_1,
\ldots, \beta_{2n+2})\in \mathbb{Z}_+^{2n+2}$.
The DNLS covers the following derivative nonlinear Schr\"odinger equations as special cases.
\begin{align}\label{dnls-1}
 (i\partial_t +\Delta_\pm) u
=|u|^2 \vec{\lambda} \cdot\nabla u+ u^2 \vec{\mu} \cdot\nabla \bar{u},
\end{align}
\begin{align}\label{dnls-2}
 (i\partial_t +\Delta_\pm) u
=\frac{2\bar{u}}{1+|u|^2}\sum_{j=1}^n \varepsilon_j (\partial_{x_j}u)^2.
\end{align}
Eq. \eqref{dnls-1} including the non-elliptic case
describes the strongly interacting many-body systems near
criticality as recently described in terms of nonlinear dynamics
\cite{TD, DT, CT}. Eq. \eqref{dnls-2} is an equivalent form of the Schr\"odinger map
(elliptic case) and the Heisenberg map  (non-elliptic case)
\begin{align}\label{dnls-3}
  \partial_t M = M \times \Delta_\pm M
\end{align}
under the  stereographic projection (cf. \cite{BIKT, DW98, IK, IK2,
Zhou-Guo-Tan}), respectively.

The local and global well posedness of DNLS \eqref{ds} have been
extensively studied, see Bejenaru and Tataru \cite{Be-Ta}, Chihara
\cite{Chih1,Chih2}, Kenig, Ponce and Vega \cite{KPV1,KPV2},
Klainerman \cite{Klai}, Klainerman and Ponce \cite{Kl-Po}, Ozawa and
Zhai \cite{Oz-Zh}, Shatah \cite{Shat} and the authors \cite{WW}.
When the nonlinear term $F$ satisfies some energy structure
conditions, or the initial data suitably decay, the energy method,
which went back to the work of Klainerman \cite{Klai} and was
developed in \cite{Chih1,Chih2,Kl-Po,Oz-Zh, Shat}, yields the global
existence of DNLS \eqref{ds} in the elliptical case $\Delta_\pm=
\Delta$. Recently, Ozawa and Zhai obtained the global well posedness
in $H^{s}(\mathbb{R}^n)$ ($n\ge 3$, $s>2+n/2$, $m\ge 3$) with small
data for DNLS \eqref{ds} in the elliptical case, where an energy
structure condition on $F$ is still required.

By setting up the local smooth effects for the solutions of the
linear Schr\"odinger equation, Kenig, Ponce and Vega
\cite{KPV1,KPV2} were able to deal with the non-elliptical case and
they established the local well posedness of Eq. \eqref{ds} in $H^s$
with $s\gg n/2$.  Recently, the local well posedness results have
been generalized to the quasi-linear (ultrahyperbolic) Schr\"odinger
equations, see \cite{KePoVe3,KePoRoVe}. By using Kenig, Ponce and Vega's local smooth effects
\cite{KPV1} and establishing time-global maximal function estimates
in space-local Lebesgue spaces, the authors \cite{WW} also showed
the global well posedness of elliptic and non-elliptic DNLS for small data in Besov
spaces $B^s_{2,1}(\mathbb{R}^n)$ with $s>n/2+3/2$, $m\ge 3+4/n$. Wang, Han and Huang \cite{whh} was able to deal with the case $m\ge 3$ and $n\ge 3$ by using the frequency-uniform decomposition techniques, where the initial data can be in modulation spaces $M^{3/2}_{2,1}$ and so, in Sobolev space $H^s$ with $s>n/2+3/2$. However, for the initial data in critical Sobolev spaces, the global well posedness of DNLS \eqref{ds} for both elliptic and non-elliptic cases is still unsolved.

In this paper, we will improve the  results in higher spatial dimensions
 \cite{WW, whh} to critical Besov spaces.

\medskip

\subsection{Notations}
Throughout this paper, we fix $k\in \mathbb{N}$. For $x, y\in \R^+$,
$x\lesssim y$ means that there exists $C>0$ such that $x\leq Cy$. By
$x\sim y$ we mean $x\lesssim y$ and $y\lesssim x$. Let $\chi \in
C^\infty_0((-2,2))$ be an even, non-negative function such that
$\chi(s)=1$ for $|s|\leq 1$. We define $\psi(\xi):=\chi(|\xi|)-\chi(2|\xi|)$
and $\psi_j:=\psi(2^{-j}\cdot)$.  Then,
\[\sum_{j\in\Z}\psi_j(\xi)=1, \quad{ \rm for } \
\xi\in \R^n, \text{ }\xi\neq 0.\]
Define
\begin{equation*}
  \widehat{P_j u}(\xi):=\psi_j(\xi) \widehat{u}(\xi),
\end{equation*}
and $P_{\geq M}=\sum_{j\geq
  M} P_j$ as well as $P_{<M}=I-P_{\geq M}$. Also we define the operator $\tilde{P}_j$ by
  \[\tilde{P}_j= P_{j-1}+P_j+P_{j+1},\]
  which satisfies
  \[\tilde{P}_j\circ P_j=P_j.\]
We denote by $\mathcal{S}(\R^n)$ and $\mathcal{S}'(\R^n)$ the Schwartz space and its dual space, respectively. The Besov spaces $\dot B^{s}_{2,1}(\R^n)$ and $ B^{s}_{2,1}(\R^n)$ are respectively the completions of $\mathcal{S}(\R^n)$ in $\mathcal{S}'(\R^n)$ with respect to the norms
\begin{align*}
  \|u\|_{\dot B^{s}_{2,1}(\R^n)}&:= \sum_{j=-\infty}^
    {\infty}2^{sj}\|P_{j}u\|_{L^{2}};\\
  \|u\|_{B^{s}_{2,1}(\R^n)}&:=\|P_{\leq 0}u\|_{L^{2}}+ \sum_{j=1}^
    {\infty}2^{sj}\|P_{j}u\|_{L^{2}}.
\end{align*}
For Banach spaces $X$ and $Y$, we define the the Banach space $X\cap
Y$ by the norm
\begin{align*}
\|u\|_{X\cap Y}&:= \|u\|_X+\|u\|_Y
\end{align*}
and $X\cup
Y$ by the norm
\begin{align*}
\|u\|_{X\cup Y}&:= \inf\{\|f\|_X+\|g\|_Y: \,u=f+g, \,f\in X, g\in
Y\}.
\end{align*}
The Fourier transform for any Schwartz function $f $ is defined by
\begin{equation*}
  \widehat{f}(\xi)=\mathcal{F}f(\xi)=c_0
  \int_{\R^n}e^{-ix \cdot \xi}f(x) \,dx,
\end{equation*}
 and extended to $\mathcal{S}'(\R^{n})$ by
duality. In the same way, for a function $u(t,x)$ on $\R\times\R^n$,
we define its time-space Fourier transform
\begin{equation*}
  \widehat{u}(\tau,\xi)=\mathcal{F}_{t,x}u(\tau,\xi)=c_1
  \int_{\R^n\times \R}e^{-i( t\tau +x \cdot \xi)}u(t,x) \,dtdx,
\end{equation*}

For any vector $\mathbf{e}\in\mathbb{S}^{n-1}$ let
$$P_{\mathbf{e}}=\{\xi\in\mathbb{R}^n:\xi\cdot\mathbf{e}=0\}.$$
Then for $p,q\in[1,\infty]$,
define the normed spaces
$L^{p,q}_{\mathbf{e}}=L^{p,q}_{\mathbf{e}}(\mathbb{R}\times\mathbb{R}^n)$,
\begin{equation}\label{pq}
\begin{split}
&L^{p,q}_{\mathbf{e}}=\{u\in L^2(\mathbb{R}\times \mathbb{R}^n):\|u\|_{L^{p,q}_{\mathbf{e}}}<\infty\},
\\&\text{ where \ } \|u\|_{L^{p,q}_{\mathbf{e}}}=\Big[\int_{\mathbb{R}}\Big[\int_{P_\mathbf{e}\times
\mathbb{R}}|u(t,r\mathbf{e}+v)|^q\,dvdt\Big]^{p/q}\,dr\Big]^{1/p}.
\end{split}
\end{equation}
Let $\mathbf{e}_0=(1,0,\ldots,0)$, we can fix a space rotation
matrix $A$, which depend on $\e$, such that
\begin{align} \label{A}
A\mathbf{e}=\mathbf{e}_0.
\end{align}
We have
\begin{align}\label{A1}
\|u(t,x)\|_{L^{p,q}_{\e}}=\|u(t,A^{-1}x)\|_{L^{p}_{x_1}L^{q}_{\bar{x},t}},
\end{align}
where $\bar{x}\in \R^{n-1}$ and $x=(x_1, \bar{x})$.

In view of \eqref{pm}, we denote
\begin{align}
|\xi|^2_\pm = \sum^n_{i=1} \varepsilon_i \xi_i^2, \quad
\varepsilon_i \in \{1,\,  -1\}, \quad i=1,...,n,
 \label{pm1}
\end{align}
and  by $W_\pm(t)$ the linear non-elliptic Schr\"odinger semi-group
\begin{equation*}
  \widehat{W_\pm(t)\phi}(\xi)= e^{-it|\xi|^2_\pm }\widehat{\phi}(\xi).
\end{equation*}
For $\e=(e_1,\cdots, e_n)\in \s^{n-1}$, denote
\begin{align} \label{pme}
\mathbf{e}_\pm = (\varepsilon_1 e_1,\ldots,\varepsilon_n e_n)\in \mathbb{S}^{n-1}.
\end{align}
where $\varepsilon_i \in \{1,\,  -1\}$, $i=1, \ldots, n$ are the same as in \eqref{pm1}.

Let $A$ be as in \eqref{A}.  We have
\begin{align}\label{s21}
\partial_{\xi_{1}}(|A^{-1}\xi|^2_\pm):=&\partial_t\Big|_{t=0}(|A^{-1}(\xi+t\e_0)|^2_\pm)=
\partial_t\Big|_{t=0}(|A^{-1}\xi+t\e|^2_\pm)\nonumber\\=&
\partial_t\Big|_{t=0}\big(|A^{-1}\xi|^2_\pm+t^2|\e|^2_\pm+2tA^{-1}\xi\cdot
\e_\pm\big)\\=&2(A^{-1}\xi)\cdot\mathbf{e}_\pm.\nonumber
\end{align}
This derivative relation was first observed in \cite{W} and will be
used extensively in the sequel. Define the directional derivative
along $\mathbf{e}$ by
\begin{eqnarray}\label{dd}
\widehat{D_{\e}f}(\xi)=(\xi\cdot \e)\widehat{f}(\xi).
\end{eqnarray}
Now define the projection operator $Q^{\mathbf{e}}_{k,l}$ related to
$\e$ as
\begin{eqnarray}\label{q}
\widehat{Q^{\mathbf{e}}_{k,l}f}(\xi)=\psi_{\geq k-l}(\xi\cdot
\mathbf{e})\widehat{f}(\xi),
\end{eqnarray}
which are cut-offs in frequency space along the direction $\e$.

\medskip

\subsection{Main results}

First, we consider the global well-posedness of DNLS \eqref{ds}.

\begin{theorem}\label{t3}
Assume that $m\ge 3$ for $n\ge 3$, and $m\ge 4$ for $n=2$. Suppose that $u_0 \in
 B^{n/2+1}_{2,1}(\R^n)$ and $\|u_0\|_{
B^{n/2+1}_{2,1}(\R^n)}<\delta$ for some small $\delta>0$.   Then DNLS \eqref{ds} has  a unique solution
  \begin{equation*}
    u\in  \tilde{Z}_{2,1}^{n/2+1}\subset
    C(\R;  B^{n/2+1}_{2,1}(\R^n)),
  \end{equation*}
where $\tilde{Z}_{2,1}^{n/2+1}$ is defined in \eqref{no5'}.  Moreover,  the scattering operator carries a whole neighborhood in $B^{n/2+1}_{2,1}(\R^n)$ into $B^{n/2+1}_{2,1}(\R^n)$.
\end{theorem}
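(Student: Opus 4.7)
The plan is to construct $u$ as the fixed point of the Duhamel map
\begin{equation*}
\mathcal{T}u(t) = W_\pm(t)u_0 - i \int_0^t W_\pm(t-s)\, F\bigl(u,\bar u, \nabla u, \nabla \bar u\bigr)(s)\, ds
\end{equation*}
in the resolution space $\tilde{Z}_{2,1}^{n/2+1}$. I would define $\tilde{Z}_{2,1}^{n/2+1}$ frequency-by-frequency: for each $P_j u$ bundle together the energy norm $L^\infty_t L^2_x$, the admissible Strichartz norms, the directional local-smoothing norm $\|D_{\e}^{1/2}\cdot\|_{L^\infty_{\e} L^2_{\bar x, t}}$ uniformly in $\e\in\s^{n-1}$, and the maximal-function norm $L^p_{\e} L^\infty_{\bar x, t}$ at the appropriate scaling exponent; then weight by $2^{j(n/2+1)}$ and sum in $\ell^1_j$ to match the Besov target $B^{n/2+1}_{2,1}$. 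A standard contraction on a ball of radius $\sim \delta$ yields existence, uniqueness, and scattering (the latter from the bound $\|F(u)\|_{L^1_t B^{n/2+1}_{2,1}} \lesssim \delta^m$).

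The linear estimates for $W_\pm(t)\phi$ reduce, after the rotation $A$ of \eqref{A} and Plancherel, to a one-dimensional oscillatory integral in which the Jacobian $\partial_{\xi_1}|A^{-1}\xi|^2_\pm = 2(A^{-1}\xi)\cdot\e_\pm$ from \eqref{s21} is exactly what produces the half-derivative gain in the directional smoothing bound
\begin{equation*}
\|D_{\e}^{1/2} W_\pm(t) P_j \phi\|_{L^\infty_{\e} L^2_{\bar x, t}} \lesssim \|P_j \phi\|_{L^2}.
\end{equation*}
Non-ellipticity makes this Jacobian vanish on a codimension-one set; that set is isolated by the projections $Q^{\e_\pm}_{k,l}$ of \eqref{q}, and one establishes the local smoothing on its complement while falling back on Strichartz on the degenerate piece. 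The dual local-smoothing estimate then controls $\int_0^t W_\pm(t-s)F(s)\,ds$ by $\|F\|_{L^1_{\e} L^2_{\bar x, t}}$ through a Christ--Kiselev argument, and the maximal-function estimate for $W_\pm(t)P_j$ in $L^p_{\e} L^\infty_{\bar x, t}$ is obtained analogously.

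For the multilinear estimate, take a generic monomial in $F$ with $|\beta| \ge m \ge 3$ factors, one of which carries a derivative. Place that derivative factor in the local-smoothing slot $L^\infty_{\e} L^2_{\bar x, t}$, which absorbs one full derivative after the directional gain is summed and an appropriate $\e$ is chosen; put a companion factor in the same slot; and distribute the remaining $m-2$ factors in $L^2_{\e} L^\infty_{\bar x, t}$ via the maximal-function estimate. H\"older in the $\e$-sliced mixed-norm spaces then bounds $\|F(u)\|_{L^1_{\e} L^2_{\bar x, t}}$ by a product of $\tilde{Z}_{2,1}^{n/2+1}$ norms, and the Littlewood--Paley summation closes thanks to the $\ell^1$ Besov structure at the critical scale $s = n/2+1$. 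The dichotomy $m\ge 3$ for $n\ge 3$ versus $m\ge 4$ for $n=2$ originates here: when $n=2$ the maximal-function bound sits at its scaling endpoint, one of the $L^2_{\e} L^\infty$ slots gains no extra integrability in the frequency localization, and an additional nonlinear factor is needed to compensate.

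The main obstacle is this critical-level summation. In the subcritical works \cite{WW, whh} the hypothesis $s > n/2+3/2$ provides an exponential gain $2^{-\eps j}$ for each high-frequency interaction and tolerates crude Sobolev embeddings between mixed-norm spaces; at $s=n/2+1$ no such slack remains, so every multilinear piece of $F(u) - F(v)$ must close scale-invariantly and with no logarithmic losses. This dictates the careful design of $\tilde{Z}_{2,1}^{n/2+1}$ --- it must carry \emph{every} auxiliary norm that appears in the multilinear estimate --- and it requires the directional cut-offs $Q^{\e_\pm}_{k,l}$ to interact compatibly with the Littlewood--Paley blocks $P_j$. Once the multilinear bound is closed, contraction gives the unique fixed point and scattering follows by the usual argument.
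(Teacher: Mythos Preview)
Your outline captures the contraction-mapping structure and correctly identifies the local-smoothing/maximal-function machinery, but there is a genuine gap in the multilinear step. You write ``take a generic monomial in $F$ with $|\beta|\ge m\ge 3$ factors, one of which carries a derivative,'' but the nonlinearity in \eqref{poly} is a polynomial in $(u,\bar u,\nabla u,\nabla\bar u)$ and explicitly includes pure-power monomials such as $u^\kappa$ with $\kappa\ge m$ and \emph{no} derivative factor at all. For these terms your scheme of placing ``that derivative factor in the local-smoothing slot'' is unavailable; if you instead put an underived factor $P_{j_1}u$ in $L^{\infty,2}_{\e}$ you obtain only $2^{-j_1/2}\|P_{j_1}u\|_{S_{j_1}}$, and after weighting by $2^{(n/2+1)j}$ and summing you are short by a full factor $2^{j_1}$ --- the estimate does not close at the critical index. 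The paper resolves this by enlarging the nonlinear space: $\tilde N_j$ in \eqref{no61'} is an infimum over a decomposition into a smoothing piece $N_j$ \emph{and} a Strichartz-dual piece $L^{(2n+4)/(n+4)}_{t,x}$, and the zero-derivative monomials are placed in the latter (Lemma~\ref{nec}, estimate \eqref{el2411}). This Strichartz component is essential, not optional, and it is why the resolution norm $\tilde Y_j$ also carries the $L^{2+4/n}_{t,x}$ piece.

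A second issue is your low-frequency summation. Weighting by $2^{(n/2+1)j}$ and summing in $\ell^1$ over all $j\in\Z$ controls the \emph{homogeneous} norm $\dot B^{n/2+1}_{2,1}$, not the inhomogeneous $B^{n/2+1}_{2,1}$ the theorem requires: $\sum_{j\le 0}2^{(n/2+1)j}\|P_j u\|_{L^2}$ does not bound $\|P_{\le 0}u\|_{L^2}$. The paper's $\tilde Z^{n/2+1}_{2,1}$ in \eqref{no5'} uses an $\ell^2$ structure for $j\le 0$ precisely to capture the inhomogeneous norm and to secure the embedding $\tilde Z^{n/2+1}_{2,1}\subset C(\R;B^{n/2+1}_{2,1})$; the paper remarks explicitly that the lack of a single scaling for the general polynomial $F$ forces this mixed $\ell^2/\ell^1$ design. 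Relatedly, your scattering bound $\|F(u)\|_{L^1_t B^{n/2+1}_{2,1}}\lesssim\delta^m$ is not the norm actually used and is not delivered by the argument you describe. Finally, the H\"older split you propose (two factors in $L^{\infty,2}_\e$, the rest in $L^{2,\infty}_\e$) does not land in $L^{1,2}_\e$; the correct split, used in Lemma~\ref{nec}, is one factor in $L^{\infty,2}_\e$ and the remaining $\kappa+|\nu|-1$ factors in $L^{\kappa+|\nu|-1,\infty}_\e$.
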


By expanding $1/(1+|u|^2)$ into power series, it is easy
to see that the nonlinear term in Eq. \eqref{dnls-2} is a special case of
\eqref{poly}. The result  of Theorem \ref{t3}
contains the equivalent form of the Schr\"odinger  and Heisenberg map
equation \eqref{dnls-2} as a special case if $n\ge 3$.

\begin{corollary}\label{cor-1}
Let $n\ge 3$.  Assume that $u_0 \in
 B^{n/2+1}_{2,1}(\R^n)$ and $\|u_0\|_{
B^{n/2+1}_{2,1}(\R^n)}<\delta$ for some small $\delta>0$.   Then Eq. \eqref{dnls-2} has  a unique solution
 $ u\in  \tilde{Z}_{2,1}^{n/2+1}\subset C(\R;  B^{n/2+1}_{2,1}(\R^n)) $.  Moreover,  the scattering operator carries a whole neighborhood in $B^{n/2+1}_{2,1}(\R^n)$ into $B^{n/2+1}_{2,1}(\R^n)$.
\end{corollary}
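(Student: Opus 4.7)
The plan is to reduce Corollary \ref{cor-1} directly to Theorem \ref{t3}. Since $B^{n/2+1}_{2,1}(\R^n)\hookrightarrow L^\infty(\R^n)$ and the data is small, the solution produced by the fixed-point argument will satisfy $\|u\|_{L^\infty_{t,x}}<1$, so the geometric expansion
\[
\frac{1}{1+|u|^2}=\sum_{k=0}^{\infty}(-1)^k u^k\bar u^k
\]
converges absolutely. Substitution into \eqref{dnls-2} yields
\[
\frac{2\bar u}{1+|u|^2}\sum_{j=1}^n\varepsilon_j(\partial_{x_j}u)^2
=\sum_{k=0}^{\infty}\sum_{j=1}^n 2(-1)^k\varepsilon_j\,u^k\bar u^{k+1}(\partial_{x_j}u)^2,
\]
which is precisely of the template \eqref{poly}: every monomial has total degree $|\beta|=2k+3\ge 3$, so the minimal degree parameter is $m=3$, and all coefficients $c_\beta\in\{0,\pm 2\}$ are uniformly bounded. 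The restriction $n\ge 3$ in the corollary is needed here because the cubic term ($k=0$) forces $m=3$, which is inadmissible for Theorem \ref{t3} in the case $n=2$.

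The main (essentially the only) obstacle is that Theorem \ref{t3} is stated for a single polynomial nonlinearity while the representation above is an infinite power series, so one must check that its proof continues to deliver the same conclusion after summation in $\beta$. The key observation is that the multilinear nonlinear estimate underlying Theorem \ref{t3} controls an arbitrary monomial of multi-index $\beta$ in $(u,\bar u,\nabla u,\nabla\bar u)$ by $C\,\|u\|_{\tilde Z_{2,1}^{n/2+1}}^{|\beta|}$ with a constant $C$ independent of $\beta$ (essentially an algebra-type estimate in the resolution space $\tilde Z_{2,1}^{n/2+1}$). Since the coefficients $c_\beta$ are uniformly bounded, summing the series produces
\[
\sum_{k=0}^{\infty} C\,\|u\|_{\tilde Z_{2,1}^{n/2+1}}^{2k+3}
=\frac{C\,\|u\|_{\tilde Z_{2,1}^{n/2+1}}^{3}}{1-\|u\|_{\tilde Z_{2,1}^{n/2+1}}^{2}},
\]
which converges provided $\|u\|_{\tilde Z_{2,1}^{n/2+1}}<1$, a condition guaranteed by the smallness hypothesis $\|u_0\|_{B^{n/2+1}_{2,1}}<\delta$. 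The same bound, applied to each multilinear difference $u^{\beta_1}\bar u^{\beta_2}\prod(\partial u)^{\cdot}-v^{\beta_1}\bar v^{\beta_2}\prod(\partial v)^{\cdot}$ via telescoping, yields the Lipschitz estimate needed for the contraction mapping argument on a small ball of $\tilde Z_{2,1}^{n/2+1}$. The global existence, uniqueness in $\tilde Z_{2,1}^{n/2+1}$, and the scattering statement from Theorem \ref{t3} then transfer verbatim, which establishes Corollary \ref{cor-1}.
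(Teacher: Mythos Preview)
Your proposal is correct and follows exactly the route the paper takes: the paper simply remarks that expanding $1/(1+|u|^2)$ as a power series casts the nonlinearity of \eqref{dnls-2} in the form \eqref{poly} with $m=3$, so that Theorem~\ref{t3} applies for $n\ge 3$. Your additional care in checking that the multilinear estimate (Lemma~\ref{nec}) sums over the infinite series with a uniform constant is already built into the paper's framework, since \eqref{poly} is stated for $m\le|\beta|<\infty$ and Lemma~\ref{nec} is formulated accordingly.
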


\medskip

\medskip

\medskip

Now we consider the
initial value problem
\begin{equation}\label{dsb}
 (i\partial_t +\Delta_\pm) u
=|u|^{m-1}\vec{\lambda}_1\cdot\nabla u+ |u|^{m-3} u^2 \vec{\lambda}_2 \cdot\nabla \bar{u}, \text{ }
u(0,x)=u_{0}(x),
\end{equation}
where $m\in 2\mathbb{N}+1$,
$\vec{\lambda}_1 $ and $ \vec{\lambda}_2$ are constant vectors.  Taking $m=3$ in  \eqref{dsb}, we get Eq. \eqref{dnls-1}.   The initial value problem \eqref{dsb} is invariant
under the scaling
\begin{equation}\label{eq:scalingb}
 u_\lambda(t,x) = \lambda^{\frac{1}{m-1}} u(\lambda^2 t, \lambda x),\quad u_{\lambda}(0,x)=\lambda^{\frac{1}{m-1}} u_0(\lambda x),
\end{equation}
where $\lambda>0$. Denote
\begin{equation}
\label{cr}s'_m=\frac{n}{2}-\frac{1}{m-1},
\end{equation}
 then $\|u_\lambda(0,x)\|_{\dot B^{s'_m}_{2,1}} \sim \|u(0,x)\|_{\dot B^{s'_m}_{2,1}}$ and the equivalence is independent of $\lambda>0$. From this point of view, we say that $\dot B^{s'_m}_{2,1}$ is the critical space of Eq. \eqref{dsb}.

\begin{theorem}\label{t2}
Let $m$ be an odd integer. Assume that $m\ge 3$ for $n\ge 3$  and $m\ge 5$ for $n=2$.
There exists $\delta>0$  such that for any $u_0 \in
  \dot  B^{s'_{m}}_{2,1}(\R^n)$ with $\|u_0\|_{\dot
B^{s'_{m}}_{2,1}(\R^n)}<\delta$, Eq. \eqref{dsb} has a unique solution
  \begin{equation*}
    u\in \dot Z_{2,1}^{s'_{m}}\subset
    C(\R; \dot B^{s'_{m}}_{2,1}(\R^n)),
  \end{equation*}
  where $\dot Z_{2,1}^{s'_{m}}$ is as in \eqref{no5}.
  Moreover, the scattering operator carries a whole neighborhood in   $\dot
B^{s'_{m}}_{2,1}(\R^n)$ into   $\dot
B^{s'_{m}}_{2,1}(\R^n)$.
\end{theorem}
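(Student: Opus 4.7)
The plan is to solve \eqref{dsb} via its integral (Duhamel) form
$$u(t) = W_\pm(t) u_0 - i \int_0^t W_\pm(t-s)\, F\bigl(u,\bar u,\nabla u,\nabla \bar u\bigr)(s)\, ds$$
by a contraction-mapping argument in a small closed ball of $\dot{Z}_{2,1}^{s'_m}$ centered at the linear evolution $W_\pm(\cdot)u_0$. Because $s'_m$ is the scale-invariant regularity for \eqref{dsb}, the resolution space must be homogeneous and critical, and it must simultaneously control the four families of norms that the nonlinearity will call on: an energy-type component $L^\infty_t \dot{B}^{s'_m}_{2,1}$; a Kenig--Ponce--Vega local smoothing component of the form $\|D^{1/2}P_j u\|_{L^{\infty,2}_{\e}}$, designed to absorb the single derivative falling on $u$ or $\bar u$; a maximal-function component $\|P_j u\|_{L^{2,\infty}_{\e}}$, which will hold the $m-1$ pointwise factors in $|u|^{m-1}$; and Strichartz components. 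Each piece is weighted by $2^{s'_m j}$, summed in $j\in\Z$ with $\ell^1$, and taken uniformly in $\e\in \mathbb{S}^{n-1}$.

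First I would establish the linear bounds on $W_\pm(\cdot)\phi$ in each component of $\dot{Z}_{2,1}^{s'_m}$. The direction-dependent smoothing and maximal-function estimates are those of Kenig--Ponce--Vega adapted to the non-elliptic symbol $|\xi|^2_\pm$ via the rotation \eqref{A} together with the derivative identity \eqref{s21}: for any $\e\in \mathbb{S}^{n-1}$ the phase $|A^{-1}\xi|^2_\pm$ has non-vanishing derivative in $\xi_1$ of size $\sim 2^k$ on $\{\xi\cdot \e_\pm \sim 2^k\}$, yielding the $D^{1/2}$ smoothing gain uniformly in $\e$. The endpoint Strichartz pair $L^2_tL^{2n/(n-2)}_x$ is available for $n\ge 3$; for $n=2$ only non-endpoint pairs are usable, which is why the hypothesis on $m$ is strengthened in that case. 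All of these linear bounds lift to $\dot{Z}_{2,1}^{s'_m}$ by frequency localization and $\ell^1$-summation in $j$.

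The heart of the proof is the multilinear estimate
$$\Bigl\| |u|^{m-1}\vec{\lambda}_1\cdot\nabla u + |u|^{m-3} u^2 \vec{\lambda}_2\cdot\nabla \bar u \Bigr\|_{N} \lesssim \|u\|_{\dot{Z}_{2,1}^{s'_m}}^m,$$
where $N$ is the source norm for which Duhamel maps continuously into $\dot{Z}_{2,1}^{s'_m}$. After Littlewood--Paley decomposing each of the $m$ factors, isolating the block of highest frequency, and selecting the smoothing direction $\e$ adapted to that block's Fourier support, I would distribute the norms by H\"older so that: the gradient-carrying factor sits in local smoothing, where a full derivative is paid for by the $D^{1/2}$ gain plus the $D^{1/2}$ implicit in the duality with $L^{1,2}_{\e}$; one additional factor sits in an $L^2_{t,x}$-type smoothing-dual norm; the remaining $m-2$ factors sit in maximal-function spaces $L^{2,\infty}_{\e}$, with Strichartz norms invoked for the very-low-frequency factors. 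A bookkeeping of the Besov exponents against the scaling \eqref{eq:scalingb} shows that the total regularity budget matches $m\cdot s'_m$ exactly because $s'_m=n/2-1/(m-1)$, and the H\"older exponents in time and in the transverse variables close provided $m\ge 3$ for $n\ge 3$ or $m\ge 5$ for $n=2$, which is precisely the stated threshold.

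The main obstacle is this multilinear estimate at the critical scale, where no logarithmic loss is affordable: the smoothing direction $\e$ must be chosen block-by-block, the low-high and high-high pieces must reassemble with the correct $2^{s'_m j}$ weight, and summability must be kept in $\ell^1$ rather than $\ell^2$---this is exactly why the theorem is stated in $\dot{B}^{s'_m}_{2,1}$ rather than $\dot{H}^{s'_m}$. Once this estimate is in place, Picard iteration in a ball of radius $\sim \|u_0\|_{\dot{B}^{s'_m}_{2,1}}$ closes, and the same estimate applied to differences yields uniqueness and Lipschitz dependence on the datum. Finally, as a by-product of the estimate the nonlinearity lies in $L^1_t\dot{B}^{s'_m}_{2,1}$, so the Duhamel integral converges in $\dot{B}^{s'_m}_{2,1}$ as $t\to\pm\infty$; the asymptotic states $u_\pm$ exist and the scattering operator $u_-\mapsto u_+$ is well defined and Lipschitz on a whole $\dot{B}^{s'_m}_{2,1}$-neighborhood of the origin.
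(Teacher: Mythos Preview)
Your overall strategy matches the paper's: contraction mapping in a critical resolution space built from directional local smoothing and maximal-function components, a multilinear estimate placing the nonlinearity in the dual smoothing space $L^{1,2}_\e$, and scattering as a by-product. However, the specific H\"older allocation you describe does not close.

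You propose putting the gradient factor in $L^{\infty,2}_\e$, one additional factor in an ``$L^2_{t,x}$-type'' norm, and the remaining $m-2$ factors in $L^{2,\infty}_\e$. Against the target $L^{1,2}_\e$, the $\e$-direction exponents sum to $0+\tfrac12+(m-2)\cdot\tfrac12=\tfrac{m-1}{2}$, which equals $1$ only for $m=3$; and the transverse/time exponents sum to $\tfrac12+\tfrac12+0=1\neq\tfrac12$, so the second index fails for every $m$. The paper's resolution space instead carries the $m$-dependent maximal-function component $M^m_j$ measured in $L^{m-1,\infty}_\e$ (see \eqref{y}); the multilinear estimate (Lemma~\ref{neb}) then places the highest-frequency factor in $L^{\infty,2}_\e$ and \emph{all} $m-1$ remaining factors in $L^{m-1,\infty}_\e$, giving exactly $0+\tfrac{m-1}{m-1}=1$ and $\tfrac12+0=\tfrac12$. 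No Strichartz component enters this argument; the $Y^m_j$ norm consists only of the smoothing, maximal, and energy pieces.

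Your explanation for the $n=2$ restriction is also not the operative one. The constraint comes from the maximal-function estimate (Lemma~\ref{l2}), which requires $np\ge 6$; with $p=m-1$ this forces $n(m-1)\ge 6$, hence $m\ge 4$ when $n=2$, and since $m$ is assumed odd, $m\ge 5$. The endpoint-versus-nonendpoint Strichartz distinction plays no role in this theorem.
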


\medskip
Finally, we consider the
nonlinearity with full derivative terms
\begin{equation}\label{dsa}
 (i\partial_t +\Delta_\pm) u
=F(\nabla u, \nabla \bar{u}), \quad u(0,x)=u_{0}(x),
\end{equation}
where $F: \mathbb{C}^{2n}\rightarrow \mathbb{C}$ is a homogeneous
polynomial of degree $m$, for example, $F(\nabla u, \nabla \bar{u})=(\partial_{x_1} u)^{m-\ell} (\partial_{x_2} u)^{\ell}$.  It is
easy to see that equation \eqref{dsa} is invariant under the scaling
  \begin{equation}\label{eq:scaling}
    u_\lambda(t,x) = \lambda^{-1+\frac{1}{m-1}} u(\lambda^2 t, \lambda x),
    \quad u_{\lambda}(0,x)=\lambda^{-1+\frac{1}{m-1}} u_0(\lambda x),
  \end{equation}
where $\lambda>0$. Denote
   \begin{equation}
       \label{cr}s_m=\frac{n}{2}+\frac{m-2}{m-1},
   \end{equation}
then $\|u_\lambda(0,x)\|_{\dot B^{s_m}_{2,1}} \sim  \|u(0,x)\|_{\dot
B^{s_m}_{2,1}}$, thus $s_m$ is also referred to as a critical index.

\begin{theorem}\label{t1}
 Assume $m\geq 3$ for $n\ge 3$, and $m\ge 4$ for $n=2$. There exists $\delta>0$, such that for any $u_0 \in
  \dot B^{s_{m}}_{2,1}(\R^n)$ with $\|u_0\|_{\dot
     B^{s_{m}}_{2,1}(\R^n)}<\delta$, Eq. \eqref{dsa} has a unique solution
  \begin{equation*}
    u\in \dot Z_{2,1}^{s_{m}}\subset
    C(\R; \dot B^{s_{m}}_{2,1}(\R^n)),
  \end{equation*}
where $\dot Z_{2,1}^{s_{m}}$ is as in \eqref{no5}.  Moreover,   the scattering operator carries a whole neighborhood in   $\dot
B^{s_{m}}_{2,1}(\R^n)$ into   $\dot
B^{s_{m}}_{2,1}(\R^n)$.
\end{theorem}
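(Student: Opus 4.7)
The plan is to run a Picard iteration on the Duhamel formula
$$u(t) = W_\pm(t) u_0 - i\int_0^t W_\pm(t-s)\, F(\nabla u, \nabla \bar u)(s)\,ds$$
inside the resolution space $\dot Z^{s_m}_{2,1}$. Since the scaling \eqref{eq:scaling} is critical exactly at regularity $s_m$, every estimate must be performed frequency-localized and reassembled with the $\ell^1(\Z)$-summation built into $\dot B^{s_m}_{2,1}$. The first step is the set of linear estimates for $W_\pm(t)$ in each component norm of $\dot Z^{s_m}_{2,1}$: (a) Strichartz bounds for $P_j W_\pm(t)\phi$; (b) Kenig--Ponce--Vega local smoothing gains $\|D_\e^{1/2} P_j W_\pm(t)\phi\|_{L^{\infty,2}_\e}$, available for every direction $\e\in\s^{n-1}$ thanks to the derivative identity \eqref{s21}; (c) space-local $L^2$ maximal estimates; and (d) anisotropic $Q^\e_{k,l}$-weighted $X^{s,b}$-type bounds. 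The inhomogeneous analogues follow by the Christ--Kiselev lemma or direct kernel arguments, exactly as in the proofs of Theorems \ref{t3} and \ref{t2}.

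The heart of the argument is the multilinear estimate for $F(\nabla u, \nabla \bar u)$ in $\dot Z^{s_m}_{2,1}$. Writing $F$ as a sum of monomials of degree $m$ and Littlewood--Paley decomposing each factor, I would localize the highest-frequency factor $P_{j_1}\nabla u$, and choose $\e$ so that the derivative on that factor lines up with the smoothing direction of $W_\pm(t) P_{j_1}$ via \eqref{s21}. The $1/2$-derivative smoothing gain on the high-frequency factor, together with a total budget of $(m-2)/(m-1)$ derivatives distributed over the remaining $m-1$ factors via maximal function and Strichartz norms, exactly covers the cost of the $m$ gradients appearing in $F$. Summing the frequency-localized estimates in $j$ uses precisely the scaling-critical exponent $s_m=n/2+(m-2)/(m-1)$, and this is where the dimension/degree thresholds enter: convergence of the dyadic sums requires the Strichartz and maximal exponents to remain admissible, which forces $n\ge 3$ when $m=3$, and $m\ge 4$ when $n=2$.

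Once the multilinear estimate is in hand, the standard contraction argument on a small ball of $\dot Z^{s_m}_{2,1}$ yields a unique global solution, and $u\in C(\R; \dot B^{s_m}_{2,1}(\R^n))$ follows from the embedding already built into $\dot Z^{s_m}_{2,1}$. Scattering is then automatic: the global bound on $\|F(\nabla u, \nabla \bar u)\|$ in the dual resolution space makes $W_\pm(-t)u(t)$ Cauchy in $\dot B^{s_m}_{2,1}$ as $t\to\pm\infty$, producing the wave operators on a neighborhood of $0$ as required.

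The main obstacle, as in Theorems \ref{t3} and \ref{t2}, is the non-elliptic case: the curvature of the symbol $|\xi|^2_\pm$ degenerates along the null cone of $\e_\pm$, so the local smoothing inequality gains only a half-derivative in a single direction $\e$ rather than isotropically. The $Q^\e_{k,l}$ cutoffs together with \eqref{s21} are the technical workhorse that lets one pick the smoothing direction adapted to each high-frequency factor, making the non-elliptic bilinear and multilinear estimates essentially identical in form to their elliptic counterparts. The residual difficulty is purely combinatorial: one must keep track of how the chosen $\e$ interacts with the other factors, which do not share the same preferred direction, and this is handled by an $\ell^1$-bookkeeping argument that piggybacks on the Besov summation and on the scaling invariance at $s_m$.
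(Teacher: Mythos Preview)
Your outline is correct and matches the paper's approach: the linear smoothing and maximal bounds feed into the multilinear estimate of Lemma~\ref{ne}, after which contraction in $\dot Z^{s_m}_{2,1}$ and the Cauchy property of $W_\pm(-t)u(t)$ give well-posedness and scattering exactly as in Section~\ref{sect:proof_main}. One small inaccuracy to flag: the resolution space here involves no $X^{s,b}$-type norms whatsoever---only the physical-space mixed Lebesgue components $M^m_j$, $S_j$, $T_j$ of \eqref{y}---and the maximal estimates used are the global-in-time $L^{p,\infty}_{\mathbf{e}}$ bounds of Lemma~\ref{l2}, not space-local ones.
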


\medskip

\medskip

\medskip

The rest of the paper is organized as follows. In Section
\ref{sect:lin esti}, we deduce the $L^{p,\infty}_{\mathbf{e}}$,
$L^{\infty,2}_{\mathbf{e}}$ type estimates for the solutions of the linear Schr\"odinger equation. In section \ref{sect:homo}, we construct the
resolution spaces and prove some nonlinear estimates to deal with IVP \eqref{dsb} and \eqref{dsa}. In Section \ref{sect:gener}, we consider the IVP \eqref{ds} with general nonlinearity. In section \ref{sect:proof_main}, we prove the main
results. In the last
section, we give the rotated Christ-Kiselev Lemma
for anisotropic Lebesgue spaces.

\medskip

\section{Linear estimates}\label{sect:lin esti}

Recalling that in order to solve the Schr\"odinger map, Ionescu and Kenig \cite{IK} used the following type estimate
 \begin{align}\label{se}
\left\| P_j Q_{j,10}^{\e} e^{it\Delta}\phi \right\|_{L^{\infty,2}_\e} \lesssim 2^{-j/2}\|\phi\|_{2}£¬
 \end{align}
which is actually implied by
\begin{align}\label{ic}
\left\| D^{1/2}_{x_1} e^{it\Delta} \phi   \right\|_{L^\infty_{x_1}
L^2_{\bar{x}}L^2_t } \lesssim \|\phi\|_{2}
\end{align}
where $x=(x_1,\bar{x})$, which was first used by Linares and Ponce
\cite{Li-Po93} to study the local well-posedness of the
Davey-Stewartson system. Indeed, after a spatial rotation,
\eqref{ic} implies that \eqref{se} holds. Even though \eqref{ic}
also holds for the non-elliptic case and it is a straightforward
consequence of the local smoothing effect in one spatial dimension
(cf. \cite{KPV1, Li-Po93, whh}), \eqref{se} in the non-elliptic case
is not true, since $e^{it\Delta_\pm}$ is not invariant under the
spatial rotation. In this paper, we show the following smoothing
effect estimates  by partially using the idea of Kenig, Ponce and
Vega \cite{KPV1} in one spatial dimension:
\begin{align}
\left\|D_{\e_\pm}^{1/2}e^{it\Delta_\pm}\phi\right\|_{L^{\infty,2}_{\mathbf{e}}}&\leq
C\|\phi\|_{L^{2}},\label{ia}\\
\Big\|D_{\e_\pm} \int_0^te^{i(t-s)\Delta_\pm}P_{j}F(s)\Big\|_{L^{\infty,2}_{\e}}&\leq
C\|F\|_{L^{1,2}_\e}\label{ib},
\end{align}
where $\e_{\pm}$ is defined in \eqref{pme}, $D^{1/2}_{\e}$ and $D_{\e}$ are  defined
by the symbols $|\xi\cdot \e|^{1/2}$ and $\xi\cdot \e$, respectively.

\subsection{Smoothing estimate and Maximal Function estimate}

In this subsection we shall prove the
smoothing estimate and maximal function estimate, both of which are sharp up to scaling and global in time.

\begin{lemma}[Smoothing effect]\label{l3}
For $\phi\in L^{2}(\mathbb{R}^{n})$ and $n\geq 2$, then
\begin{eqnarray}\label{l31}
\left\|D_{\e_\pm}^{1/2}W_\pm(t)\phi\right\|_{L^{\infty,2}_{\mathbf{e}}}\leq
C\|\phi\|_{L^{2}},
\end{eqnarray}
where $D_{\e_\pm}^{1/2}$ is defined by Fourier multiplier $|\xi\cdot
\e_\pm|^{1/2}$, and $\e_\pm\in\s^{n-1}$ is defined in \eqref{pme}.
\end{lemma}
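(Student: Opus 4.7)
The proof adapts the one-dimensional local smoothing argument of Kenig, Ponce and Vega \cite{KPV1} to the non-elliptic multidimensional setting, the key ingredient being the derivative identity \eqref{s21}. I would parametrize $x=r\e+v$ with $r\in\R$, $v\in P_\e$, and decompose the frequency variable as $\xi=s\e+\eta$ with $s\in\R$, $\eta\in P_\e$; both are unitary changes of coordinates, so $\|\phi\|_{L^2}=\|\hat\phi\|_{L^2_{s,\eta}}$ and $\|\,\cdot\,\|_{L^{\infty,2}_\e}=\|\,\cdot\,\|_{L^\infty_r L^2_{v,t}}$. In these coordinates
\begin{equation*}
D^{1/2}_{\e_\pm}W_\pm(t)\phi(r\e+v)=c\iint e^{i(rs+v\cdot\eta-tQ_\eta(s))}\,|(s\e+\eta)\cdot\e_\pm|^{1/2}\hat\phi(s\e+\eta)\,ds\,d\eta,
\end{equation*}
where $Q_\eta(s):=|s\e+\eta|^2_\pm=s^2|\e|^2_\pm+2s\,\e_\pm\!\cdot\eta+|\eta|^2_\pm$ and \eqref{s21} yields the crucial identity $Q'_\eta(s)=2(s\e+\eta)\cdot\e_\pm$.

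Fix $r$. Applying Plancherel in $v$ identifies $L^2_v$ with $L^2_\eta$, and for each $\eta$ the $t$-Fourier transform concentrates the remaining $s$-integral on the characteristic set $\{\tau+Q_\eta(s)=0\}$. I would split the $s$-line into the (at most two) intervals of monotonicity of the quadratic $Q_\eta$; on each, $s$ is a smooth function of $\sigma:=Q_\eta(s)$ with $|ds/d\sigma|=1/|Q'_\eta(s)|$. Plancherel in $t$, together with a Cauchy--Schwarz to combine the branches, then yields
\begin{equation*}
\int|D^{1/2}_{\e_\pm}W_\pm(t)\phi(r\e+v)|^2\,dv\,dt \lesssim \iint \frac{|(s\e+\eta)\cdot\e_\pm|\,|\hat\phi(s\e+\eta)|^2}{|Q'_\eta(s)|}\,ds\,d\eta.
\end{equation*}
Because $|Q'_\eta(s)|=2|(s\e+\eta)\cdot\e_\pm|$, the smoothing weight cancels the Jacobian exactly, so the right-hand side is controlled by $\|\hat\phi\|_{L^2}^2=\|\phi\|_{L^2}^2$. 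The bound is independent of $r$, so taking $\sup_r$ gives \eqref{l31}.

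The main technical obstacle is making the cancellation robust in the non-elliptic case, where $|\cdot|^2_\pm$ has indefinite signature and the level sets of $Q_\eta$ may degenerate: if $|\e|^2_\pm=0$ then $Q_\eta$ is only linear in $s$, while if $|\e|^2_\pm\neq 0$ one crosses the vertex of a parabola where $Q'_\eta$ vanishes. The identity \eqref{s21} resolves all these cases uniformly, since the critical points of $Q_\eta$ coincide exactly with the zero set of $(s\e+\eta)\cdot\e_\pm$, and the smoothing weight vanishes there to matching order. Consequently the proof needs no separate treatment of elliptic versus non-elliptic regimes, nor of generic versus degenerate $\eta$, and sharpness up to constants follows from the fact that the final inequality is saturated pointwise in $(s,\eta)$.
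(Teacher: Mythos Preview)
Your proof is correct and follows essentially the same route as the paper's: reduce via Plancherel in the transverse directions to a one-dimensional smoothing estimate for each fixed $\eta$ (the paper's $\bar\xi$), then change variables $\sigma=Q_\eta(s)$ and use the derivative identity \eqref{s21} to cancel the Jacobian against the smoothing weight $|(s\e+\eta)\cdot\e_\pm|$. The only cosmetic differences are that you work in intrinsic coordinates $\xi=s\e+\eta$ while the paper first rotates $\e$ to $\e_0$ via the matrix $A$, and you are slightly more explicit about splitting into the two monotonicity branches of $Q_\eta$.
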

\begin{proof}[{Proof of Lemma \ref{l3}}] By the definition, we have
\begin{align}\label{l32}
D_{\e_\pm}^{1/2}W_\pm(t)\phi=&\int_{\mathbb{R}^n}e^{ix\cdot
\xi}e^{it|\xi|^{2}_\pm}|\xi\cdot
\e_\pm|^{1/2}\widehat{\phi}(\xi)\,d\xi\nonumber\\
:=&\int_{\mathbb{R}^n}e^{ix\cdot
\xi}e^{it\Psi(\xi)}a(\xi)\widehat{\phi}(\xi)\,d\xi
\end{align}
where we denote $\Psi(\xi)=|\xi|^{2}_\pm$, and $a(\xi)=|\xi\cdot
\e_\pm|^{1/2}$. In view of \eqref{l32}, \eqref{A1} and $A^{-1}=A^t$, we have
\begin{align}\label{l33}
\Big\|D_{\e_\pm}^{1/2}W_\pm(t)\phi(x)\Big\|_{L^{\infty,2}_{\mathbf{e}}}
=&\Big\|\int_{\mathbb{R}^n}e^{ix\cdot
\xi}e^{it\Psi(\xi)}a(\xi)\widehat{\phi}(\xi)\,d\xi\Big\|_{L^{\infty,2}_{\mathbf{e}}}\nonumber\\
=&\Big\|\int_{\mathbb{R}^n}e^{i(A^{-1}x)\cdot
\xi}e^{it\Psi(\xi)}a(\xi)\widehat{\phi}(\xi)\,d\xi\Big\|_{L^{\infty}_{x_1}L^{2}_{\bar{x},t}}\nonumber\\
=&\Big\|\int_{\mathbb{R}^n}e^{ix\cdot
A\xi}e^{it\Psi(\xi)}a(\xi)\widehat{\phi}(\xi)\,d\xi\Big\|_{L^{\infty}_{x_1}L^{2}_{\bar{x},t}}\nonumber\\
=&\Big\|\int_{\mathbb{R}^n}e^{ix\cdot
\xi}e^{it\Psi_1(\xi)}a_1(\xi)\widehat{\phi}_1(\xi)\,d\xi\Big\|_{L^{\infty}_{x_1}L^{2}_{\bar{x},t}}
\end{align}
where $\Psi_1(\xi)=\Psi(A^{-1}\xi), a_1(\xi)=a(A^{-1}\xi)$ and
$\widehat{\phi}_1(\xi)=\widehat{\phi}(A^{-1}\xi)$, then apply
Plancherel theorem to \eqref{l33} in $\bar{x}$ variables and continue with
\begin{align}\label{l34}
=&\Big\|\int_{\mathbb{R}}e^{ix_1
\xi_1}e^{it\Psi_1(\xi)}a_1(\xi)\widehat{\phi}_1(\xi)\,d\xi_1\Big\|_{L^{\infty}_{x_1}L^{2}_{t}L^{2}_{\bar{\xi}}}\nonumber\\
\leq&\Big\|\int_{\mathbb{R}}e^{ix_1
\xi_1}e^{it\Psi_1(\xi)}a_1(\xi)\widehat{\phi}_1(\xi)\,d\xi_1\Big\|_{L^{2}_{\bar{\xi}}L^{\infty}_{x_1}L^{2}_{t}}.
\end{align}
Then for \eqref{l31}, it is sufficient to show for any $\bar{\xi}\in \R^{n-1}$,
\begin{eqnarray}\label{l34}
\Big\|\int_{\mathbb{R}}e^{ix_1
\xi_1}e^{it\Psi_1(\xi)}a_1(\xi)\widehat{\phi}_1(\xi)\,d\xi_1\Big\|_{L^{\infty}_{x_1}L^{2}_{t}}\leq C
\|\widehat{\phi}_1(\xi)\|_{L^2_{\xi_1}} .
\end{eqnarray}
Fix $\bar{\xi}$ to be constant, performing the change of variable
$\eta=\Psi_1(\xi_1,\bar{\xi})$, using Plancherel's identity in the
$t$-variable, then returning to the original variable
$\xi_1=\theta(\eta)$, it follows that
\begin{align}\label{l35}
&\Big\|\int_{\mathbb{R}}e^{ix_1
\xi_1}e^{it\Psi_1(\xi)}a_1(\xi)\widehat{\phi}_1(\xi)\,d\xi_1\Big\|^2_{L^{2}_{t}}\nonumber\\
=&\int\Big|\int_{\mathbb{R}}e^{ix_1
\theta(\eta)}e^{it\eta}a_1(\xi)\widehat{\phi}_1(\xi)\theta'(\eta)\,d\eta\Big|^2dt\nonumber\\
=&\int\Big|a_1(\xi)\widehat{\phi}_1(\xi)\theta'(\eta)\Big|^2\,d\eta=
\int\Big|a_1(\xi)\widehat{\phi}_1(\xi)|\theta'(\eta)|^{1/2}\Big|^2\,d\xi_1,
\end{align}
now it suffices to show $a_1(\xi_1,\bar{\xi})|\theta'(\eta)|^{\frac{1}{2}}=1$, which is
equivalent to
$|\partial_{\xi_1}\Psi_1(\xi_1,\bar{\xi})|^{1/2}=a_1(\xi_1,\bar{\xi})$, by
the definition of $\Psi_1$, it is sufficient to show
\begin{eqnarray}\label{l36}
|\partial_{\xi_1}\Psi(A^{-1}\xi)|=2(a_1(\xi))^2
\end{eqnarray}
which is exactly implied by \eqref{s21} since
$a_1(\xi)=|A^{-1}\xi\cdot \e_\pm|^{1/2}$.
\end{proof}

\medskip

We will need the following frequency-localized form of Lemma \ref{l3},
\begin{corollary}\label{cl3}
For $\phi\in L^{2}(\mathbb{R}^{n})$, $j\in \Z$ and $n\geq 2$, then
\begin{eqnarray}\label{cl31}
\left\|P_{j}Q^{\mathbf{e}_\pm}_{j,10}W_\pm(t)\phi\right\|_{L^{\infty,2}_{\mathbf{e}}}\leq
C2^{-j/2}\|\phi\|_{L^{2}},
\end{eqnarray}
where $\e_\pm$ is defined in \eqref{pme}, and the operator $Q^{\mathbf{e}}_{j,10}$ is defined in
\eqref{q}.
\end{corollary}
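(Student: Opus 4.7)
My plan is to derive Corollary \ref{cl3} as an immediate consequence of Lemma \ref{l3}, by inserting and then removing the weight $|\xi\cdot\e_\pm|^{1/2}$ on the frequency support specified by $P_j Q^{\e_\pm}_{j,10}$.

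First, I would introduce the Fourier multiplier $T_\mu$ with symbol
\[
\mu(\xi) \;=\; \psi_j(\xi)\,\psi_{\ge j-10}(\xi\cdot\e_\pm)\,|\xi\cdot\e_\pm|^{-1/2},
\]
and observe that, at the level of symbols, one has the exact identity
\[
P_j Q^{\e_\pm}_{j,10}\,W_\pm(t)\phi \;=\; T_\mu\bigl(D_{\e_\pm}^{1/2}W_\pm(t)\phi\bigr).
\]
By Lemma \ref{l3} the inner factor is bounded by $\|\phi\|_{L^2}$ in $L^{\infty,2}_\e$, so the whole task reduces to the operator bound
\[
\|T_\mu f\|_{L^{\infty,2}_\e} \;\lesssim\; 2^{-j/2}\,\|f\|_{L^{\infty,2}_\e}.
\]

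To establish this bound I would rescale $\mu(\xi)=2^{-j/2}\tilde\mu(2^{-j}\xi)$ with $\tilde\mu(\eta)=\psi(\eta)\,\psi_{\ge -10}(\eta\cdot\e_\pm)\,|\eta\cdot\e_\pm|^{-1/2}$. The annular localization from $\psi(\eta)$ together with the directional localization from $\psi_{\ge -10}(\eta\cdot\e_\pm)$ confine $\eta$ to a compact set that stays away from the hyperplane $\eta\cdot\e_\pm=0$, so $\tilde\mu$ is smooth and compactly supported, and its inverse Fourier transform lies in $\mathcal{S}(\R^n)$. A standard scaling argument then shows that the convolution kernel of $T_\mu$ has $L^1(\R^n)$-norm $\lesssim 2^{-j/2}$. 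Passing to rotated coordinates via \eqref{A1} so that $L^{\infty,2}_\e$ becomes $L^\infty_{x_1}L^2_{\bar x,t}$, the operator $T_\mu$ acts as convolution in the $x$ variable only, and Minkowski's inequality converts the kernel bound into the required operator bound.

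There is no serious obstacle here: the only substantive point is to verify that the singular factor $|\eta\cdot\e_\pm|^{-1/2}$ is tamed by the directional cutoff $\psi_{\ge -10}(\eta\cdot\e_\pm)$, which is forced by the support considerations above and is entirely routine. Combining the operator bound with Lemma \ref{l3} then yields \eqref{cl31}.
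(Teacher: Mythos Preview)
Your proposal is correct and follows the same approach as the paper: both deduce \eqref{cl31} from Lemma~\ref{l3} by using that on the support of $P_jQ^{\e_\pm}_{j,10}$ one has $|\xi\cdot\e_\pm|\sim 2^j$, so that $D_{\e_\pm}^{1/2}\sim 2^{j/2}$. The paper leaves this as a one-line heuristic, whereas you make it precise via the multiplier $T_\mu$ and an $L^1$ kernel bound; this added rigor is welcome but does not constitute a different method.
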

\begin{proof}[{Proof of Lemma \ref{cl3}}]
Under the phrase cut-offs $P_{j}Q^{\mathbf{e}_\pm}_{j,10}$, we have the
approximation $D_{\e_\pm}\thicksim 2^{j}$, thus \eqref{cl31} follows directly from \eqref{l31}.
\end{proof}

\medskip

\medskip

Now we give the dyadic maximal function estimate, which generalize Lemma 3.3 in \cite{IK}.

\begin{lemma}[Maximal Function Estimate]\label{l2}
For $\phi\in L^{2}(\mathbb{R}^{n})$, $n\geq2$, $p\ge 2$ and $np\ge 6$, then we have
\begin{eqnarray}\label{l21}
2^{-(\frac{n}{2}-\frac{1}{p})j}\Big\|P_{j}W_\pm(t)\phi\Big\|_{L^{p,\infty}_{\mathbf{e}}}\le C \|\phi\|_{L^{2}},
\end{eqnarray}
where the constant $C$ is independent on $n$, $p$ and $j$.
\end{lemma}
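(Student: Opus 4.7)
The plan is to reduce \eqref{l21} to a scaling-invariant frequency-$1$ estimate and then prove that estimate by a $TT^*$ argument combined with a pointwise dispersive bound on the Schr\"odinger kernel. First, using the parabolic scaling $\phi(x) = \psi(2^j x)$ (so that $\widehat\psi$ is supported in $\{|\xi|\sim 1\}$ when $\widehat\phi$ is supported in $\{|\xi|\sim 2^j\}$), a direct change of variables in the definition \eqref{pq} gives $\|P_j W_\pm(t)\phi\|_{L^{p,\infty}_\e} = 2^{-j/p}\|W_\pm(s)\psi\|_{L^{p,\infty}_\e}$ and $\|\phi\|_{L^2} = 2^{-jn/2}\|\psi\|_{L^2}$, so \eqref{l21} is equivalent to the scale-$1$ inequality $\|W_\pm(t)\psi\|_{L^{p,\infty}_\e}\lesssim \|\psi\|_{L^2}$ for all $\psi$ with Fourier support in $\{|\xi|\sim 1\}$. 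Using \eqref{A1} with the rotation $A$ from \eqref{A}, this is recast as $\|\tilde W(t)\tilde\psi\|_{L^p_{x_1}L^\infty_{\bar x, t}}\lesssim \|\tilde\psi\|_{L^2}$, where $\tilde\psi(y) = \psi(A^{-1}y)$ has the same $L^2$-norm as $\psi$ and $\tilde W(t)$ is the Schr\"odinger-type semigroup with Fourier symbol $e^{-it|A^T\xi|^2_\pm}$. The point is that $|A^T\xi|^2_\pm$ remains a non-degenerate quadratic form of the same signature as $|\xi|^2_\pm$, which is all the subsequent stationary-phase analysis needs.

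The next step is the $TT^*$ reformulation: the above inequality is equivalent to
\[
\|K*F\|_{L^p_{x_1}L^\infty_{\bar x, t}}\lesssim \|F\|_{L^{p'}_{x_1}L^1_{\bar x, t}},
\]
where the convolution is over space-time and $K_t(x) = \int_{\R^n} e^{ix\cdot\xi + it|A^T\xi|^2_\pm}\chi(\xi)\,d\xi$ for a compactly supported bump $\chi$ on $\{|\xi|\sim 1\}$. Stationary phase applied to this oscillatory integral, using that the Hessian of the phase $|A^T\xi|^2_\pm$ is a constant invertible matrix, gives the dispersive bound $|K_t(x)|\lesssim (1+|t|)^{-n/2}$ for $|x|\lesssim 1+|t|$ together with rapid decay outside this light-cone region. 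Consequently the one-dimensional profile
\[
g(y_1)\,:=\,\sup_{s\in\R,\,\bar z\in\R^{n-1}}|K_s(y_1, \bar z)|
\]
satisfies $g(y_1)\lesssim (1+|y_1|)^{-n/2}$.

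To conclude, I decouple the $x_1$ direction from $(\bar x, t)$: at each fixed pair $(x_1, y_1)$, viewing $K_{t-s}(x_1 - y_1, \bar x - \bar y)F(s, y_1, \bar y)$ as a convolution in $(\bar y, s)$ and applying the trivial $L^1 \cdot L^\infty$ inequality yields
\[
\sup_{t, \bar x}|(K*F)(t, x_1, \bar x)|\leq \int_\R g(x_1 - y_1)\,\|F(\cdot, y_1, \cdot)\|_{L^1_{\bar y, s}}\,dy_1.
\]
Young's convolution inequality in $x_1$ with exponents $(p/2, p', p)$ then produces $\|K*F\|_{L^p_{x_1}L^\infty_{\bar x, t}}\lesssim \|g\|_{L^{p/2}(\R)}\,\|F\|_{L^{p'}_{x_1}L^1_{\bar x, t}}$, and the hypothesis $np\geq 6$ ensures $np/4 > 1$, which is precisely the condition for $\|g\|_{L^{p/2}(\R)} < \infty$. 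The main obstacle is establishing the pointwise kernel bound uniformly in the non-elliptic case, where $|A^T\xi|^2_\pm$ is indefinite; however, the non-degeneracy of the Hessian lets the standard stationary-phase lemma deliver the same $|t|^{-n/2}$ decay as in the elliptic case, after which the rest is a routine mixed-norm Young estimate.
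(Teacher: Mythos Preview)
Your proof is correct and follows essentially the same route as the paper: rotation via \eqref{A1} to pass to $L^{p}_{x_1}L^\infty_{\bar x,t}$, a $TT^*$ reduction to a kernel bound, and then the combination of the trivial estimate, the stationary-phase dispersive decay $|t|^{-n/2}$, and the non-stationary (integration-by-parts) decay when $|x_1|\gg 1+|t|$, yielding $g(y_1)\lesssim(1+|y_1|)^{-n/2}$ and hence $g\in L^{p/2}(\mathbb{R})$ under $np\ge 6$. The only cosmetic difference is that you first rescale to $j=0$, whereas the paper keeps the dyadic parameter $j$ throughout and verifies the $L^{p/2}_{x_1}$ bound on the kernel directly; the two computations are equivalent.
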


\begin{proof}[{Proof of Lemma \ref{l2}}]From the definition, it is sufficient to show
\[\left\|\int_{\mathbb{R}^{n}}e^{
ix \cdot\xi}e^{it|\xi|^{2}_\pm}\psi_{j}(\xi)\widehat{\phi}(\xi)\,
d\xi\right\|_{L^{p,\infty}_{\mathbf{e}}}\lesssim
2^{(\frac{n}{2}-\frac{1}{p})j}\|\phi\|_{L^{2}}.\]
In view of \eqref{A1},
 it suffices to prove
\begin{eqnarray}\label{l22}
\left\|\int_{\mathbb{R}^{n}}e^{
iA^{-1}x \cdot\xi}e^{it|\xi|^{2}_\pm}\psi_{j}(\xi)\widehat{\phi}(\xi)\,
d\xi\right\|_{L^{p}_{x_{1}}L^{\infty}_{\bar{x},t}}\lesssim
2^{(\frac{n}{2}-\frac{1}{p})j}\|\phi\|_{L^{2}},
\end{eqnarray}
where $\bar{x},\bar{\xi}\in \R^{n-1}$ satisfy $\xi=
(\xi_{1}, \bar{\xi})$, and $x=
(x_{1}, \bar{x})$.
By $A^{-1}=A^t$ and changing of variables, for \eqref{l22} it suffices to show
\begin{eqnarray}\label{l22'}
\left\|\int_{\mathbb{R}^{n}}e^{
ix \cdot\xi}e^{it|A^{-1}\xi|^{2}_\pm}\psi_{j}(\xi)\widehat{\phi}(\xi)\,
d\xi\right\|_{L^{p}_{x_{1}}L^{\infty}_{\bar{x},t}}\lesssim
2^{(\frac{n}{2}-\frac{1}{p})j}\|\phi\|_{L^{2}},
\end{eqnarray}
By standard
$TT^{*}$ argument, for \eqref{l22'} it suffices to show
\begin{eqnarray}\label{l23}
\left\|\int_{\mathbb{R}^{n}}e^{
ix \cdot\xi}e^{it|A^{-1}\xi|^{2}_\pm}\psi_{j}(\xi)
\,d\xi\right\|_{L^{p/2}_{x_{1}}L^{\infty}_{\bar{x},t}}\lesssim
2^{(n-\frac{2}{p})j}.
\end{eqnarray}
Now we begin to prove \eqref{l23}. First we have
\begin{eqnarray}\label{l24}
\Big|\int_{\mathbb{R}^{n}}e^{
ix \cdot\xi}e^{it|A^{-1}\xi|^{2}_\pm}\psi_{j}(\xi) \,d\xi\Big|\lesssim2^{nj}.
\end{eqnarray}
Then by rotation and stationary phase, we have
\begin{eqnarray}\label{l241}
&&\Big|\int_{\mathbb{R}^{n}}e^{
ix \cdot\xi}e^{it|A^{-1}\xi|^{2}_\pm}\psi_{j}(\xi)\,
d\xi\Big|\nonumber\\
&=&\Big|\int_{\mathbb{R}^{n}}e^{
ix\cdot\xi}e^{it|\xi|^{2}_\pm}\psi_{j}(\xi)\, d\xi\Big|
\lesssim|t|^{-\frac{n}{2}}.
\end{eqnarray}
Finally, by integration by parts, for
$|x_{1}|>2^{j+10}|t|$ we have
\begin{eqnarray}\label{l25}
\Big|\int_{\mathbb{R}}e^{
ix \cdot\xi}e^{it|A^{-1}\xi|^{2}_\pm}
\psi_{j}(\xi)d\xi_{1}\Big|\lesssim
\frac{2^{j}}{(1+2^{j}|x_{1}|)^{2}}.
\end{eqnarray}
Let
\[K(x_{1},\bar{x},t)=\int_{\mathbb{R}^{n}}e^{
ix_{1}\xi_{1}}e^{i\bar{x}\bar{\xi}
}e^{it|A^{-1}\xi|^{2}_\pm}\psi_{j}(\xi) \,d\xi_1d\bar{\xi},\]
in view of \eqref{l24},
\eqref{l241} and \eqref{l25}, we have
\begin{eqnarray}\label{ple4}
\sup_{\bar{x},t\in\mathbb{R}}|K(x_{1},\bar{x},t)|\lesssim\left
\{\begin{array}{ll} \displaystyle
2^{nj},\hspace{4.3cm}\text{if}\hspace{0.2cm} |x_1|<2^{-j}
\\
\\
\displaystyle
2^{\frac{nj}{2}}|x_{1}|^{-\frac{n}{2}}+\frac{2^{nj}}{(1+2^{j}|x_{1}|)^{2}},
\hspace{0.3cm}\text{if}\hspace{0.2cm} |x_1|\geq2^{-j}
\end{array}
\right.
\end{eqnarray}
Thus \eqref{l23} follows from \eqref{ple4} since $p\geq2$,
$np\ge 6$.
\end{proof}

\medskip

\begin{lemma}[Strichartz Estimates \cite{KT}]\label{str}
Let $(q, r)$ and $(\tilde{q}, \tilde{r})$   be admissible pairs\footnote{$(q, r)$ is said to be admissible if  $2/q=n(1/2-1/r)$ with $q, r \ge 2$, and $q\neq 2$ for $n=2$.}. We have
\begin{eqnarray}\label{str1}
\left\|W_\pm(t)\phi\right\|_{L_t^qL^r_x}\lesssim
\|\phi\|_{L^{2}},
\end{eqnarray}
\begin{eqnarray}\label{str2}
\Big\|\int_\R W_\pm(-s)F(s)\,ds\Big\|_{L^2_x}\lesssim
\|F\|_{L_t^{\tilde{q}'}L^{\tilde{r}'}_x},
\end{eqnarray}
\begin{eqnarray}\label{str3}
\Big\|\int_0^t
W_\pm(t-s)F(s)\,ds\Big\|_{L_t^qL^r_x}\lesssim
\|F\|_{L_t^{\tilde{q}'}L^{\tilde{r}'}_x},
\end{eqnarray}
where $1/\tilde{q}'+1/\tilde{q}=1$, and
$1/\tilde{r}'+1/\tilde{r}=1$.
\end{lemma}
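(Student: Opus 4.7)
The plan is to reduce Lemma \ref{str} to the abstract Strichartz framework of Keel--Tao, which requires only two inputs: the unitarity of $W_\pm(t)$ on $L^2$ and an $L^1\to L^\infty$ dispersive estimate with the right power of $|t|$.

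First I would verify the energy estimate. Since the Fourier multiplier $e^{-it|\xi|^2_\pm}$ defining $W_\pm(t)$ has modulus one, Plancherel's theorem yields
\begin{equation*}
\|W_\pm(t)\phi\|_{L^2_x} = \|\phi\|_{L^2_x},\qquad t\in\R.
\end{equation*}
Next I would establish the dispersive estimate. Completing the square and performing coordinate-by-coordinate Gaussian integration (or splitting according to the signs $\varepsilon_i\in\{\pm1\}$ and applying the one-dimensional Fresnel formula in each coordinate) shows that the convolution kernel of $W_\pm(t)$ is
\begin{equation*}
K_t(x) = c_n\, t^{-n/2}\, e^{i\sum_{i=1}^n \varepsilon_i x_i^2/(4t)},
\end{equation*}
whose modulus is $\lesssim |t|^{-n/2}$ uniformly in $x$. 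In particular the non-elliptic signs $\varepsilon_i$ enter only through a phase and do not affect $|K_t|$, so
\begin{equation*}
\|W_\pm(t)\phi\|_{L^\infty_x} \lesssim |t|^{-n/2}\,\|\phi\|_{L^1_x}.
\end{equation*}

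With these two ingredients the Keel--Tao machinery applies verbatim. The homogeneous estimate \eqref{str1} follows from a $TT^\ast$ argument: interpolating the $L^2$ and $L^\infty$ bounds on $W_\pm(t)$, then applying Hardy--Littlewood--Sobolev to the time integral for non-endpoint pairs, and using the bilinear interpolation argument of Keel--Tao at the endpoint. The dual estimate \eqref{str2} is the adjoint formulation of \eqref{str1}, while the inhomogeneous estimate \eqref{str3} follows by combining \eqref{str1} with \eqref{str2} and the Christ--Kiselev lemma to pass from the full time integral to the retarded one (using $q\neq\tilde{q}'$, which is guaranteed by the non-endpoint restriction $q\neq 2$ when $n=2$).

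The only place where the non-elliptic signature $\Delta_\pm$ might have been a genuine obstacle is the dispersive bound, but since only the modulus of the kernel enters, this step is in fact insensitive to the signs $\varepsilon_i$; every subsequent step of the Keel--Tao argument is purely abstract and does not see the symbol of the generator. Hence the full list \eqref{str1}--\eqref{str3} holds identically in the elliptic and non-elliptic cases.
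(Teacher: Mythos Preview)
Your proposal is correct and matches what the paper intends: Lemma~\ref{str} is stated with a bare citation to Keel--Tao \cite{KT} and no proof is given, so the paper's ``proof'' is precisely the reduction you describe (unitarity plus the $|t|^{-n/2}$ dispersive bound feed into the abstract Keel--Tao machinery, and the non-elliptic signs $\varepsilon_i$ affect only the phase of the fundamental solution, not its modulus).

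One small remark on your last paragraph: the Christ--Kiselev step you invoke for \eqref{str3} requires $q>\tilde{q}'$, which fails at the \emph{double} endpoint $q=\tilde{q}=2$ for $n\ge 3$ (admissible there). This case is not excluded by the hypothesis ``$q\neq 2$ for $n=2$''. It is, however, covered directly by the bilinear interpolation argument of \cite{KT} that you already mentioned for the homogeneous endpoint, so the statement is still fine; just be aware that Christ--Kiselev alone does not close the full range of \eqref{str3}.
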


\medskip

\medskip

\subsection{The main linear estimates}\label{sect:linear}

Now we consider the inhomogeneous IVP
\begin{align}\label{IVP}
 (i\partial_t +\Delta_\pm) u= F(t,x),    \ \  u(x,0)= 0
\end{align}
with $F\in \mathcal{S}(\R\times \R^n)$. Our main result in this
section is

\begin{lemma}[Smoothing effect: inhomogeneous case]\label{le4}The solution of \eqref{IVP} satisfies
\begin{equation}\label{le41}
\big\|D_{\e_\pm} u\big\|_{L^{\infty,2}_{\e}}\leq
C\|F\|_{L^{1,2}_\e}.
\end{equation}
where $D_{\e_\pm}$ is defined by Fourier multiplier $\xi\cdot
\e_\pm$.
\end{lemma}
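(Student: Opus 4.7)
My plan is to rely on Duhamel's formula, which gives $u(t)=-i\int_0^t W_\pm(t-s)F(s)\,ds$, and to obtain \eqref{le41} in two steps: first establish the corresponding \emph{untruncated} bound (integrating over all of $\R$ in $s$), and then remove the causality truncation via a Christ--Kiselev type argument.

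\textbf{Untruncated bound.} I would first aim at
\[
\bigg\|D_{\e_\pm}\int_\R W_\pm(t-s)F(s)\,ds\bigg\|_{L^{\infty,2}_{\e}} \le C\|F\|_{L^{1,2}_{\e}}.
\]
This is a $TT^{\ast}$ consequence of Lemma \ref{l3}: the operator $T\phi(t,x)=D_{\e_\pm}^{1/2}W_\pm(t)\phi(x)$ is bounded $L^{2}(\R^{n})\to L^{\infty,2}_{\e}$, and its formal adjoint $T^{\ast}F=\int_\R W_\pm(-s)D_{\e_\pm}^{1/2}F(s)\,ds$ maps $L^{1,2}_{\e}\to L^{2}$ by duality, since $(L^{\infty,2}_{\e})^{\ast}=L^{1,2}_{\e}$. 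The composition $TT^{\ast}$ formally produces $|D_{\e_\pm}|\int_\R W_\pm(t-s)F(s)\,ds$. To avoid the sign discrepancy between $|\xi\cdot\e_\pm|$ and $\xi\cdot\e_\pm$, I would actually prove the untruncated bound by repeating the rotation and change-of-variables from the proof of Lemma \ref{l3}: after rotating by the matrix $A$ of \eqref{A} and applying Plancherel in $(\bar x,t)$, one changes variables $\eta=\Psi_{1}(\xi_{1},\bar\xi)$ with $\Psi_{1}(\xi)=|A^{-1}\xi|^{2}_{\pm}$; by \eqref{s21} the Jacobian is $d\eta=2(A^{-1}\xi\cdot\e_\pm)\,d\xi_{1}$, which cancels the symbol $\xi\cdot\e_\pm$ of $D_{\e_\pm}$ \emph{exactly and without any absolute value}. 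The remaining estimate is an essentially one-dimensional problem in $x_{1}$ of the form $L^{1}_{x_{1}}L^{2}_{\bar x,t}\to L^{\infty}_{x_{1}}L^{2}_{\bar x,t}$, which reduces by Plancherel/Minkowski in $\bar\xi$ and Young's convolution inequality on the line to a uniformly bounded kernel.

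\textbf{Causal truncation.} To pass from $\int_{\R}$ to $\int_{0}^{t}$, I would invoke the rotated Christ--Kiselev lemma for anisotropic Lebesgue spaces developed in the last section of the paper; applied to the bounded operator of the previous step, it delivers the causal estimate \eqref{le41}.

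\textbf{Main obstacle.} The genuine difficulty is this last step. The classical Christ--Kiselev lemma requires the truncation variable to carry strictly increasing Lebesgue exponents between source and target; here the truncation is in the time variable $s$, but in both $L^{1,2}_{\e}$ and $L^{\infty,2}_{\e}$ the time is bundled into the \emph{self-dual} $L^{2}_{\bar x,t}$ component, while the only gap $1\to\infty$ lies along the spatial direction $\e$. A direct application therefore fails, and an anisotropic/rotated version of Christ--Kiselev that can exploit this spatial $L^{1}\to L^{\infty}$ gap even though the truncation is in time is essential; invoking this is where the real work of the proof is located.
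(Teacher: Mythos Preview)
Your diagnosis of the obstacle is exactly right, but your proposed remedy---invoking the paper's rotated Christ--Kiselev lemma---does not work: none of the three cases in Lemma~\ref{A20} covers $L^{1,2}_{\e}\to L^{\infty,2}_{\e}$. After rotation the map is $L^{1}_{x_1}L^{2}_{\bar x,t}\to L^{\infty}_{x_1}L^{2}_{\bar x,t}$, and in Lemma~\ref{A11}(1) the hypothesis $\min(p_1,p_2,p_3)>\max(q_1,q_2,q_3,q_1q_3/q_2)$ fails because both sides carry a $2$ in the time slot. This is a genuine endpoint that Christ--Kiselev type arguments cannot reach when the truncation variable is $t$; the paper's appendix is used elsewhere (Lemma~\ref{istr}), not here.

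The paper sidesteps truncation altogether. It takes a particular solution of $(i\partial_t+\Delta_\pm)u=F$ via the full space--time Fourier transform, $\widehat{u}(\tau,\xi)=c\,\widehat{F}(\tau,\xi)/(\tau-|\xi|^2_\pm)$, and proves \eqref{le41} for \emph{this} $u$ directly: after rotation by $A$ and Plancherel in $(\bar x,t)$, the operator becomes convolution in $x_1$ against a kernel
\[
K(\tau,x_1)=\int_\R e^{ix_1\xi_1}\frac{A^{-1}\xi\cdot\e_\pm}{\tau-|A^{-1}\xi|^2_\pm}\,d\xi_1,
\]
and by \eqref{s21} the numerator is (up to a constant) the $\xi_1$-derivative of the denominator, so $K$ reduces to Hilbert-transform type integrals that are uniformly bounded. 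This particular $u$ need not satisfy $u(0)=0$; however one computes $u(0,x)=c\int_\R \operatorname{sgn}(s)\,W_\pm(-s)F(s,x)\,ds$, and the dual homogeneous estimate \eqref{cle31} gives $D^{1/2}_{\e_\pm}u(0,\cdot)\in L^2$. Subtracting $W_\pm(t)u(0,\cdot)$ and applying Lemma~\ref{l3} then yields the solution of \eqref{IVP} together with \eqref{le41}. The missing idea in your proposal is precisely this trade: replace the causal truncation by an initial-data correction controlled via the homogeneous smoothing effect.
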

\begin{proof}[ {Proof of Lemma \ref{le4}}]
Let $u$ satisfy
\begin{align}\label{le411}
\widehat{u}(\tau,\xi)=
c\frac{\widehat{F}(\tau,\xi)}{\tau-|\xi|^2_\pm},
\end{align}
which is a solution of the first equation in \eqref{IVP}.  We have
$$
D_{\e_\pm}u(t,x)=c\int_\R \int_{\R^n} e^{it\tau}e^{ix\cdot\xi}
\frac{\xi\cdot \e_\pm}{\tau-|\xi|^2_\pm}\widehat{F}(\tau,\xi)\,d\xi
\,d\tau.
$$
By definition and \eqref{A1}, we have
\begin{align}\label{le42}
&\big\|D_{\e_\pm} u(t,x)\big\|_{L^{\infty,2}_{\e}}=\big\|D_{\e_\pm} u(t,A^{-1}x)\big\|_{L^{\infty}_{x_1}L^{2}_{\bar{x},t}} \nonumber\\=&c\Big\|\int_\R
\int_{\R^n} e^{it\tau}e^{ix\cdot A\xi}
\frac{\xi\cdot \e_\pm}{\tau-|\xi|^2_\pm}\widehat{F}(\tau,\xi)\,d\xi \,d\tau\Big\|_{L^{\infty}_{x_1}L^{2}_{\bar{x},t}}\nonumber\\
=&c\Big\|\int_\R \int_{\R^n} e^{it\tau}e^{ix\cdot \xi}
\frac{A^{-1}\xi\cdot
\e_\pm}{\tau-|A^{-1}\xi|^2_\pm}\widehat{F}(\tau,A^{-1}\xi)\,d\xi
\,d\tau\Big\|_{L^{\infty}_{x_1}L^{2}_{\bar{x},t}}.
\end{align}
Then we denote $\Omega(\tau,\xi)=\frac{A^{-1}\xi\cdot
\e_\pm}{\tau-|A^{-1}\xi|^2_\pm}$ and apply Plancherel's theorem to
\eqref{le42} in $(\bar{x}, t)$ variables to get
\begin{align}\label{le43}
&\Big\|\int_\R \int_{\R^n} e^{it\tau}e^{ix\cdot \xi}
\Omega(\tau,\xi)\widehat{F}(\tau,A^{-1}\xi)\,d\xi
\,d\tau\Big\|_{L^{2}_{\bar{x},t}}\nonumber\\
=&\Big\|\int_\R e^{ix_1\xi_1}
\Omega(\tau,\xi)\widehat{F}(\tau,A^{-1}\xi)\,d\xi_1 \Big\|_{L^{2}_{\bar{\xi},\tau}}.
\end{align}
Denote
\begin{align}\label{le431}
f(\tau,\xi)=\widehat{F}(\tau,A^{-1}\xi).
\end{align}
Then \eqref{le43} can be rewritten as
\begin{align}\label{le44}
&\Big\|\int_\R e^{ix_1\xi_1}
\Omega(\tau,\xi)f(\tau,\xi)\,d\xi_1
\Big\|_{L^{2}_{\bar{\xi},\tau}}\nonumber\\
=&\Big\|\int_\R
K(\tau,x_1-y_1)\check{f}^{(x_1)}(\tau,y_1,\bar{\xi})\,dy_1
\Big\|_{L^{2}_{\bar{\xi},\tau}},
\end{align}
where $\check{f}^{(x_1)}$ denoting the inverse Fourier transform of $f$ in
$x_1$ variable, and
$$
K(\tau,x_1)=\int_\R e^{ix_1\xi_1} \Omega(\tau,\xi_1,\bar{\xi})\,d\xi_1
$$

Now we claim:
\begin{equation}\label{le45}
K\in L^{\infty}(\R^2),\quad \text{ with norm } M.
\end{equation}

The claim \eqref{le45} combines with Minkowski's inequality and
Plancherel's theorem show that \eqref{le44} can be bounded as follows
\begin{align}\label{le46}
&cM\Big\|\int_\R\check{f}^{(x_1)}(\tau,y_1,\bar{\xi})\,dy_1
\,d\tau\Big\|_{L^{2}_{\bar{\xi},\tau}}\nonumber\\
& \leq cM\int_\R
\|\check{f}^{(x_1)}(\tau,y_1,\bar{\xi})\|_{L^{2}_{\bar{\xi},\tau}}\,dy_1 \\
& = cM\int_\R \|\check{f}(t,y_1,y')\|_{L^{2}_{y',t}}\,dy_1,\nonumber
\end{align}
then apply \eqref{le431} and \eqref{A1}, we continue with
\begin{align}\label{le47}
=& cM\int_\R \|F(t,A^{-1}y)\|_{L^{2}_{y',t}}\,dy_1 \nonumber\\
=& cM \|F(t,y)\|_{L^{1,2}_{\e}}.
\end{align}
which yields \eqref{le41}.

It remains to prove the claim \eqref{le45},
\begin{align}\label{le48}
K(\tau,x_1)=\int_\R e^{ix_1\xi_1}\Omega(\tau,\xi_1,\bar{\xi})\,d\xi_1,
\end{align}
where
\begin{align}\label{le49}
\Omega(\tau,\xi)=\frac{A^{-1}\xi\cdot \e_\pm}{\tau-|A^{-1}\xi|^2_\pm}
\end{align}
if we fix $\bar{\xi}$, $\tau$ and $\e$, and then denote
$E(\xi_1):= A^{-1}\xi\cdot \e_\pm$ and
$F(\xi_1):=\tau-|A^{-1}\xi|^2_\pm$. In view of \eqref{s21}, we
have
$$
\frac{d}{d\xi_1}F(\xi_1)=-2E(\xi_1),
$$
so, we can assume for some $a,b,c \in \R$ depending on $\bar{\xi}$, $\tau$ and $\e$, such that
$$
E(\xi_1)=a\xi_1+b, \quad -F(\xi_1)=\frac{1}{2}a\xi_1^2+b\xi_1+c.
$$
If $a=0$ and $b=0$, then $\Omega(\tau,\xi)=0$ and so $K=0$.

If  $a=0$ and $b\neq0$ then we have
\begin{align}\label{le4110}
-K(\tau,x_1)=\int_\R e^{ix_1\xi_1}\frac{b}{b\xi_1+c}\,d\xi_1,
\end{align}
this is just the Fourier transform of Hilbert transform, thus
bounded.

If  $a\neq0$ and $b\neq0$ then we have
\begin{align}\label{le4111}
-K(\tau,x_1)=&\int_\R
e^{ix_1\xi_1}\frac{a\xi_1+b}{\frac{1}{2}a\xi^2_1+b\xi_1+c}\,d\xi_1\nonumber\\
=&\frac{1}{2}\int_\R
e^{ix_1\xi_1}\frac{\xi_1+\frac{b}{a}}{(\xi_1+\frac{b}{a})^2+c-(\frac{b}{a})^2}\,d\xi_1\nonumber\\
=&\frac{1}{2}e^{ix_1\frac{b}{a}}\int_\R
e^{ix_1\xi_1}\frac{\xi_1}{\xi_1^2+c-(\frac{b}{a})^2}\,d\xi_1
\nonumber\\
=&\frac{1}{2}e^{ix_1\frac{b}{a}}\int_\R
e^{ix_1\xi_1}\frac{\xi_1}{\xi_1^2-\lambda}\,d\xi_1,
\end{align}
which is bounded by a standard argument as in \cite{KPV1} and  we omit the
details.

In general, the $u$ defined in \eqref{le411} may not vanishi at
$t=0$. However by Parseval's indentity we have
\begin{align*}
u(0,x)=&c\int_{\R^n} e^{ix\cdot\xi} \int_\R e^{it\tau}
\frac{1}{\tau-|\xi|^2_\pm}\widehat{F}(\tau,\xi) \,d\tau\,d\xi\\
=&c\int_{\R^n} e^{ix\cdot\xi} \int_\R\widehat{F}^{x}(s,\xi) sgn(s)
e^{-is|\xi|^2_\pm}\,ds \,d\xi\\
=&c \int_\R e^{-is\Delta_\pm} F(s,x) sgn(s)\,ds.
\end{align*}
Now from \eqref{cle31} it follows that $D^{1/2}_{\e_\pm}u(0,x)\in
L^2(\R^n)$, which combine with \eqref{l31} shows that
$$
u(t,x)-e^{-it\Delta_\pm}u(0,x)
$$
is the solution of \eqref{IVP} and satisfies the estimate
\eqref{le41}.
\end{proof}

\medskip

The following result follows directly from Lemma \ref{le4}.

\begin{corollary}\label{cl4}
For $F \in\mathcal{S}(\mathbb{R}^{n+1})$, $j\in \Z$ and $n\geq 2$, then
\begin{eqnarray}\label{cl41}
2^{j/2}\Big\| P_jQ^{\mathbf{e}_\pm}_{j,20}\int_0^tW_\pm(t-s)F(s)\,ds\Big\|_{L^{\infty, 2}_{\e}}\lesssim2^{-j/2}\sup_{\e'\in \s^{n-1} }\|P_jF\|_{L^{1,2}_{\e'}}.
\end{eqnarray}
\end{corollary}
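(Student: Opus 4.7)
The plan is to apply the inhomogeneous smoothing estimate of Lemma~\ref{le4} to the frequency-localized Duhamel integral
$$v := P_j Q^{\mathbf{e}_\pm}_{j,20} \int_0^t W_\pm(t-s) F(s)\, ds,$$
and then exploit the fact that on the Fourier support of $v$ the directional derivative $D_{\mathbf{e}_\pm}$ is comparable to the dyadic scale $2^j$. Since the Fourier multipliers $P_j$ and $Q^{\mathbf{e}_\pm}_{j,20}$ commute with $i\partial_t + \Delta_\pm$, $v$ itself solves \eqref{IVP} with right-hand side $G := P_j Q^{\mathbf{e}_\pm}_{j,20} F$ and zero initial data, so Lemma~\ref{le4} immediately delivers
$$\|D_{\mathbf{e}_\pm} v\|_{L^{\infty,2}_{\mathbf{e}}} \lesssim \|G\|_{L^{1,2}_{\mathbf{e}}}.$$

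To extract the factor of $2^j$ on the left, I would observe that on the Fourier support of $v$, namely $\{|\xi| \sim 2^j,\ |\xi \cdot \mathbf{e}_\pm| \gtrsim 2^{j-20}\}$, one has $|\xi \cdot \mathbf{e}_\pm| \sim 2^j$. The Fourier multiplier with symbol $(\xi \cdot \mathbf{e}_\pm)^{-1}\psi_j(\xi)\psi_{\ge j-20}(\xi \cdot \mathbf{e}_\pm)$ is then smooth, compactly supported in a region of volume $O(2^{nj})$ and of amplitude $O(2^{-j})$, so its inverse Fourier transform is a Schwartz-like kernel of total $L^{1}$-mass $O(2^{-j})$. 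Since convolution by an $L^1$ kernel is bounded on any anisotropic mixed-norm space $L^{p,q}_{\mathbf{e}'}$ (by Minkowski's integral inequality, with operator norm equal to the kernel's $L^1$-norm and \emph{independent} of the orientation $\mathbf{e}'$), this yields $\|v\|_{L^{\infty,2}_{\mathbf{e}}} \lesssim 2^{-j}\|D_{\mathbf{e}_\pm}v\|_{L^{\infty,2}_{\mathbf{e}}}$.

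For the right-hand side, writing $G = Q^{\mathbf{e}_\pm}_{j,20} P_j F$ (by commutativity) and noting that the symbol $\psi_{\ge j - 20}(\xi \cdot \mathbf{e}_\pm)$ restricted to the annulus $\{|\xi| \sim 2^j\}$ is again smooth, compactly supported at scale $2^j$, and bounded by $O(1)$, the same kernel reasoning gives $\|G\|_{L^{1,2}_{\mathbf{e}}} \lesssim \|P_j F\|_{L^{1,2}_{\mathbf{e}}} \le \sup_{\mathbf{e}' \in \mathbb{S}^{n-1}} \|P_j F\|_{L^{1,2}_{\mathbf{e}'}}$. Chaining the three estimates produces $2^j \|v\|_{L^{\infty,2}_{\mathbf{e}}} \lesssim \sup_{\mathbf{e}'} \|P_j F\|_{L^{1,2}_{\mathbf{e}'}}$, which is the claim after dividing through by $2^{j/2}$. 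The only step that warrants real attention---and what I would flag as the main obstacle---is the uniform-in-$\mathbf{e}$ boundedness of these spatial convolution operators on the anisotropic space $L^{p,q}_{\mathbf{e}}$; fortunately the $L^1$-norm of a spatial kernel is rotation-invariant, so the bound transfers uniformly across all directions $\mathbf{e}\in\mathbb{S}^{n-1}$, making the reduction clean.
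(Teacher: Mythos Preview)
Your proof is correct and follows exactly the approach the paper intends: the paper simply records that Corollary~\ref{cl4} ``follows directly from Lemma~\ref{le4}'', and you have spelled out the two routine steps this entails---extracting the factor $2^j$ on the left via the $L^1$-kernel bound for the multiplier $(\xi\cdot\mathbf{e}_\pm)^{-1}\tilde\psi_j(\xi)\psi_{\ge j-20}(\xi\cdot\mathbf{e}_\pm)$, and discarding $Q^{\mathbf{e}_\pm}_{j,20}$ on the right by the same mechanism. Your remark that the $L^1$-norm of a spatial convolution kernel controls the operator norm on $L^{p,q}_{\mathbf e}$ uniformly in $\mathbf e$ is precisely the (unstated) justification the paper relies on throughout.
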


\quad

\begin{lemma}\label{le3}
Let $p\ge 2$ for $n\ge 3$, and $p\ge 3$ for $n= 2$. Then the solutions of
\eqref{IVP} satisfies
\begin{equation}\label{le41a}
2^{-(\frac{n}{2}-\frac{1}{p})j}\|P_j u\|_{L^{p,\infty}_{\e'}}\leq
C2^{-j/2}\sup_{\mathbf{e}\in
\s^{n-1}}||P_jF||_{L_{\mathbf{e}}^{1,2}}.
\end{equation}
where $\e'\in \s^{n-1}$.
\end{lemma}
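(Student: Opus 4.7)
The overall strategy is a $TT^\ast$-type argument combining the linear maximal function estimate (Lemma~\ref{l2}) with the homogeneous smoothing effect (Corollary~\ref{cl3}), closed by a Christ--Kiselev reduction. By the rotated anisotropic Christ--Kiselev lemma announced for the paper's final section, it suffices to prove the analogous bound for the untruncated Duhamel term
\[
\tilde u(t):=\int_\R W_\pm(t-s)F(s)\,ds=W_\pm(t)\phi,\qquad \phi:=\int_\R W_\pm(-s)F(s)\,ds.
\]
Applying Lemma~\ref{l2} to $P_j\tilde u=W_\pm(t)P_j\phi$ gives
\[
2^{-(n/2-1/p)j}\|P_j\tilde u\|_{L^{p,\infty}_{\e'}}\lesssim\|P_j\phi\|_{L^2},
\]
so the task reduces to the $L^2$ estimate
\[
\|P_j\phi\|_{L^2}\lesssim 2^{-j/2}\sup_{\e\in\s^{n-1}}\|P_jF\|_{L^{1,2}_\e}.
\]

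For the $L^2$ bound I would dualize and use an angular decomposition of the annulus $\{|\xi|\sim 2^j\}$. Fix a finite smooth partition of unity $\{\chi_\ell\}_{\ell=1}^N$ on $\s^{n-1}$ subordinate to caps $C_\ell$ of small aperture centered at points $\eta_\ell$, and for each $\ell$ choose $\e_\ell\in\s^{n-1}$ with $(\e_\ell)_\pm=\eta_\ell$ (possible because $\e\mapsto\e_\pm$ is a componentwise sign flip). Let $P_{j,\ell}$ be the Fourier multiplier with symbol $\psi_j(\xi)\chi_\ell(\xi/|\xi|)$, so $P_j=\sum_\ell P_{j,\ell}$, and fix a slight enlargement $\tilde P_{j,\ell}$ so that $\tilde P_{j,\ell}P_{j,\ell}=P_{j,\ell}$; for sufficiently small caps the Fourier support of $\tilde P_{j,\ell}$ lies in $\{|\xi\cdot(\e_\ell)_\pm|\gtrsim 2^j\}$, giving $\tilde P_{j,\ell}=\tilde P_{j,\ell}Q^{(\e_\ell)_\pm}_{j,10}$. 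Triangle inequality, duality, and insertion of $\tilde P_{j,\ell}$ on the dual side yield
\begin{align*}
\|P_j\phi\|_{L^2}&\leq\sum_\ell\sup_{\|\psi\|_{L^2}=1}\Big|\int_\R\langle P_{j,\ell}F(s),\,\tilde P_{j,\ell}Q^{(\e_\ell)_\pm}_{j,10}W_\pm(s)\psi\rangle\,ds\Big|\\
&\leq\sum_\ell\|P_{j,\ell}F\|_{L^{1,2}_{\e_\ell}}\sup_{\|\psi\|_{L^2}=1}\|\tilde P_{j,\ell}Q^{(\e_\ell)_\pm}_{j,10}W_\pm(s)\psi\|_{L^{\infty,2}_{\e_\ell}}.
\end{align*}
The second factor is $\lesssim 2^{-j/2}\|\psi\|_{L^2}$ via Corollary~\ref{cl3} (summing over $j'\in\{j-1,j,j+1\}$ to absorb $\tilde P_j$) combined with the $L^{\infty,2}_{\e_\ell}$-boundedness of the angular-radial multiplier inside $\tilde P_{j,\ell}$, whose kernel is Schwartz at scale $2^j$ and hence $L^1$ uniformly in $j$. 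The first factor satisfies $\|P_{j,\ell}F\|_{L^{1,2}_{\e_\ell}}\lesssim\|P_jF\|_{L^{1,2}_{\e_\ell}}\leq\sup_\e\|P_jF\|_{L^{1,2}_\e}$ by the same $L^1$-kernel argument. Summing over the $O(1)$ indices $\ell$ closes the $L^2$ estimate.

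The principal obstacle is the Christ--Kiselev step converting the bound for $\tilde u$ into one for $u(t)=\int_0^t W_\pm(t-s)F(s)\,ds$. The target norm $L^{p,\infty}_{\e'}$ is $L^\infty$ in time while the RHS norm reduces after rotation to an $L^2$ in time, so in principle a gap between time exponents is available; however, the anisotropic mixed-norm structure puts this outside the classical Christ--Kiselev lemma. The rotated Christ--Kiselev lemma proved in the paper's final section is designed for precisely this configuration: after the rotation $A$ identifying $\e'$ with $\e_0$, the target reads $L^p_{x_1}L^\infty_{\bar x,t}$ and the source reads $L^1_{y_1}L^2_{\bar y,t}$ in a generally different rotated frame, and the anisotropic truncation lemma can then be invoked to finish the proof.
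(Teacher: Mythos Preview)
Your $TT^\ast$ reduction via the untruncated Duhamel term is sound for $p>2$, and your $L^2$ bound on $\phi=\int_\R W_\pm(-s)F(s)\,ds$ is exactly the dual smoothing estimate \eqref{cle32} together with an angular decomposition (this is in fact how the paper proves Lemma~\ref{le5}). The gap is at the endpoint $p=2$, which the lemma explicitly covers for $n\ge 3$ and which is precisely the case needed for cubic nonlinearities ($m=3$, so the working space uses $L^{m-1,\infty}_\e=L^{2,\infty}_\e$). The rotated Christ--Kiselev lemma you invoke, Lemma~\ref{A20}(1), requires $p>2$ strictly: after rotation the target is $L^p_{x_1}L^\infty_{\bar x,t}$ and the source $L^1_{y_1}L^2_{\bar y,t}$, so the condition from Lemma~\ref{A11}(1) reads $\min(p,\infty,\infty)>\max(1,2,2,1\cdot 2/2)=2$. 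At $p=2$ there is no exponent gap and the truncation argument fails.

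The paper avoids Christ--Kiselev here entirely and works directly with the fundamental solution. After the angular localization and the rotation $A\e=\e_0$, the equation becomes $(i\partial_t+\Delta^\e_\pm)u=F$ with symbol $|A^{-1}\xi|^2_\pm$; splitting over $y_1$ and translating to $y_1=0$ gives
\[
u(t,x)=\int_{\R^{n+1}}\frac{e^{it\tau}e^{ix\cdot\xi}}{\tau-|A^{-1}\xi|^2_\pm+i0}\,\hat F(\tau,\bar\xi)\,d\xi\,d\tau.
\]
The key computation is to factor $\tau-|A^{-1}\xi|^2_\pm=c(\xi_1-s_1)(\xi_1-s_2)$ as a quadratic in $\xi_1$, apply partial fractions, and evaluate the $\xi_1$-integral as a residue: each piece equals $i\,\mathrm{sgn}(x_1)\,e^{it\Delta^\e_\pm}v_0$ for an explicit $v_0$ with $\|v_0\|_{L^2}\lesssim 2^{-j/2}\|F\|_{L^2_{\bar x,t}}$ (the factor $2^{-j/2}$ comes from the change of variables $\tau\mapsto\eta$ and the derivative identity \eqref{s21}). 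Since $|\mathrm{sgn}(x_1)|=1$, the maximal function estimate Lemma~\ref{l2} applies directly for every $p\ge 2$ with $np\ge 6$, including $p=2$. This is what the paper means by ``a different argument'' from the Bejenaru--Ionescu--Kenig--Tataru proof of the elliptic $p=2$ case.
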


The case for $\Delta_\pm=\Delta$, $p=2$ and $n\ge 3$ was already proved by
Bejenaru, Ionescu, Kenig, Tataru in \cite{BIKT}. Here we employ a different
argument.

\begin{proof}[{Proof of Lemma \ref{le3}}]

Using a smooth angular partition of unity in frequency, we can assume that $P_ju$ and $P_jF$
is frequency localized to a region $\{\xi:  \xi \cdot \mathbf{e}_\pm\in[2^{j-2},2^{j+2}]\}$
for some $\e \in \mathbb S^{n-1}$, it suffices to prove the stronger bound
  \begin{equation}
    2^{-(\frac{n}{2}-\frac{1}{p})j}\|P_j u\|_{L^{p,\infty}_{\e'}}\lesssim 2^{-j/2} \|F\|_{L^{1,2}_{\e} },
    \label{lud}
  \end{equation}
We rotate the space so that $\mathbf{e} = \mathbf{e}_0$, then the
function \eqref{IVP} reduce to\footnote{Space rotation change the form of the equation, since it's non-elliptic.
For example, in 2-dimension, $i\partial_t u +(\partial_{x_1}^2-\partial_{x_2}^2)u=0$ become to
$i\partial_t u +\partial_{x_1}\partial_{x_2}u=0$ after rotating the space $\pi/4$ clockwise.
And this is the main difficulty of this proof.}
  \begin{equation}\label{IVPb}
    (i\partial_t+\Delta^\e_\pm)u=F\text{ on }\R^n\times\R,\quad u(0)=0.
  \end{equation}
where $\widehat{\Delta^\e_\pm u}(\tau,\xi)=|A^{-1}\xi|_\pm^2\widehat{u}(\tau,\xi)$,
$A$ is defined in \eqref{A} related to $\e$. And here $P_ju$ and $P_jF$
is frequency localized to a region $\{\xi; A^{-1}\xi \cdot \mathbf{e}_\pm
\in[2^{j-2},2^{j+2}]\}$. So for \eqref{lud}, it suffices to prove that the
$u$ in \eqref{IVPb} satisfies
   \begin{equation}
  2^{-(\frac{n}{2}-\frac{1}{p})j}\|P_j u\|_{L^{p,\infty}_{\e'}} \lesssim 2^{-j/2}
   \|F\|_{L^{1}_{x_1}L^2_{\bar{x},t}}.
    \label{lud1}
  \end{equation}
the solution $u$ of \eqref{IVPb} can be expressed as
  \begin{align*}
    u(t,x) = & \int_{\R^{n+1}}\frac{e^{it \tau }e^{ix\cdot\xi}}{\tau-|A^{-1}\xi|^2_\pm}
    \widehat{F}(\tau, \xi) \, d\xi d\tau \\
    = & \int_\R \int_{\R^{n+1}}\frac{e^{it \tau }e^{ix\cdot\xi}}{\tau-|A^{-1}\xi|^2_\pm}
    \Big[\int_{\R^{n}}e^{i\theta \tau }e^{iy\cdot\xi} F(\theta, y_1,y')\,
    dy'd\theta\Big] \, d\xi d\tau\, dy_1 \\
    & = \int_\R u_{y_1}(t,x) d y_1,
  \end{align*}
  where $y=(y_1,y')$, and
  \[
  u_{y_1}(t,x) = \int_{\R^{n+1}}\frac{e^{it \tau }e^{ix\cdot\xi}}{\tau-|A^{-1}\xi|^2_\pm}
    \Big[\int_{\R^{n}}e^{i\theta \tau }e^{iy\cdot\xi} F(\theta, y_1,y')\,
    dy'd\theta\Big] \, d\xi d\tau.
  \]
  For \eqref{lud1}, it suffices to show that
\begin{equation}
   2^{-(\frac{n}{2}-\frac{1}{p})j}\|P_j u_{y_1}\|_{L^{p,\infty}_{\e'}}
   \lesssim 2^{-j/2} \|F(t,y_1,y')\|_{L_{y',t}^2}.
    \label{fin}
\end{equation}
By translation invariance we can set
$y_1=0$ and drop the parameter $y_1$ from the notations. Thus
\[
 u(t,x) =\int_{\R^{n+1}} \frac{e^{it\tau}e^{ix\cdot\xi}}{\tau-|A^{-1}\xi|_\pm^2+i0}
\hat F(\tau,\bar{\xi}) d\xi d\tau.
\]
where $\bar{\xi}\in \R^{n-1}$ and $\xi=(\xi_1,\bar{\xi})$. Now we view $\tau-|A^{-1}\xi|_\pm^2$
as a quadratic of $\xi_1$ variable, then we can decomposition it as
\begin{align}\label{decom:q}
\Theta(\xi_1)=\tau-|A^{-1}\xi|_\pm^2 = c(\xi_1-s_1)(\xi_1-s_2),
\end{align}
where $s_i:=s_i(\tau, \bar{\xi})$. We can assume that $s_1\neq s_2$, since the set $\{(\tau, \bar{\xi}): \ s_1(\tau, \bar{\xi})= s_2(\tau, \bar{\xi})\}$ is a zero-measure set. First, we assume here that $s_1$ and $s_2$ are real numbers. Then we have
\begin{align*}
 u(t,x) =&\int_{\R^{n+1}} \frac{e^{it\tau}e^{ix\cdot\xi}
 }{c(\xi_1-s_1)(\xi_1-s_2)}\hat F(\tau,\bar{\xi})\,d\xi d\tau\\
 =&\int_{\R^{n+1}}\frac{e^{it\tau}e^{ix\cdot\xi}}{c(s_1-s_2)}
 \Big[\frac{1}{\xi_1-s_1}-\frac{1}{\xi_1-s_2}\Big]\hat F(\tau,\bar{\xi})\,d\xi d\tau\\
 =&\int_{\R^{n+1}}\frac{e^{it\tau}e^{ix\cdot\xi}}{c(s_1-s_2)}
 \frac{1}{\xi_1-s_1}\hat F(\tau,\bar{\xi})\,d\xi d\tau\\
 &\qquad-\int_{\R^{n+1}}\frac{e^{it\tau}e^{ix\cdot\xi}}{c(s_1-s_2)}
 \frac{1}{\xi_1-s_2}\hat F(\tau,\bar{\xi})\,d\xi d\tau\\
 := &I_1+I_2.
\end{align*}
By symmetry, we only consider $I_1$. And we continue with
\begin{align*}
I_1 =&\int_{\R^{n+1}}\frac{ e^{it\tau}e^{i\bar{x}\cdot\bar{\xi}}}{c(s_1-s_2)}\Big[\int_\R
  \frac{e^{ix_1\xi_1}}{\xi_1-s_1}\,d\xi_1\Big]\,\hat F(\tau,\bar{\xi})\,d\bar{\xi} d\tau\\
=&\int_{\R^{n+1}}\frac{e^{it\tau}e^{i\bar{x}\cdot\bar{\xi}}}{c(s_1-s_2)}\,[e^{ix_1s_1}i\, sgn(x_1)]\,
\hat F(\tau,\bar{\xi})\,d\bar{\xi} d\tau.
\end{align*}
In view of the definition of $s_1$, we notice that $\tau-|A^{-1}(s_1, \bar{\xi})|_\pm^2=
\Theta(s_1)=0$, thus we have $\tau=|A^{-1}(s_1, \bar{\xi})|_\pm^2$.
Then
\begin{align*}
I_1=&i\, sgn(x_1)\int_{\R^{n+1}}\frac{e^{ix_1s_1}e^{i\bar{x}\cdot\bar{\xi}}}{c(s_1-s_2)}\,
e^{-it|A^{-1}(s_1, \bar{\xi})|_\pm^2}\hat F(\tau,\bar{\xi})\,d\bar{\xi} d\tau.
\end{align*}
Change variable $\eta = s_1 (\tau, \bar{\xi})$ with
\begin{align*}
d\tau=\partial_{\xi_1}\Big|_{\xi_1=s_1}|A^{-1}\xi|_\pm^2d\eta
= \partial_{\xi_1}\Big|_{\xi_1=s_1}\Theta(s_1)\,d\eta
= c(s_1-s_2)\,d\eta.
\end{align*}
where the last step holds since \eqref{decom:q}.
Then we continue with
\begin{align*}
I_1=&i\, sgn(x_1) \int_{\R^{n+1}}e^{ix_1\eta}e^{i\bar{x}\cdot\bar{\xi}}\,
e^{-it|A^{-1}(\eta, \bar{\xi})|_\pm^2}\hat F(|A^{-1}(\eta, \bar{\xi})|_\pm^2,\bar{\xi})\,d\bar{\xi} d\eta\\
=& i\, sgn(x_1) \int_{\R^{n+1}}\,e^{ix\cdot\xi}
e^{-it|A^{-1}\xi|_\pm^2}\hat F(|A^{-1}\xi|_\pm^2,\bar{\xi})\,d\xi\\
:=& i\, sgn(x_1) e^{it\Delta_\pm^\e}v_0.
\end{align*}
where $\hat v_0(\xi) = \hat F(|A^{-1}\xi|_\pm^2,\bar{\xi})$. Then by Lemma \ref{l2}, we have
\[
   2^{-(\frac{n}{2}-\frac{1}{p})j}\|P_j I_1\|_{L^{p,\infty}_{\e'}}
   =2^{-(\frac{n}{2}-\frac{1}{p})j}\|P_je^{it\Delta_\pm^\e}v_0\|_{L^{p,\infty}_{\e'}}
   \lesssim  \|v_0\|_{L^2}.
\]
Thus for \eqref{fin}, it suffices to prove
\begin{align}\label{v0f}
\|v_0\|_{L^2}\lesssim 2^{-j/2} \|F\|_{L_{\bar{x},t}^2},
\end{align}
which follows from changing variable argument in \eqref{s21} and the
frequency localization assumption on $u$.

It remains to consider the case when $s_i$ are complex numbers. Let $s_1=a+ib$ for some $a$, $b\in \R$,
and then we must have $s_2=a-ib$. Thus
\[
\tau+|A^{-1}\xi|_\pm^2 = c(\xi_1-a-ib)(\xi_1-a+ib),
\]
and furthermore
\begin{align}\label{decom}
&\int_\R \frac{e^{ix_1\xi_1}}{\tau+|A^{-1}\xi|_\pm^2}\,d\xi_1\nonumber\\
=&\int_\R \frac{e^{ix_1\xi_1}}{c(\xi_1-a-ib)(\xi_1-a+ib)}\,d\xi_1\nonumber\\
=&\frac{1}{2icb}\int_\R \frac{e^{ix_1\xi_1}}{\xi_1-a-ib}\,d\xi_1
 + \frac{1}{2icb}\int_\R \frac{e^{ix_1\xi_1}}{\xi_1-a+ib}\,d\xi_1\nonumber\\
=&\frac{1}{2icb}e^{ix_1 a}\Big[\int_\R \frac{e^{ix_1\xi_1}}{\xi_1-ib}\,d\xi_1
 + \int_\R \frac{e^{ix_1\xi_1}}{\xi_1+ib}\,d\xi_1\Big].
\end{align}
By the boundness of Hilbert transform, for any $x_1$, $b\in \R$
\begin{align}\label{Hilbert}
\Big|\int_\R \frac{e^{ix_1\xi_1}}{\xi_1+ib}\,d\xi_1\Big|\le C.
\end{align}
Then the  left part of the proof follows from the same argument, where $s_i$ are real,
with \eqref{decom} and \eqref{Hilbert}. Thus we omit the details here.
\end{proof}

\medskip
We notice that the dual version of Lemma \ref{l3} and Corollary \ref{cl3} are given by
\begin{equation}\label{cle31}
\Big\| D_{\e_\pm}^{1/2}\int_\R W_{\pm}(-s)F(s)\,ds\Big\|_{L^2}\leq
C||F||_{L^{1,2}_\e},
\end{equation}
and
\begin{equation}\label{cle32}
\Big\| P_j\int_\R W_{\pm}(-s)Q_{j,10}^{\e_\pm}F(s)\,ds\Big\|_{L^2}\leq
C2^{-j/2}||P_jF||_{L^{1,2}_\e}.
\end{equation}

\begin{lemma}\label{le5}We have the following estimate
\begin{equation}\label{le51}
\Big\| P_j\int_0^tW_\pm(t-s)F(s)\,ds\Big\|_{L^\infty_tL^{2}_{x}}\lesssim2^{-j/2}\sup_{\e\in \s^{n-1} }\|P_jF\|_{L^{1,2}_\e}.
\end{equation}
\end{lemma}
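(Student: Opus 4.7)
The plan is to reduce \eqref{le51} to the dual smoothing estimate \eqref{cle32}: first, pass from the retarded time integral $\int_0^t$ to the full integral $\int_\R$ by a Christ--Kiselev type argument; then remove the directional hypothesis in \eqref{cle32} via an angular decomposition in frequency.

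For the first step, write
\[
P_j\int_0^t W_\pm(t-s)F(s)\,ds = W_\pm(t)\int_0^t W_\pm(-s)P_j F(s)\,ds,
\]
so, since $W_\pm(t)$ is unitary on $L^2_x$, the left-hand side of \eqref{le51} equals $\sup_t\bigl\|\int_0^t W_\pm(-s)P_j F\,ds\bigr\|_{L^2_x}$. The rotated Christ--Kiselev lemma of the last section then reduces matters to the untruncated estimate
\[
\Big\|\int_\R W_\pm(-s) P_j F(s)\,ds\Big\|_{L^2_x}\ls 2^{-j/2}\sup_{\e\in \s^{n-1}}\|P_jF\|_{L^{1,2}_\e}.
\]

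To prove this untruncated bound, I decompose the frequency annulus $\{|\xi|\sim 2^j\}$ into finitely many angular sectors. Choose unit vectors $\e^{(1)},\dots,\e^{(N)}\in\s^{n-1}$ and smooth angular cutoffs $\chi_1,\dots,\chi_N$ with $\sum_\alpha \chi_\alpha\equiv 1$ on $\s^{n-1}$ such that $|\xi\cdot \e^{(\alpha)}_\pm|\gs 2^j$ on $\supp(\psi_j\,\chi_\alpha(\xi/|\xi|))$; this is possible because $\e\mapsto \e_\pm$ is a bijection of $\s^{n-1}$. Let $P_j^{(\alpha)}$ be the Fourier multiplier with symbol $\psi_j(\xi)\chi_\alpha(\xi/|\xi|)$, so that $P_j=\sum_\alpha P_j^{(\alpha)}$ and $P_j^{(\alpha)} F = Q_{j,10}^{\e^{(\alpha)}_\pm}P_j^{(\alpha)} F$. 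Applying \eqref{cle32} with $\e=\e^{(\alpha)}$ to each piece, and using that the $P_j^{(\alpha)}$ are convolutions with Schwartz kernels of uniformly bounded $L^1$ norm (hence bounded on each anisotropic space $L^{1,2}_\e$ by Minkowski's inequality), I obtain
\[
\Big\|\int_\R W_\pm(-s) P_j F\,ds\Big\|_{L^2_x}\ls\sum_{\alpha=1}^N 2^{-j/2}\|P_j^{(\alpha)} F\|_{L^{1,2}_{\e^{(\alpha)}}}\ls 2^{-j/2}\sup_{\e\in \s^{n-1}}\|P_j F\|_{L^{1,2}_\e}.
\]

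The main technical obstacle lies in the first reduction: the classical Christ--Kiselev lemma handles $L^p_t\to L^q_t$ truncations with $p<q$, but here the source norm $L^{1,2}_\e$ entangles the time variable with the hyperplane variables parametrizing $P_\e$, so a version of the lemma adapted to these anisotropic Lebesgue spaces is required. This rotated anisotropic Christ--Kiselev lemma is the subject of the final section of the paper; once it is in hand, the argument above completes the proof.
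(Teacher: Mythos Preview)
Your proof is correct, but you have taken a more elaborate route than the paper and, in particular, misidentified where the difficulty lies.

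The paper uses the same two ingredients---the angular decomposition in frequency and the dual smoothing bound \eqref{cle32}---but handles the passage from $\int_\R$ to $\int_0^t$ by an elementary trick rather than Christ--Kiselev. Because the target norm is $L^\infty_t L^2_x$, one may simply fix $t$ and substitute $F$ by $\chi_{[0,t]}F$ in the untruncated inequality \eqref{le33}. Since $P_j$ acts only in $x$, it commutes with $\chi_{[0,t]}$, and $\|\chi_{[0,t]}P_jF\|_{L^{1,2}_\e}\le\|P_jF\|_{L^{1,2}_\e}$ pointwise in $t$; taking the supremum over $t$ gives \eqref{le51} directly. What you flagged as ``the main technical obstacle'' is therefore trivial in this particular lemma, precisely because the time exponent on the output side is $\infty$. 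The rotated Christ--Kiselev machinery \emph{is} genuinely needed elsewhere in the paper---notably in Lemma~\ref{istr}, where the output lies in $L^q_tL^r_x$ with $q<\infty$---but invoking it here is overkill.
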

\begin{proof}[{Proof of Lemma \ref{le5}}]
We can assume that $P_jF$ is frequency localized to a region $\{\xi:
\  \xi \cdot \mathbf{e}_\pm\in[2^{j-2},2^{j+2}]\}$ for some
$\mathbf{e} \in \mathbb S^{n-1}$, since finite such regions can
cover the annulus $\{\xi: \  |\xi|\sim 2^{j}\}$. So we may assume
that $P_jF \in L^{1,2}_{\mathbf{e}} $ and it suffices to prove the
stronger bound
\begin{equation}\label{le32}
\Big\|\int_0^tW_\pm(t-s) P_jF(s)\,ds\Big\|_{L^\infty_tL^{2}_{x}}\lesssim2^{-j/2}\|P_jF\|_{L^{1,2}_\e}.
\end{equation}
In view of
\eqref{cle32}, we notice that
\begin{align}\label{le33}
&\Big\| \int W_\pm(t-s)P_jF(s)\,ds\Big\|_{L^{2}_{x}}\nonumber\\ =& \Big\|\int
W_\pm(-s) P_jF(s)\,ds\Big\|_{L^2} \lesssim2^{-j/2}\|P_jF\|_{L^{1,2}_\e},
\end{align}
To conclude we substitute $F(s)$ by $\chi_{[0,t]}(s)F(s)$ then take
the supremum in time in the left hand side of the resulting
inequality.
\end{proof}

\quad

Sometimes, we need Strichartz estimates to deal with the low frequency parts.
\begin{lemma}\label{istr}
Let $(q, r)$ be an admissible pair with $q, r > 2$\footnote{Condition $q,r>2$ is necessary in our argument, since we have used the generalized Christ-Kieslev lemma as in Lemma \ref{A20}.}. We have
\begin{eqnarray}\label{istr1}
\Big\|\int_0^t W_\pm(t-s)P_jF(s)\,ds\Big\|_{L_t^qL^r_x}\lesssim
2^{-j/2}\sup_{\e\in \s^{n-1} }\|P_jf\|_{L^{1,2}_\e},
\end{eqnarray}
and
\begin{eqnarray}\label{istr3}
2^{j/2}\Big\|P_jQ^{\mathbf{e}_\pm}_{j,20}\int_0^t
W_\pm(t-s)F(s)\,ds\Big\|_{L^{\infty, 2}_{\e}}\lesssim
\|P_jF\|_{L_t^{q'}L^{r'}_x}.
\end{eqnarray}
For $p\geq 2$, $np\ge 6$, $\e\in \s^{n-1}$, we have
\begin{eqnarray}\label{istr2}
2^{-(\frac{n}{2}-\frac{1}{p})j}\Big\|P_j\int_0^t
W_\pm(t-s)F(s)\,ds\Big\|_{L^{p,\infty}_{\mathbf{e}}}\lesssim
\|P_jF\|_{L_t^{q'}L^{r'}_x}.
\end{eqnarray}
Finally, the Strichartz estimate
\begin{eqnarray}\label{istr4}
\Big\|P_j\int_0^t
W_\pm(t-s)F(s)\,ds\Big\|_{L^\infty_tL^{2}_{x}}\lesssim
\|P_jF\|_{L_t^{q'}L^{r'}_x},
\end{eqnarray}
where $1/q'+1/q=1$, and
$1/r'+1/r=1$.
\end{lemma}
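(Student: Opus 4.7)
The plan is to derive each of the four estimates from the linear bounds already established in this section: the smoothing effect (Lemmas \ref{l3} and Corollary \ref{cl3}), the maximal function estimate (Lemma \ref{l2}), the Strichartz estimates (Lemma \ref{str}), and the dual smoothing estimates \eqref{cle31}--\eqref{cle32}. The common recipe for \eqref{istr1}, \eqref{istr2}, \eqref{istr3} is: first prove the analogous bound in which the retarded Duhamel integral $\int_0^t$ is replaced by the full integral $\int_\R$, by factoring
\[
\int_\R W_\pm(t-s)F(s)\,ds \;=\; W_\pm(t)\int_\R W_\pm(-s)F(s)\,ds,
\]
then apply one linear estimate to the outer propagator and a dual linear estimate to the inner $L^2$-valued integral; finally, pass to the retarded version using the rotated Christ--Kiselev lemma (Lemma \ref{A20}), which requires a strict gap between time exponents. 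This gap is precisely what forces the hypotheses $q,r>2$ and $p>2$, as the footnote flags. Estimate \eqref{istr4} is immediate by applying \eqref{str3} directly to $P_jF$.

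For \eqref{istr3}, set $g=\int_\R W_\pm(-s)F(s)\,ds$ and apply the frequency-localized smoothing estimate \eqref{cl31} (with constant $20$ in place of $10$, which is harmless) to the outer semigroup, obtaining a factor $2^{-j/2}\|g\|_{L^2}$; then bound $\|g\|_{L^2}\lesssim\|P_jF\|_{L_t^{q'}L_x^{r'}}$ by dual Strichartz \eqref{str2}. Estimate \eqref{istr2} is analogous, with the maximal function bound \eqref{l21} replacing \eqref{cl31}. In both cases the Christ--Kiselev lemma converts the full integral into the retarded one.

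Estimate \eqref{istr1} is slightly more delicate because the dual smoothing bound \eqref{cle32} requires an angular cutoff of type $Q^{\e_\pm}_{j,10}$, whereas the right-hand side of \eqref{istr1} only involves $P_jF$. To handle this I would first decompose the annulus $\{|\xi|\sim 2^j\}$ into a number of smooth angular sectors, independent of $j$, each supported in a region of the form $\{\xi\cdot\e_{\pm,k}\sim 2^j\}$ for some $\e_k\in\mathbb{S}^{n-1}$, and write $P_jF=\sum_k P_j\tilde{Q}_kF$ with each $\tilde Q_k$ dominated by $Q^{\e_{\pm,k}}_{j,10}$. Applying the Strichartz estimate \eqref{str1} to the outer $W_\pm(t)g$ and \eqref{cle32} piecewise to $g=\sum_k\int_\R W_\pm(-s)P_j\tilde Q_kF(s)\,ds$, then summing over the finitely many directions, yields the non-retarded analogue of \eqref{istr1}; the supremum over $\e\in\mathbb{S}^{n-1}$ on the right absorbs the finite sum. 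Christ--Kiselev then produces \eqref{istr1}.

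The main obstacle throughout is applying the rotated Christ--Kiselev lemma in the anisotropic mixed-norm spaces $L^{p,\infty}_\e$ and $L^{\infty,2}_\e$, in which the time variable is coupled with spatial variables rather than appearing as an outer factor. The required strict separation of time exponents (hence $p,q,r>2$) must be checked case by case, but given Lemma \ref{A20} this verification is essentially routine bookkeeping; the substantive harmonic-analytic content is already contained in the linear estimates quoted above.
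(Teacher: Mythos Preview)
Your proposal is correct and follows essentially the same route as the paper: an angular partition of unity in frequency to reduce \eqref{istr1} to a single direction, then the factoring $\int_\R W_\pm(t-s)F(s)\,ds = W_\pm(t)\int_\R W_\pm(-s)F(s)\,ds$, combining a homogeneous linear estimate on the outer semigroup with a dual estimate on the inner integral, and finally the rotated Christ--Kiselev Lemma \ref{A20} to pass to the retarded integral; the paper omits the details for \eqref{istr2} and \eqref{istr3} as ``similar'' and takes \eqref{istr4} directly from \eqref{str3}, exactly as you do. One small slip: the Christ--Kiselev step for \eqref{istr2} only needs $q'<p$, which already follows from $q>2\le p$, so no extra restriction $p>2$ is needed beyond the stated hypothesis $p\ge 2$.
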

\begin{proof}[{Proof of Lemma \ref{istr}}] For \eqref{istr1}, using a
smooth angular partition of unity in frequency as in Lemma
\ref{le5}, it suffices to prove
\begin{equation}\label{ile32}
\Big\|\int_0^tW_\pm(t-s) P_jF(s)\,ds\Big\|_{L_t^qL^r_x}\leq
C2^{-j/2}\|P_jF\|_{L^{1,2}_\e}.
\end{equation}
with $P_jf$ is frequency localized to the region $\{\xi;\,\, \xi
\cdot \mathbf{e}_\pm\in[2^{j-2},2^{j+2}]\}$. In view of
\eqref{le32}, it suffices to show \eqref{ile32} for $r>2$. By
\eqref{cle31}, we have
\begin{align}\label{ile33}
&\Big\| \int W_\pm(t-s)P_jF(s)\,ds\Big\|_{L_t^qL^r_x}\nonumber\\ \lesssim& \Big\|\int
W_\pm(-s) P_jF(s)\,ds\Big\|_{L^2} \lesssim2^{-j/2}\|P_jf\|_{L^{1,2}_\e},
\end{align}
since $min\{q,r\}>2$, we can apply Lemma \ref{A20} to
\eqref{ile33} then get \eqref{ile32}.

For \eqref{istr3} and \eqref{istr2}, the proofs are similar and
therefore will be omitted. And \eqref{istr4} follows directly from
Strichartz estimate \eqref{str3}.
\end{proof}

\section{Homogeneous case}\label{sect:homo}

In this section, we will prove the linear and nonlinear estimates for dealing with the homogeneous nonlinearity.

\subsection{Function spaces}\label{sect:space}

 We denote
\begin{align*}
&\|P_jf\|_{M^m_j}:= 2^{-(\frac{n}{2}-\frac{1}{m-1})j}\sup_{\mathbf{e}\in
\s^{n-1}}\|P_jf\|_{L^{m-1,\infty}_{\mathbf{e}}},\\
&\|P_jf\|_{S_j}:=2^{j/2}\sup_{\mathbf{e}\in
\s^{n-1}}\|P_{j}Q^{\mathbf{e}_\pm}_{j,20}f\|_{L^{\infty,2}_{\mathbf{e}}},\\
&\|P_jf\|_{T_j}:=\|P_jf\|_{L^\infty_tL^{2}_{x}},
\end{align*}
and define
\begin{align}\label{y}
\|P_jf\|_{Y^m_j}:=\|P_jf\|_{M^m_j}+\|P_jf\|_{S_j}+\|P_jf\|_{T_j}.
\end{align}
By Corollary \ref{cl3} and Lemma \ref{l2}, for $m\ge 3$, $n(m-1)\ge 6$, we have
\begin{eqnarray}\label{l2l3}
\|P_{j}W_\pm (t)\phi\|_{Y^m_j}\lesssim\|\phi\|_{L^{2}}.
\end{eqnarray}
Denote
\begin{equation}\label{no61}
\|P_ju\|_{N_j}:=2^{-j/2}\sup_{\mathbf{e}\in
\s^{n-1}}||P_ju||_{L_{\mathbf{e}}^{1,2}}.
\end{equation}

Now we are ready to define our working spaces. For $\sigma\geq 0$,
define the resolution space
\begin{equation}\label{no5}
\dot Z_{2,1}^\sigma=\{u\in \mathcal{S}'(\mathbb{R}^{n+1}):\|u\|_{\dot
Z_{2,1}^\sigma}=\sum_{j\in\mathbb{Z}}^\infty 2^{\sigma
j}\|P_ju\|_{Y^m_j}<\infty\},
\end{equation}
and the ``nonlinear space''
\begin{equation}\label{no6}
\dot N_{2,1}^\sigma=\{u\in \mathcal{S}'(\mathbb{R}^{n+1}):\|u\|_{\dot
N_{2,1}^\sigma}=\sum_{j\in\mathbb{Z}}^\infty 2^{\sigma
j}\|P_ju\|_{N_j}<\infty\}.
\end{equation}

\quad

\subsection{Linear estimates}
We now give the following linear estimates
for the  solutions of the non-elliptic Schr\"{o}dinger  equation.

\begin{lemma}\label{le1}
Let $m\ge 3$, $n(m-1)\ge 6$, $\sigma\geq 0$ and $\phi\in \dot B^{\sigma}_{2,1}$, then
$W_\pm (t)\phi\in \dot Z_{2,1}^\sigma$ and
\begin{equation*}
\|W_\pm (t)\phi\|_{\dot Z_{2,1}^\sigma}\leq C\|\phi\|_{\dot
B_{2,1}^\sigma}.
\end{equation*}
\end{lemma}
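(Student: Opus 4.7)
The plan is to reduce the claim to the frequency-localized linear estimate \eqref{l2l3} and then sum in $j$ against the Besov weight.

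First I would observe that $P_j$ commutes with the free propagator $W_\pm(t)$, so for each dyadic block one may write $P_j W_\pm(t)\phi = W_\pm(t) P_j\phi$. Then I would apply the three pieces that make up the $Y^m_j$ norm separately to $P_j \phi$: the maximal function estimate gives the $M^m_j$ bound via Lemma \ref{l2} with $p=m-1$ (which is exactly where the hypothesis $n(m-1)\ge 6$ is used to meet the condition $np\ge 6$, and $m\ge 3$ ensures $p\ge 2$); the localized smoothing estimate Corollary \ref{cl3} gives the $S_j$ bound (noting that the cut-off $Q^{\mathbf{e}_\pm}_{j,20}$ only narrows the support, so the same factor $2^{-j/2}$ appears and is absorbed into the $2^{j/2}$ in the definition of $\|\cdot\|_{S_j}$); and the $T_j$ bound is immediate from Plancherel, $\|P_j W_\pm(t)\phi\|_{L^\infty_t L^2_x}=\|P_j\phi\|_{L^2}$. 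Altogether this yields the single-block inequality
\begin{equation*}
\|P_j W_\pm(t)\phi\|_{Y^m_j}\lesssim \|P_j \phi\|_{L^2},
\end{equation*}
uniformly in $j\in\Z$.

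Second, I would multiply by $2^{\sigma j}$ and sum over $j\in\Z$:
\begin{equation*}
\|W_\pm(t)\phi\|_{\dot Z^\sigma_{2,1}}=\sum_{j\in\Z}2^{\sigma j}\|P_j W_\pm(t)\phi\|_{Y^m_j}\lesssim \sum_{j\in\Z}2^{\sigma j}\|P_j\phi\|_{L^2}=\|\phi\|_{\dot B^\sigma_{2,1}},
\end{equation*}
which is exactly the desired bound.

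There is no real obstacle: the work has already been done in Corollary \ref{cl3}, Lemma \ref{l2}, and the $L^2$-isometry of $W_\pm(t)$, and the homogeneous Besov structure $\ell^1_{2^{\sigma j}}(\mathbb Z)$ is compatible with the dyadic $\ell^1$ summation of the $Y^m_j$-estimates. The only small point worth noting is that $\sigma\ge 0$ is not actually needed for the step above — the argument works for any real $\sigma$ — but the stated range suffices for the application to the critical indices $\sigma=s_m,s'_m,n/2+1$ used in the main theorems.
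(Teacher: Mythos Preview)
Your proof is correct and follows essentially the same route as the paper: invoke the block-level estimate \eqref{l2l3} (which the paper records as a consequence of Corollary~\ref{cl3}, Lemma~\ref{l2}, and the $L^2$-isometry of $W_\pm(t)$), then sum in $j$ against $2^{\sigma j}$. The only cosmetic difference is that the paper bounds by $\|\tilde P_j\phi\|_{L^2}$ rather than $\|P_j\phi\|_{L^2}$, which is immaterial; your observation that $\sigma\ge 0$ is not actually used in this step is also accurate.
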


\begin{proof}  From \eqref{l2l3}, we have
\begin{eqnarray}\label{le12}
\|P_{j}W_\pm (t)\phi\|_{Y^m_j}\leq C\|\tilde{P}_j\phi\|_{L^{2}},
\end{eqnarray}
where $\tilde{P}_j=P_{j-1}+P_j+P_{j+1}$. Then, directly from
\eqref{no5} and \eqref{le12}, we have
\begin{equation*}
\begin{split}
\|W_\pm (t)\phi\|_{\dot Z_{2,1}^\sigma}&=\sum_{j\in\mathbb{Z}}2^{\sigma j}\|P_jW_\pm (t)\phi\|_{Y^m_j}\\
&\leq C\sum_{j\in\mathbb{Z}}2^{\sigma
j}\|\tilde{P}_j\phi\|_{L^{2}}\\
&\leq C\|\phi\|_{\dot
B_{2,1}^\sigma},
\end{split}
\end{equation*}
as desired.
\end{proof}

\begin{lemma}\label{te2}
Let $m\ge 3$, $n(m-1)\ge 6$, $\sigma\geq 0$ and $F\in \dot
N_{2,1}^\sigma$, then $ \int_0^tW(t-s)F(s)\,ds\in \dot
Z_{2,1}^\sigma$, and
\begin{equation}\label{ise}
\Big\| \int_0^tW(t-s)F(s)\,ds\Big\|_{\dot Z_{2,1}^\sigma}\leq
C||F||_{\dot N_{2,1}^\sigma}.
\end{equation}
\end{lemma}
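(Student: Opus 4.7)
The plan is to decompose the $Y^m_j$-norm into its three defining pieces and apply the inhomogeneous linear estimates already established in Section \ref{sect:lin esti} term by term. Write $u(t,x) = \int_0^t W_\pm(t-s) F(s)\,ds$. By the definition \eqref{y},
$$\|P_j u\|_{Y^m_j} = \|P_j u\|_{M^m_j} + \|P_j u\|_{S_j} + \|P_j u\|_{T_j},$$
so it suffices to bound each of the three pieces by $\|P_j F\|_{N_j} = 2^{-j/2} \sup_{\e\in \s^{n-1}} \|P_j F\|_{L^{1,2}_\e}$, and then sum in $j$ with the weight $2^{\sigma j}$ using the definitions \eqref{no5} and \eqref{no6}.

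For the maximal function component $\|P_j u\|_{M^m_j}$ one invokes Lemma \ref{le3} with $p = m-1$. The assumptions $m \geq 3$ and $n(m-1) \geq 6$ translate into $p \geq 2$ (respectively $p \geq 3$ when $n=2$, since then $n(m-1)\ge 6$ forces $m\ge 4$), so Lemma \ref{le3} applies and, after taking the supremum over $\e' \in \s^{n-1}$, delivers
$$\|P_j u\|_{M^m_j} = 2^{-(\frac{n}{2}-\frac{1}{m-1})j}\sup_{\e'}\|P_j u\|_{L^{m-1,\infty}_{\e'}} \lesssim 2^{-j/2}\sup_{\e}\|P_jF\|_{L^{1,2}_{\e}}.$$
For the smoothing component $\|P_j u\|_{S_j}$, Corollary \ref{cl4} gives exactly
$$2^{j/2}\|P_j Q^{\e_\pm}_{j,20} u\|_{L^{\infty,2}_{\e}}\lesssim 2^{-j/2}\sup_{\e'}\|P_j F\|_{L^{1,2}_{\e'}},$$
and taking the supremum over $\e$ yields the required bound. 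For the energy component $\|P_j u\|_{T_j}$, Lemma \ref{le5} supplies the same right-hand side. Adding the three inequalities,
$$\|P_j u\|_{Y^m_j} \lesssim 2^{-j/2}\sup_{\e\in \s^{n-1}}\|P_j F\|_{L^{1,2}_\e} = \|P_j F\|_{N_j}.$$

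Multiplying by $2^{\sigma j}$ and summing over $j \in \Z$ gives $\|u\|_{\dot Z^\sigma_{2,1}} \lesssim \|F\|_{\dot N^\sigma_{2,1}}$, which is \eqref{ise}. The proof is thus an assembly step: the real work was done in Section \ref{sect:lin esti}, and the only minor point to verify is that the numerology $m\ge 3$, $n(m-1)\ge 6$ is compatible with the hypotheses of Lemmas \ref{l2} and \ref{le3}. The main conceptual obstacle was already overcome in Lemma \ref{le3}, where the mixed $L^{1,2}_\e \to L^{p,\infty}_{\e'}$ retarded estimate with two possibly distinct directions $\e, \e'$ had to be derived through the residue decomposition of $1/(\tau - |A^{-1}\xi|^2_\pm)$; once that is in hand, no further difficulty arises here.
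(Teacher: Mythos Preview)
Your proof is correct and follows exactly the same approach as the paper: reduce to the frequency-localized estimate $\|P_j u\|_{Y^m_j}\lesssim \|P_jF\|_{N_j}$ and invoke Corollary~\ref{cl4}, Lemma~\ref{le3} (with $p=m-1$), and Lemma~\ref{le5} for the three pieces of $Y^m_j$. You have simply written out in detail what the paper compresses into one sentence, including the check that $m\ge 3$, $n(m-1)\ge 6$ matches the hypotheses of Lemma~\ref{le3}.
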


\begin{proof}
By the definition, it is sufficient to show
\begin{align*}
\Big\| P_j\int_0^tW(t-s)F(s)\,ds\Big\|_{Y^m_{j}}\lesssim||P_jF||_{ N_{j}},
\end{align*}
which follows from Corollary \ref{cl4}, Lemma \ref{le3} and Lemma
\ref{le5} since $m\ge 3$ and $n(m-1)\ge 6$.
\end{proof}

\quad

\subsection{Nonlinear estimates for homogeneous nonlinearity}
In this section we estimate the nonlinear term
$
F(u, \bar{u}, \nabla u, \nabla \bar{u})$ in the space $\dot{N}_{2,1}^{s_m}$.

\begin{lemma}\label{ne}
For $m\geq 3$, $(m-1)n\geq 6$, and $s_m=
\frac{n}{2}+\frac{m-2}{m-1}$ we have
\begin{eqnarray}\label{ne1}
\left\|F(\nabla u, \nabla \bar{u})\right\|_{\dot{N}_{2,1}^{s_m}}\leq
C \|u\|_{\dot{Z}_{2,1}^{s_m}}^{m}
\end{eqnarray}
where $F: \mathbb{C}^{2n}\rightarrow \mathbb{C}$ is a homogeneous
polynomial of degree $m$.
\end{lemma}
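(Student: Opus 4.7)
The strategy is a standard paraproduct decomposition combined with anisotropic Hölder estimates. By multilinearity and the triangle inequality, it suffices to estimate a single monomial $G = \prod_{i=1}^m v^{(i)}$ with $v^{(i)} \in \{\partial_{x_{\ell_i}} u,\, \partial_{x_{\ell_i}} \bar u\}$. I apply a Littlewood-Paley decomposition to each factor,
$$P_j G = \sum_{j_1, \ldots, j_m} P_j \prod_{i=1}^m P_{j_i} v^{(i)},$$
and note that $P_j$ of a product vanishes unless $\max_i j_i \geq j - O(1)$; by symmetry I may assume $j_1 = \max_i j_i$.

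Next, I fix $\e \in \s^{n-1}$ (the direction from the sup defining the $\dot N^{s_m}_{2,1}$ norm) and apply Hölder in $L^{1, 2}_\e = L^1_{x_1} L^2_{\bar x, t}$, placing $P_{j_1} v^{(1)}$ into $L^{\infty, 2}_\e$ and each of the remaining $m-1$ factors into $L^{m-1, \infty}_\e$; the exponents balance since $\tfrac{1}{\infty} + \tfrac{m-1}{m-1} = 1$ and $\tfrac{1}{2} + \tfrac{m-1}{\infty} = \tfrac{1}{2}$. For the low-frequency factors, the definition of the $M^m_{j_i}$ norm gives
$$\|P_{j_i} v^{(i)}\|_{L^{m-1, \infty}_\e} \lesssim 2^{j_i} \|P_{j_i} u\|_{L^{m-1, \infty}_\e} \leq 2^{s_m j_i} \|P_{j_i} u\|_{M^m_{j_i}},$$
since $s_m = (n/2 - 1/(m-1)) + 1$. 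For the high-frequency factor, I aim to prove $\|P_{j_1} v^{(1)}\|_{L^{\infty, 2}_\e} \lesssim 2^{j_1/2} \|P_{j_1} u\|_{Y^m_{j_1}}$ by splitting $P_{j_1} = P_{j_1} Q^{\e_\pm}_{j_1, 20} + P_{j_1} (\Id - Q^{\e_\pm}_{j_1, 20})$: on the first piece, $\partial_{x_\ell}$ is pointwise dominated by $D_{\e_\pm} \sim 2^{j_1}$ since $|\xi \cdot \e_\pm| \sim 2^{j_1}$ there, yielding the bound via the $S_{j_1}$ norm; for the complement I use a smooth angular partition of $\s^{n-1}$ into $O(1)$ caps $C_\alpha$ centered at $\omega_\alpha$, and for each cap I choose the unique $\e^{(\alpha)} \in \s^{n-1}$ with $\e^{(\alpha)}_\pm = \omega_\alpha$ (via the bijection $\e \mapsto \e_\pm$ on $\s^{n-1}$), so that $Q^{\e^{(\alpha)}_\pm}_{j_1, 20}$ is automatic on $C_\alpha$ and the $\sup$ over directions inside $S_{j_1}$ controls the corresponding piece.

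Combining these bounds,
$$\Big\|P_j \prod_i P_{j_i} v^{(i)}\Big\|_{L^{1, 2}_\e} \lesssim 2^{j_1/2} \|P_{j_1} u\|_{Y^m_{j_1}} \prod_{i \geq 2} 2^{s_m j_i} \|P_{j_i} u\|_{M^m_{j_i}}.$$
Multiplying by the weight $2^{s_m j} \cdot 2^{-j/2}$ from the $\dot N^{s_m}_{2, 1}$ norm and summing over $j \leq j_1 + O(1)$ gives a geometric series $\sum_{j \leq j_1} 2^{(s_m - 1/2) j} \lesssim 2^{(s_m - 1/2) j_1}$, which converges because under the hypotheses ($n \geq 2$, $m \geq 3$, $(m-1)n \geq 6$) one has $s_m \geq 3/2$, hence $s_m - 1/2 > 0$. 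The surviving factor $2^{j_1/2}$ combines with $2^{(s_m - 1/2) j_1}$ into $2^{s_m j_1}$, and summing over $j_1, \ldots, j_m$ independently produces $m$ copies of the $\dot Z^{s_m}_{2, 1}$ norm.

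\textbf{Main obstacle.} I expect the most delicate step to be the auxiliary bound $\|P_{j_1} v^{(1)}\|_{L^{\infty, 2}_\e} \lesssim 2^{j_1/2} \|P_{j_1} u\|_{Y^m_{j_1}}$: the $S_{j_1}$ norm only directly dominates the $L^{\infty, 2}_\e$ norm of the $Q^{\e_\pm}_{j_1, 20}$-cut piece, while the complement (where $\xi$ is nearly orthogonal to $\e_\pm$) must be handled via an angular partition of $\s^{n-1}$ and the $\sup$ over directions built into the $S_{j_1}$ norm. This is precisely where the non-elliptic geometry of $\Delta_\pm$, through the bijection $\e \mapsto \e_\pm$, enters the nonlinear analysis in an essential way.
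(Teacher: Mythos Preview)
Your proof has a genuine gap at exactly the point you flagged as the main obstacle: the claimed bound $\|P_{j_1} v^{(1)}\|_{L^{\infty,2}_\e} \lesssim 2^{j_1/2}\|P_{j_1} u\|_{Y^m_{j_1}}$ is not available. The $S_{j_1}$ norm controls $\|P_{j_1}Q^{\e'_\pm}_{j_1,20}u\|_{L^{\infty,2}_{\e'}}$, i.e.\ the smoothing norm \emph{in the same direction $\e'$} that provides the angular localization. In your complement argument you partition into caps $C_\alpha$ and pick $\e^{(\alpha)}$ with $\e^{(\alpha)}_\pm=\omega_\alpha$; the $S_{j_1}$ norm then bounds the $L^{\infty,2}_{\e^{(\alpha)}}$ norm of the $\alpha$-piece, \emph{not} its $L^{\infty,2}_{\e}$ norm for the fixed $\e$ coming from the $\dot N^{s_m}_{2,1}$ weight. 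Since $L^{\infty,2}_{\e}$ and $L^{\infty,2}_{\e^{(\alpha)}}$ are incomparable anisotropic spaces, the sup over directions does not rescue you. In fact the uncut estimate fails already for free solutions: Lemma~\ref{l3} only gains $|\xi\cdot\e_\pm|^{1/2}$, so if $\widehat{P_{j_1}u}$ concentrates where $\xi\cdot\e_\pm$ is small there is no $2^{-j_1/2}$ gain in $L^{\infty,2}_\e$.

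The remedy, and what the paper does, is to decouple the two directions by routing the H\"older estimate through the isotropic space $L^2_{t,x}$. One first splits the $m$-fold product into two groups and estimates
\[
\|P_j(\,\cdot\,)\|_{L^{1,2}_\e}\le \|(\text{group containing }P_{j_1})\|_{L^2_{t,x}}\,\|(\text{other group})\|_{L^{2,\infty}_\e}.
\]
Because $L^2_{t,x}$ is rotation invariant, one may now perform the angular decomposition of $P_{j_1}$, fix $\tilde\e$ with $\xi\cdot\tilde\e_\pm\sim 2^{j_1}$ on the support, and H\"older again as $L^2_{t,x}\le L^{\infty,2}_{\tilde\e}\cdot L^{2,\infty}_{\tilde\e}$; this legitimately invokes $S_{j_1}$ in the direction $\tilde\e$, while the remaining factors land in $L^{m-1,\infty}$ in either $\e$ or $\tilde\e$ and are handled by $M^m_{j_i}$ (which carries a $\sup$ over directions). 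The odd/even parity of $m$ only affects how the two groups are balanced (for even $m$ one splits a middle factor as $|P_{j_{k+1}}v|^{1/2}\cdot|P_{j_{k+1}}v|^{1/2}$). With this modification your summation argument in $j$ goes through unchanged.
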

\begin{proof}[{Proof of Lemma \ref{ne}}]
We can assume that $F(\nabla u, \nabla
\bar{u})=(\partial_{x_1}u)^{m}$. By definition, we have
\begin{eqnarray}\label{st}
\|(\partial_{x_1}u)^{m}\|_{\dot{N}^{s_m}_{2,1}}=\sum_{j\in\mathbb{Z}}2^{(s_m-1/2) j}\sup_{\mathbf{e}\in
\s^{n-1}}\|P_{j}(\partial_{x_1}u)^{m}
\|_{L_{\mathbf{e}}^{1,2}}.
\end{eqnarray}
It is easy to see that
\begin{eqnarray*}
P_{j}(\partial_{x_1}u)^{m}
=\sum_{j_1,\ldots,j_{m}\in
\mathbb{Z}}P_{j}[P_{j_{1}}(\partial_{x_{1}}u)\cdot\ldots\cdot
P_{j_{m}}(\partial_{x_{1}}u)].
\end{eqnarray*}
Furthermore, we have
$$
P_{j}[P_{j_{1}}(\partial_{x_{1}}u)\cdot\ldots\cdot
P_{j_{m}}(\partial_{x_{1}}u)]=0,\text{ unless }\max(j_{1},\ldots,
j_{m})\geq j-C.
$$
Let \[\mathcal{T}^m_j=\{(j_1,\ldots,j_m)\in\mathbb{Z}^{m}: j\leq
\max(j_1,\ldots,j_m)+C\}.\] Then
\begin{eqnarray}\label{el22}
\|P_{j}(\partial_{x_1}u)^{m}
\|_{L_{\mathbf{e}}^{1,2}}\leq
\sum_{(j_1,\ldots,j_{m})\in \mathcal{T}^m_j}\|P_{j}[P_{j_{1}}(\partial_{x_{1}}u)\cdot\ldots\cdot
P_{j_{m}}(\partial_{x_{1}}u)]\|_{L_{\mathbf{e}}^{1,2}}.
\end{eqnarray}
We can assume now $j_{1}=\max(j_1,\ldots,j_{m})$. When $m$ is odd, that is $m=2k+1$ for some $k\in \mathbb{N}$,
we have
\begin{eqnarray*}
&& \|P_{j}[P_{j_{1}}(\partial_{x_{1}}u)\cdot\ldots\cdot
P_{j_{2k+1}}(\partial_{x_{1}}u)]\|_{L_{\mathbf{e}}^{1,2}}\nonumber\\
&\leq&\|P_{j_1}(\partial_{x_1}u)\cdot\ldots\cdot
P_{j_{k+1}}(\partial_{x_1}u)\|_{L^{2}_{t,x}}\\&&\times\|
P_{j_{k+2}}(\partial_{x_1}u)\cdot\ldots\cdot
P_{j_{2k+1}}(\partial_{x_1}u)\|_{L^{2,\infty}_{\mathbf{e}}}\nonumber\\
&\leq&\|P_{j_1}(\partial_{x_1}u)\cdot\ldots\cdot
P_{j_{k+1}}(\partial_{x_1}u)\|_{L^{2}_{t,x}}\prod_{i=k+2}^{2k+1}\|
P_{j_{i}}(\partial_{x_1}u)\|_{L^{2k,\infty}_{\mathbf{e}}}.
\end{eqnarray*}
We can assume
$P_{j_{1}}(\partial_{x_1}u)$ is frequency localized in the region
$\{\xi: \ \xi\cdot \tilde{\mathbf{e}}_\pm \in[2^{j_{1}-2},2^{j_{1}+1}]\}$ for some
  $\tilde{\mathbf{e}}\in
\mathbb{S}^{n-1}$, since finite many such kinds of regions can cover
the annulus $\{\xi: \ |\xi|\sim 2^{j_1}\}$. Then using H\"older's
inequality, we can control the above by
\begin{eqnarray}\label{el23}
&&\|P_{j_1}(\partial_{x_1}u)\|_{L^{\infty,2}_{\tilde{\mathbf{e}}}}\|P_{j_2}(\partial_{x_1}u)\cdot\ldots\cdot
P_{j_{k+1}}(\partial_{x_1}u)\|_{L^{2,\infty}_{\tilde{\mathbf{e}}}}\nonumber\\&&\times\prod_{i=m+2}^{2k+1}\|
P_{j_{i}}(\partial_{x_1}u)\|_{L^{2k,\infty}_{\mathbf{e}}}\nonumber\\
&\leq&\|P_{j_1}(\partial_{x_1}u)\|_{L^{\infty,2}_{\tilde{\mathbf{e}}}}\prod_{i=2}^{k+1}\|
P_{j_{i}}(\partial_{x_1}u)\|_{L^{2k,\infty}_{\tilde{\mathbf{e}}}}\prod_{i=k+2}^{2k+1}\|
P_{j_{i}}(\partial_{x_1}u)\|_{L^{2k,\infty}_{\mathbf{e}}}\nonumber\\
&\leq&C 2^{-j_{1}/2}\|
P_{j_{1}}(\partial_{x_1}u)\|_{S_{j_{1}}}\prod_{i=2}^{2k+1}2^{(\frac{n}{2}-\frac{1}{2k})j_i}\|
P_{j_{i}}(\partial_{x_1}u)\|_{M^{2k+1}_{j_{i}}}\nonumber\\
&\leq& C 2^{j_{1}/2}\|
P_{j_{1}}u\|_{Y^m_{j_{1}}}\prod_{i=2}^{2k+1}2^{(\frac{n}{2}-\frac{1}{m-1}+1)j_i}\|
P_{j_{i}}u\|_{Y^m_{j_{i}}},
\end{eqnarray}
where $m=2k+1$ and $Y^m_j$ norm is defined in \eqref{y}.

When $m$ is even, that is $m=2k$ for some $k\in \N$, then we have
\begin{eqnarray*}
&& \|P_{j}[P_{j_{1}}(\partial_{x_{1}}u) \cdot \ldots \cdot
P_{j_{2k}}(\partial_{x_{1}}u)]\|_{L_{\mathbf{e}}^{1,2}}\nonumber\\
&\leq&\|P_{j_1}(\partial_{x_1}u)\cdot\ldots\cdot
P_{j_{k}}(\partial_{x_1}u)\cdot
|P_{j_{k+1}}(\partial_{x_1}u)|^{1/2}\|_{L^{2}_{t,x}}\\&&\times\||P_{j_{k+1}}(\partial_{x_1}u)|^{1/2}\cdot
P_{j_{k+2}}(\partial_{x_1}u)\cdot\ldots\cdot
P_{j_{2k}}(\partial_{x_1}u)\|_{L^{2,\infty}_{\mathbf{e}}}\nonumber\\
&\leq&\|P_{j_1}(\partial_{x_1}u)\cdot\ldots\cdot
P_{j_{k}}(\partial_{x_1}u)\cdot
|P_{j_{k+1}}(\partial_{x_1}u)|^{1/2}\|_{L^{2}_{t,x}}\\&&\times\|
P_{j_{k+1}}(\partial_{x_1}u)\|^{1/2}_{L^{2k-1,\infty}_{\mathbf{e}}}\prod_{i=k+2}^{2k}\|
P_{j_{i}}(\partial_{x_1}u)\|_{L^{2k-1,\infty}_{\mathbf{e}}}.
\end{eqnarray*}
By the same reason as above, we can assume
$P_{j_{1}}(\partial_{x_1}u)$ is frequency localized in $\{\xi: \ \xi\cdot \tilde{\mathbf{e}}_\pm \in[2^{j_{1}-2},2^{j_{1}+1}]\}$ for some
  $\tilde{\mathbf{e}}\in
\mathbb{S}^{n-1}$.
Using H\"older inequality, we can
control the above by
\begin{eqnarray}\label{el23*}
&&\|P_{j_1}(\partial_{x_1}u)\|_{L^{\infty,2}_{\tilde{\mathbf{e}}}}\|P_{j_2}(\partial_{x_1}u)\cdot\ldots\cdot
P_{j_{k}}(\partial_{x_1}u)\cdot
|P_{j_{k+1}}(\partial_{x_1}u)|^{1/2}\|_{L^{2,\infty}_{\tilde{\mathbf{e}}}}\nonumber\\&&\times\|
P_{j_{k+1}}(\partial_{x_1}u)\|^{1/2}_{L^{2k-1,\infty}_{\mathbf{e}}}\prod_{i=k+2}^{2k}\|
P_{j_{i}}(\partial_{x_1}u)\|_{L^{2k-1,\infty}_{\mathbf{e}}}\nonumber
\end{eqnarray}
By H\"older inequality and definition of $Y^m_j$ norm, we continue with
\begin{eqnarray}\label{el23*}
&\leq&\|P_{j_1}(\partial_{x_1}u)\|_{L^{\infty,2}_{\tilde{\mathbf{e}}}}\prod_{i=2}^{k}\|
P_{j_{i}}(\partial_{x_1}u)\|_{L^{2k-1,\infty}_{\tilde{\mathbf{e}}}}\|
P_{j_{k+1}}(\partial_{x_1}u)\|^{1/2}_{L^{2k-1,\infty}_{\tilde{\e}}}\nonumber\\&&\times\|
P_{j_{k+1}}(\partial_{x_1}u)\|^{1/2}_{L^{2k-1,\infty}_{\mathbf{e}}}\prod_{i=k+2}^{2k}\|
P_{j_{i}}(\partial_{x_1}u)\|_{L^{2k-1,\infty}_{\mathbf{e}}}\nonumber\\
&\leq&C 2^{j_{1}/2}\|
P_{j_{1}}u\|_{Y^m_{j_{1}}}\prod_{i=2}^{2k}2^{(\frac{n}{2}-\frac{1}{m-1}+1)j_i}\|
P_{j_{i}}u\|_{Y^m_{j_{i}}}.
\end{eqnarray}
From \eqref{el22}, \eqref{el23}, and \eqref{el23*} we have
\begin{align}\label{el24}
&  \sup_{\mathbf{e}\in \s^{n-1}}\|P_{j}(\partial_{x_1}u)^{m} \|_{L_{\mathbf{e}}^{1,2}}\nonumber\\
&\lesssim  \sum_{(j_1,\ldots,j_{m})\in \mathcal{T}^m_j} 2^{j_{\max}/2}\|
P_{j_{\max}}u\|_{Y^m_{j_{\max}}}\prod_{j_{i}\neq
j_{\max}}2^{\frac{n}{2}+\frac{m-2}{m-1}}\|
P_{j_{i}}u\|_{Y^m_{j_{i}}}\nonumber\\
&\lesssim \sum_{j_1\geq j-C} 2^{j_{1}/2}\|
P_{j_{1}}u\|_{Y^m_{j_{1}}}\|
u\|_{\dot{Z}_{2,1}^{\frac{n}{2}+\frac{m-2}{m-1}}}^{m-1}.
\end{align}
Then \eqref{ne1} follows from \eqref{st} and \eqref{el24}, since $s_m=\frac{n}{2}+\frac{m-2}{m-1}$ and $F(\nabla u, \nabla
\bar{u})=(\partial_{x_1}u)^{m}$. The proof for general $F$ is similar, thus we omit the details.
\end{proof}

\quad

In order to prove Theorem \ref{t2}, we need the following
estimate.

\begin{lemma}\label{neb}
For $m\geq 3$, $(m-1)n \ge 6$, and $s'_m=\frac{n}{2}-\frac{1}{m-1}$,
we have
\begin{eqnarray}\label{ne1*0}
\left\|u^{m-1} (\lambda\cdot\nabla
u)\right\|_{ \dot{N}_{2,1}^{s'_m}}\lesssim  \|u\|_{
\dot{Z}_{2,1}^{s'_m}}^{m},
\end{eqnarray}
where $\lambda \in \R^n$ is a constant vector.
\end{lemma}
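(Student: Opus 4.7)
The plan is to mimic the multilinear analysis in Lemma \ref{ne}, with two bookkeeping modifications that arise because only one of the $m$ factors carries a derivative. We may assume $\lambda\cdot\nabla u=\partial_{x_1}u$. Paraproduct expansion gives
\begin{align*}
P_j\big(u^{m-1}\partial_{x_1}u\big)=\sum_{(j_1,\ldots,j_m)\in\mathcal T^m_j}P_j\big[P_{j_1}u\cdots P_{j_{m-1}}u\cdot P_{j_m}(\partial_{x_1}u)\big],
\end{align*}
with $\mathcal T^m_j=\{(j_1,\ldots,j_m):\,\max_i j_i\ge j-C\}$. We then split according to the location of the maximal frequency. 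Following Lemma \ref{ne}, the cases of even and odd $m$ are handled in parallel by distributing the intermediate $L^{2,\infty}$-factors between the direction $\tilde{\mathbf e}$ attached to the top-frequency block (for smoothing) and the ambient direction $\mathbf e$ of the $L^{1,2}_{\mathbf e}$ norm, so that each undifferentiated factor lands in $L^{m-1,\infty}$ and is absorbed by the $M^m_{j_i}$ piece of $Y^m_{j_i}$.

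\emph{Case 1: the derivative factor is highest} ($j_m=j_{\max}$). The derivative brings a $2^{j_m}$ which cancels against the $2^{-j_m/2}$ loss incurred by placing $P_{j_m}u$ into $L^{\infty,2}_{\tilde{\mathbf e}}$ through the $S_{j_m}$-norm, producing $2^{j_m/2}\|P_{j_m}u\|_{Y^m_{j_m}}$. An angular partition of unity in frequency lets us assume that $Q^{\tilde{\mathbf e}_\pm}_{j_m,20}$ is trivial on $P_{j_m}u$. The remaining $m-1$ factors contribute $2^{s'_m j_i}\|P_{j_i}u\|_{Y^m_{j_i}}$ apiece. Multiplying by $2^{(s'_m-1/2)j}$, using $j\le j_m+C$ and summing the geometric series in $j$ (which converges because $s'_m>1/2$) restores the weight $2^{s'_m j_m}$ on the top factor, and the remaining $j_i$-sums factorize to $\|u\|_{\dot Z^{s'_m}_{2,1}}^{m-1}$.

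\emph{Case 2: a plain $u$ factor is highest} (say $j_1=j_{\max}$). Now the $S_{j_1}$-gain $2^{-j_1/2}$ sits on an undifferentiated factor, while the derivative contributes $2^{(1+s'_m)j_m}\|P_{j_m}u\|_{Y^m_{j_m}}$ through the $M^m_{j_m}$-norm. Using $j_m\le j_1$ to bound $2^{j_m}\le 2^{j_1}$ transfers the extra power to the top block; after summing $2^{(s'_m-1/2)j}$ in $j\le j_1+C$ one recovers $2^{s'_m j_1}\|P_{j_1}u\|_{Y^m_{j_1}}$, and the remaining $j_i$-sums again factorize to $\|u\|_{\dot Z^{s'_m}_{2,1}}^{m-1}$.

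The main obstacle is Case~2: the derivative sits on a potentially low-frequency factor, so its $2^{j_m}$ cost cannot be absorbed by a corresponding smoothing gain on the same factor. It must instead be paid against the maximum frequency $2^{j_1}$, after which the geometric sum in $j$ converges precisely when $s'_m=\tfrac{n}{2}-\tfrac{1}{m-1}>\tfrac12$; this is equivalent to $(m-1)(n-1)>2$, which is exactly the algebraic content of the hypotheses $m\ge 3$ and $(m-1)n\ge 6$. Once this is in place, the argument closes, and the generalization from $\partial_{x_1}u$ to an arbitrary $\lambda\cdot\nabla u$ is immediate by rotation; the extension to $u^{m-1}$ replaced by polynomials in $(u,\bar u)$ of degree $m-1$ (needed to cover both summands in \eqref{dsb}) requires no new idea, since only the moduli of the factors enter the estimates.
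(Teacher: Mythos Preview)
Your proposal is correct and follows essentially the same route as the paper: a paraproduct expansion, a case split according to whether the derivative sits on the top-frequency block, and in Case~2 the transfer $2^{j_m}\le 2^{j_{\max}}$ to move the derivative cost onto the smoothing factor (the paper encodes this same step by rewriting $P_{j_1}(\partial_{x_1}u)P_{j_2}u=[2^{-j_2}P_{j_1}(\partial_{x_1}u)]\,2^{j_2}P_{j_2}u$ and using $\|2^{-j_2}P_{j_1}(\partial_{x_1}u)\|_{Y^m_{j_1}}\lesssim\|P_{j_1}u\|_{Y^m_{j_1}}$). Your explicit identification of the convergence condition $s'_m>\tfrac12$ with $(m-1)(n-1)>2$ is a helpful remark that the paper leaves implicit.
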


\begin{proof}  The proof  is similar to Lemma \ref{ne}, we only
give the outline. In view of the
proof of Lemma \ref{ne}, it suffices to prove
\begin{eqnarray}\label{el2410}
 \sup_{\mathbf{e}\in \s^{n-1}}\|P_{j}(u^{m-1}( \partial_{x_1}
u)) \|_{L_{\mathbf{e}}^{1,2}}
\lesssim \sum_{j_1\geq j-C} 2^{j_{1}/2}\|
P_{j_{1}}u\|_{Y^m_{j_{1}}}\| u\|_{ \dot{Z}_{2,1}^{s'_m}}^{m-1}.
\end{eqnarray}
Using the notations as in Lemma \ref{ne}, we have
\begin{eqnarray}\label{el22*0}
&& \sup_{\mathbf{e}\in \s^{n-1}}\|P_{j}(u^{m-1}(\partial_{x_1} u))
\|_{L_{\mathbf{e}}^{1,2}}\nonumber\\&=&
\sum_{(j_1,\ldots,j_{m})\in \mathcal{T}^m_j}\sup_{\mathbf{e}\in
\s^{n-1}}\|P_{j}[P_{j_{1}}(\partial_{x_1}u)\cdot P_{j_2} u \cdot\ldots\cdot
P_{j_{m}}u]\|_{L_{\mathbf{e}}^{1,2}}.
\end{eqnarray}
Assume first that $j_1=j_{\max}$, thus $j_1\geq j-C$. In
view of the argument in the proof of Lemma \ref{ne}, we have
\begin{eqnarray}\label{bb0}
&& \|P_{j}[P_{j_{1}}(\partial_{x_1} u)\cdot\ldots\cdot
P_{j_{m}}u]\|_{L_{\mathbf{e}}^{1,2}}\nonumber\\
&\lesssim&  2^{j_1} \sup_{\mathbf{e}\in
\s^{n-1}}\|P_{j_1}Q^{\mathbf{e}_\pm}_{j_1,20}
u\|_{L^{\infty,2}_{\mathbf{e}}}\prod_{i=2}^{m}\sup_{\mathbf{e}\in
\s^{n-1}}\|
P_{j_{i}}u\|_{L^{m-1,\infty}_{\mathbf{e}}}\nonumber\\
&\lesssim&  2^{j_{1}/2}\|
P_{j_{1}}u\|_{Y^m_{j_{1}}}\cdot\prod_{i=2}^{m}2^{s'_{m}j_{i}}\|
P_{j_{i}}u\|_{Y^m_{j_{i}}}.
\end{eqnarray}
Otherwise, we can assume that $j_{2}=j_{\max}$, then we can write
$$
P_{j_{1}}(\partial_{x_1} u)P_{j_{2}}u= [2^{-j_{2}}P_{j_{1}}(\partial_{x_1} u)]2^{j_{2}}
P_{j_{2}}u.
$$
The same argument as before with $\|2^{-j_{2}}P_{j_{1}}(\partial_{x_1}
u)\|_{Y^m_{j_1}}\lesssim\|P_{j_{1}} u\|_{Y^m_{j_1}}$ gives that
\begin{align}\label{bb1}
& \|P_{j}[P_{j_{1}}(\partial_{x_1} u)\cdot P_{j_2}u\cdot\ldots\cdot
P_{j_{m}}u]\|_{L_{\mathbf{e}}^{1,2}}\nonumber\\
\lesssim& \sup_{\mathbf{e}\in
\s^{n-1}}\|2^{-j_{2}}P_{j_{1}}(\partial_{x_1}
u)\|_{L^{m-1,\infty}_{\mathbf{e}}} \cdot 2^{j_{2}}\sup_{\mathbf{e}\in
\s^{n-1}}\| P_{j_{2}}Q^{\mathbf{e}_\pm}_{j_2,20}u
\|_{L^{\infty,2}_{\mathbf{e}}}\nonumber\\
&\qquad\cdot\prod_{i=3}^{m}\sup_{\mathbf{e}\in
\s^{n-1}}\|
P_{j_{i}}u\|_{L^{m-1,\infty}_{\mathbf{e}}}\nonumber\\
\lesssim& 2^{s'_{m}j_{1}}\| 2^{-j_{2}}P_{j_{1}}(\partial_{x_1}
u)\|_{Y^m_{j_{1}}} \cdot 2^{j_{2}/2}\| P_{j_{2}}u\|_{Y^m_{j_{2}}}\cdot\prod_{i=
3}^{m}2^{s'_{m}j_{i}}\| P_{j_{i}}u\|_{Y^m_{j_{i}}}\nonumber\\
\lesssim&  2^{j_{2}/2}\| P_{j_{2}}u\|_{Y^m_{j_{2}}}\cdot\prod_{i\neq
2}^{m}2^{s'_{m}j_{i}}\| P_{j_{i}}u\|_{Y^m_{j_{i}}}.
\end{align}
Then \eqref{el2410} follows from \eqref{el22*0}, \eqref{bb0} and
\eqref{bb1}. Thus we finish the proof.
\end{proof}

\quad

\section{General case}\label{sect:gener}

In this section, we will prove the linear and nonlinear estimates for general DNLS \eqref{ds}.
The main difficulty is the lake of scaling invariance.

\subsection{Function spaces}

In order to prove Theorem \ref{t3}, we introduce the following norms
\begin{align*}
&\|P_ju\|_{\tilde{T}_j}=\|P_ju\|_{L^\infty_t L^{2}_x }+\|P_ju\|_{ L^{2+4/n}_{t,x}},\\
&\|P_ju\|_{\tilde{M}_j}=\sup_{p\in \Z, p\ge 2, np\ge 6}\Big[2^{(\frac{n}{2}-\frac{1}{p})j}\|P_{j}u\|_{L^{p,\infty}_{\mathbf{e}}}\Big].
\end{align*}
Define
\begin{align}\label{y'}
\|P_ju\|_{\tilde{Y}_j}:=\|P_ju\|_{\tilde{M}_j}+\|P_ju\|_{S_j}+\|P_ju\|_{\tilde{T}_j},
\end{align}
 Corollary \ref{cl3}, Lemma \ref{l2} and \ref{str} imply that
\begin{eqnarray}\label{l2l3'}
\|P_{j}W_\pm(t)\phi\|_{\tilde{Y}_j}\leq C\|\phi\|_{L^{2}}.
\end{eqnarray}
Denote
\begin{equation}\label{no61'}
\|u\|_{\tilde{N}_j}:=\inf_{u=v+w}\Big\{||v||_{N_j}+||w||_{L^{\frac{2n+4}{n+4}}}\Big\},
\end{equation}
where the norm $N_j$ is defined in \eqref{no61}.

Now we are ready to define our main spaces. For $\sigma\geq 0$,
define the resolution spaces
\begin{align}\label{no5'}
&\tilde{Z}_{2,1}^\sigma=\{u\in \mathcal{S}(\mathbb{R}^{n+1}):\|u\|_{\tilde{Z}_{2,1}^\sigma}<\infty\},\\
&\|u\|_{\tilde{Z}_{2,1}^\sigma}=\Big(\sum_{j\leq 0}\|P_ju\|^2_{\tilde{Y}_j}\Big)^{1/2}+
\sum_{j\in\mathbb{Z}_+} 2^{\sigma
j}\|P_ju\|_{\tilde{Y}_j},\nonumber
\end{align}
and the ``nonlinear spaces''
\begin{align}\label{no6'}
&\tilde{N}_{2,1}^\sigma=\{u\in \mathcal{S}(\mathbb{R}^{n+1}):\|u\|_{\tilde{N}_{2,1}^\sigma}<\infty\},\\
&\|u\|_{\tilde{Z}_{2,1}^\sigma}=\Big(\sum_{j\leq 0}\|P_ju\|^2_{\tilde{N}_j}\Big)^{1/2}+\sum_{j\in\mathbb{Z}_+} 2^{\sigma
j}\|P_ju\|_{\tilde{N}_j},\nonumber
\end{align}
Our resolution spaces has $l^2$-structure in low frequency part and $l^1$-structure in high frequency part. Since for general nonlinearity, the equation \eqref{ds} has no scaling symmetry, we need measure the low and high frequency parts differently.

\begin{lemma}\label{gc1}
For any admissible  pair  $(q, r)$, if $q\ge 2+4/n$, then
\begin{equation}\label{gc10}
\|u\|_{L^q_t L^r_x}\lesssim\|u\|_{\tilde{Z}_{2,1}^{0}}.
\end{equation}
\end{lemma}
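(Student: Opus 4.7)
The plan is to reduce the estimate to a single Littlewood-Paley block and then resum using the two summation structures ($\ell^2$ on low frequencies, $\ell^1$ on high frequencies) built into $\|\cdot\|_{\tilde{Z}^0_{2,1}}$. The key single-block bound I will establish is
\[
\|P_j u\|_{L^q_t L^r_x}\lesssim \|P_j u\|_{\tilde T_j}\le \|P_j u\|_{\tilde Y_j},
\]
obtained by interpolating between the two endpoints that sit inside $\tilde T_j$, namely $L^\infty_t L^2_x$ and $L^{2+4/n}_{t,x}$.

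For the single-block step, I set $\theta=1-(2+4/n)/q$, which lies in $[0,1]$ precisely when $q\ge 2+4/n$. H\"older in $x$ gives $\|f(t)\|_{L^r_x}\le \|f(t)\|_{L^2_x}^{\theta}\|f(t)\|_{L^{2+4/n}_x}^{1-\theta}$ for the exponent relation $1/r=\theta/2+(1-\theta)/(2+4/n)$, and a direct substitution shows that, with this choice of $\theta$, this relation is equivalent to the admissibility identity $2/q=n(1/2-1/r)$. A second H\"older in $t$, with conjugate pair $(\infty,q)$, then yields
\[
\|f\|_{L^q_t L^r_x}\le \|f\|_{L^\infty_t L^2_x}^{\theta}\|f\|_{L^{2+4/n}_{t,x}}^{1-\theta}.
\]
Applied to $f=P_j u$ this gives the desired block bound.

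With the block bound in hand, the high-frequency piece is handled by the triangle inequality,
\[
\Big\|\sum_{j>0}P_j u\Big\|_{L^q_t L^r_x}\le \sum_{j>0}\|P_j u\|_{L^q_t L^r_x}\lesssim \sum_{j>0}\|P_j u\|_{\tilde Y_j}.
\]
For the low-frequency piece I use the Littlewood-Paley square function theorem (applicable since $2\le r<\infty$ and $2<q<\infty$ in this range) combined with Minkowski's inequality in both variables (valid as $q,r\ge 2$):
\[
\Big\|\sum_{j\le 0}P_j u\Big\|_{L^q_t L^r_x}\lesssim \Big\|\Big(\sum_{j\le 0}|P_j u|^2\Big)^{1/2}\Big\|_{L^q_t L^r_x}\le \Big(\sum_{j\le 0}\|P_j u\|_{L^q_t L^r_x}^2\Big)^{1/2}\lesssim \Big(\sum_{j\le 0}\|P_j u\|_{\tilde Y_j}^2\Big)^{1/2}.
\]
The endpoint case $q=\infty$, $r=2$ is handled separately by Plancherel applied fiberwise in $t$, which yields both the $\ell^2$ low-frequency bound and (via $\ell^1\hookrightarrow \ell^2$) the high-frequency bound.

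The only subtle point is the exponent bookkeeping in the single-block step: one must check that the $\theta$ forced by wanting $L^\infty_t L^2_x$ and $L^{2+4/n}_{t,x}$ as the two interpolation endpoints is consistent with the admissibility relation, and that the restriction $q\ge 2+4/n$ appears exactly as the condition $\theta\ge 0$. Both checks are purely algebraic; once made, the rest of the argument is a standard Littlewood-Paley resummation matched to the structure of $\tilde Z^0_{2,1}$.
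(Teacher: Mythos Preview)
Your proposal is correct and follows essentially the same approach as the paper: interpolate between $L^\infty_tL^2_x$ and $L^{2+4/n}_{t,x}$ to get the block bound $\|P_ju\|_{L^q_tL^r_x}\lesssim\|P_ju\|_{\tilde T_j}$, then resum via the triangle inequality on high frequencies and the Littlewood--Paley square function plus Minkowski on low frequencies. Your write-up is in fact more detailed than the paper's---you explicitly identify the interpolation parameter $\theta=1-(2+4/n)/q$ and verify the admissibility bookkeeping, whereas the paper simply says ``by interpolation''; your separate treatment of the endpoint $(q,r)=(\infty,2)$ is harmless but unnecessary, since the square-function/Minkowski argument already covers $r=2$ and $q=\infty$.
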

\begin{proof}[{ Proof of Lemma \ref{gc1}}]
Since $q\ge 2+4/n$, by interpolation, we have
\[\|P_{j}u\|_{L^q_t L^r_x}\lesssim \|P_{j}u\|_{\tilde{T}_j},\]
and so
\[\|P_{\ge 1}u\|_{L^q_t L^r_x}\lesssim \sum_{j\ge 1}\|P_{j}u\|_{L^q_t L^r_x}\lesssim \sum_{j\ge 1}\|P_{j}u\|_{\tilde{T}_j}.\]
For low frequency, by Littlewood-Paley square function theorem, we have
\[\|P_{\le 1}u\|_{L^q_t L^r_x}\thicksim \Big\|\Big(\sum_{j\le 1} |P_{j}u|^2\Big)^{1/2}\Big\|_{L^q_t L^r_x}\lesssim \Big(\sum_{j\le 1}\|P_{j}u\|^2_{\tilde{T}_j}\Big)^{1/2}.\]
Thus we finish the proof.
\end{proof}

\quad

\subsection{Linear estimates}

We have the following linear estimate
for the free non-elliptic Schr\"{o}dinger evolution,
\begin{lemma}\label{le1'}
Let $\sigma\geq 0$ and $\phi\in \mathcal{S}(\mathbb{R}^{n})$, then
$W_\pm (t)\phi\in  \tilde{Z}_{2,1}^{\sigma}$ and
\begin{equation*}
\|W_\pm (t)\phi\|_{ \tilde{Z}_{2,1}^{\sigma}}\leq C\|\phi\|_{{B}_{2,1}^{\sigma}}.
\end{equation*}
\end{lemma}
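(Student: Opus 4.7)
The plan is to reduce everything to the frequency-localized linear estimate \eqref{l2l3'} and then sum over dyadic scales, respecting the different $\ell^2$ (low frequency) and $\ell^1$ (high frequency) structures that appear in the definition of $\tilde{Z}_{2,1}^{\sigma}$.

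First I would observe that since $P_j = P_j \tilde{P}_j$ and $\tilde{P}_j$ commutes with the semigroup $W_\pm(t)$, applying \eqref{l2l3'} to $\tilde{P}_j \phi$ in place of $\phi$ gives the localized bound
\[
\|P_j W_\pm(t)\phi\|_{\tilde{Y}_j} = \|P_j W_\pm(t)\tilde{P}_j\phi\|_{\tilde{Y}_j} \lesssim \|\tilde{P}_j \phi\|_{L^2},
\]
uniformly in $j \in \mathbb{Z}$. This is the only analytic input; everything else is bookkeeping.

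Next, for the high-frequency piece I would simply sum in $j$ with the weight $2^{\sigma j}$:
\[
\sum_{j \in \mathbb{Z}_+} 2^{\sigma j} \|P_j W_\pm(t)\phi\|_{\tilde{Y}_j} \lesssim \sum_{j \in \mathbb{Z}_+} 2^{\sigma j} \|\tilde{P}_j \phi\|_{L^2} \lesssim \|\phi\|_{B^{\sigma}_{2,1}},
\]
where the last step uses that the definition of $B^{\sigma}_{2,1}$ already contains the $\ell^1$ sum over $j \geq 1$ and that $\tilde{P}_j$ differs from $P_j$ by two neighboring blocks.

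For the low-frequency piece I need the $\ell^2$ square sum over $j \leq 0$. Here I would use the localized bound together with Plancherel (or Littlewood--Paley orthogonality) to write
\[
\Big(\sum_{j \leq 0} \|P_j W_\pm(t)\phi\|^2_{\tilde{Y}_j}\Big)^{1/2} \lesssim \Big(\sum_{j \leq 0} \|\tilde{P}_j \phi\|^2_{L^2}\Big)^{1/2} \lesssim \|P_{\leq 1}\phi\|_{L^2} \lesssim \|\phi\|_{B^{\sigma}_{2,1}},
\]
the middle inequality being an almost-orthogonality fact for the $\tilde P_j$ decomposition of a function supported in frequencies $|\xi| \lesssim 1$, and the last inequality coming from the $\|P_{\leq 0}u\|_{L^2}$ contribution to the $B^{\sigma}_{2,1}$ norm. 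Combining the two estimates gives the desired bound on $\|W_\pm(t)\phi\|_{\tilde{Z}_{2,1}^{\sigma}}$. There is essentially no obstacle here since \eqref{l2l3'} already packages together the smoothing, maximal-function and Strichartz estimates that make up the $\tilde Y_j$ norm; the only thing to be careful about is matching the $\ell^2$/$\ell^1$ summation structure to the corresponding piece of the Besov norm, as done above.
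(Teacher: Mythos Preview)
Your proposal is correct and follows the same approach as the paper, which simply says the result ``follows from \eqref{l2l3'} and similar argument as in Lemma \ref{le1}.'' In fact you are more careful than the paper: you explicitly separate and justify the $\ell^2$ low-frequency and $\ell^1$ high-frequency summations required by the definition of $\tilde{Z}_{2,1}^{\sigma}$, whereas the paper leaves this bookkeeping implicit.
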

\begin{proof}[Proof of Lemma \ref{le1'}]
This follows from \eqref{l2l3'} and similar argument as in Lemma \ref{le1}.
\end{proof}

\quad

\begin{lemma}\label{te2'}
Let $\sigma\geq 0$ and $F\in \tilde{N}_{2,1}^{\sigma}$ then $
\int_0^tW_\pm (t-s)F(s)\,ds\in \tilde{Z}_{2,1}^{\sigma}$ and
\begin{equation}\label{ise'}
\Big\| \int_0^t W_\pm (t-s)F(s)\,ds\Big\|_{\tilde{Z}_{2,1}^{\sigma}}\leq
C||F||_{\tilde{N}_{2,1}^{\sigma}}.
\end{equation}
\end{lemma}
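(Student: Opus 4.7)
The plan is to reduce to a frequency-localized retarded estimate and then exploit the splitting built into the definition of the norm $\tilde N_j$. Concretely, the first step will be to prove
\begin{equation*}
  \Big\|P_j\int_0^t W_\pm(t-s)F(s)\,ds\Big\|_{\tilde Y_j} \lesssim \|P_jF\|_{\tilde N_j},
\end{equation*}
uniformly in $j\in\Z$. Once this is in hand, the conclusion follows mechanically: multiplying by the weight $2^{\sigma j}$ and summing (with $\ell^2$ for $j\le 0$ and $\ell^1$ for $j\ge 1$, exactly as in the definition \eqref{no5'} of $\tilde Z_{2,1}^\sigma$ and \eqref{no6'} of $\tilde N_{2,1}^\sigma$) gives \eqref{ise'}. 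Note that the pair $(2+4/n,\,2+4/n)$ is Strichartz-admissible, so its dual is $(2(n+2)/(n+4),\,2(n+2)/(n+4))$, which is precisely the Lebesgue exponent entering the definition \eqref{no61'} of $\tilde N_j$.

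For the frequency-localized estimate, I would use the infimum structure of $\tilde N_j$: fix an arbitrary decomposition $P_jF=v+w$ with $v\in L^{1,2}_{\mathbf{e}}$-type and $w\in L^{2(n+2)/(n+4)}_{t,x}$. For the $v$-piece, all four components of the $\tilde Y_j$ norm are already controlled by the $N_j$ norm of $v$: the $S_j$ component comes from Corollary \ref{cl4}, the $\tilde M_j$ component from Lemma \ref{le3} (applied for every admissible $p$ with $np\ge 6$), the $L^\infty_tL^2_x$ part of $\tilde T_j$ from Lemma \ref{le5}, and the $L^{2+4/n}_{t,x}$ part from the first inhomogeneous Strichartz estimate \eqref{istr1} (using the admissible pair mentioned above). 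For the $w$-piece, all four components are controlled by $\|w\|_{L^{2(n+2)/(n+4)}_{t,x}}$: the $\tilde M_j$ part via \eqref{istr2}, the $S_j$ part via \eqref{istr3}, the $L^\infty_tL^2_x$ part via \eqref{istr4}, and the $L^{2+4/n}_{t,x}$ part via the standard Strichartz estimate \eqref{str3} in Lemma \ref{str}. Summing these two contributions and taking the infimum over decompositions yields the desired bound.

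Finally, to assemble the full norm, I would treat the high and low frequencies separately. For $j\ge 1$ one simply weights by $2^{\sigma j}$ and sums in $\ell^1$. For $j\le 0$ one weights trivially and takes the $\ell^2$ sum; here it is important that each of the building-block linear estimates invoked above is frequency-localized and hence Plancherel-compatible, so the $\ell^2$ aggregation passes through without loss. The main (and essentially only) technical point is the need to choose the splitting $P_jF=v+w$ on each dyadic shell \emph{before} summing in $j$; this is legitimate because the target spaces $\tilde Y_j$ are themselves defined via a sum of norms, so the decomposition can be made independently at each scale and then recombined. With that observation, the proof reduces to a bookkeeping exercise over the list of linear estimates already established in Section \ref{sect:lin esti}.
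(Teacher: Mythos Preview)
Your proposal is correct and follows essentially the same route as the paper's own proof: reduce to the dyadic estimate $\|P_j\int_0^t W_\pm(t-s)F(s)\,ds\|_{\tilde Y_j}\lesssim \|P_jF\|_{\tilde N_j}$, handle the $N_j$-piece via Corollary~\ref{cl4}, Lemma~\ref{le3}, Lemma~\ref{le5} and \eqref{istr1}, and handle the $L^{(2n+4)/(n+4)}_{t,x}$-piece via \eqref{istr2}--\eqref{istr4} together with the standard retarded Strichartz estimate \eqref{str3}. Your write-up is in fact more explicit than the paper's (which simply cites Lemma~\ref{istr} for \eqref{te2'2} without singling out that the $L^{2+4/n}_{t,x}$ component needs \eqref{str3}); the remark about ``Plancherel-compatibility'' for the $\ell^2$ low-frequency sum is unnecessary, since the dyadic bound $\|P_j\,\cdot\,\|_{\tilde Y_j}\lesssim\|P_jF\|_{\tilde N_j}$ already passes to any $\ell^p$ sum directly.
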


\begin{proof}[Proof of Lemma \ref{te2'}]
By the definition, it is sufficient to show
\begin{align}\label{te2'1}
\Big\| P_j\int_0^t W_\pm (t-s)F(s)\,ds\Big\|_{\tilde{Y}_{j}}\leq
C||P_j F||_{N_j},
\end{align}
and
\begin{align}\label{te2'2}
\Big\| P_j\int_0^t W_\pm (t-s)F(s)\,ds\Big\|_{\tilde{Y}_{j}}\leq
C||P_j F||_{L_{t,x}^{(2n+4)/(n+4)}}.
\end{align}
Corollary \ref{cl4}, Lemma \ref{le3}, Lemma \ref{le5} and Lemma
\ref{istr} imply \eqref{te2'1}, and \eqref{te2'2} follows from Lemma
\ref{istr}.
\end{proof}

\quad

\subsection{Nonlinear estimate for general nonlinearity}

Since for our solution spaces $X$, $\|u\|_{X}=\|\bar{u}\|_{X}$,
without loss of generality we may assume that
\begin{align*}
 F(u,\bar{u},
\nabla u, \nabla\bar{u}) = F(u, \nabla u):= \sum_{m\le
\kappa+|\nu| <\infty} c_{\kappa \nu} u^\kappa (\nabla u)^{\nu},
\end{align*}
where $(\nabla u)^\nu = u^{\nu_1}_{x_1}...u^{\nu_n}_{x_n} $.
here $m \in \mathbb{N}$, $ m\ge 3$, $n\ge 2$, $\nu\in \mathbb{Z}_+^{n}$.

\begin{lemma}\label{nec}
For $m\geq 3$, $(m-1)n\geq 6$, we have
\begin{align}\label{ne1*}
&\Big\|F(u, \nabla u)\Big\|_{\tilde{N}_{2,1}^{n/2+1}}\lesssim \sum_{m\le
\kappa+|\nu| <\infty}|c_{\kappa \nu}|\cdot\|u\|_{\tilde{Z}_{2,1}^{n/2+1}}^{\kappa+|\nu|}.
\end{align}
\end{lemma}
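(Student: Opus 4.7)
By the triangle inequality in $\tilde{N}^{n/2+1}_{2,1}$ applied to the expansion of $F$, it suffices to treat one monomial $G = u^\kappa(\nabla u)^\nu$ of total degree $k := \kappa+|\nu| \geq m$, and to show
\[
\|G\|_{\tilde{N}^{n/2+1}_{2,1}} \lesssim \|u\|_{\tilde{Z}^{n/2+1}_{2,1}}^{k}.
\]
The Besov-type norm on the left splits at the frequency threshold $j=0$: the high-frequency part has an $\ell^1$ structure with Besov weight $2^{(n/2+1)j}$, while the low-frequency part has an $\ell^2$ structure. The two regimes need different treatments.

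\emph{High-frequency output $(j \geq 1)$.} I would estimate $\|P_j G\|_{\tilde{N}_j} \leq \|P_j G\|_{N_j}$ and recycle the multilinear scheme of Lemmas \ref{ne}--\ref{neb}: Littlewood--Paley decompose each factor, restrict to multi-indices with $j \leq \max_i j_i + C$, and apply the anisotropic H\"older split
\[
L^{1,2}_{\mathbf{e}} \hookleftarrow L^{\infty,2}_{\mathbf{e}} \cdot (L^{k-1,\infty}_{\mathbf{e}})^{k-1}.
\]
The maximal-frequency factor is controlled by the $S$-norm (with an extra $2^{j_{\max}}$ if that factor carries a derivative) and the remaining $k-1$ factors by the $\tilde{M}$-norm; since every per-factor scaling exponent stays strictly below $n/2+1$, the Besov weight absorbs the losses, the $j$-sum collapses via $j \leq j_{\max}+C$, and we reach $\sum_{j \geq 1} 2^{(n/2+1)j} \|P_j G\|_{\tilde{N}_j} \lesssim \|u\|^{k}_{\tilde{Z}^{n/2+1}_{2,1}}$.

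\emph{Low-frequency output $(j \leq 0)$.} Here I would use the other half of the infimum defining $\tilde{N}_j$, namely $\|P_j G\|_{\tilde{N}_j} \leq \|P_j G\|_{L^{(2n+4)/(n+4)}_{t,x}}$. Set $p = (2n+4)/(n+4) \in (1,2)$ and choose an auxiliary exponent $p_0 \in (1,p)$. The strict inequality $k \geq m > 1+4/n$, which holds because $m \geq 4$ for $n = 2$ and $m \geq 3$ for $n \geq 3$, guarantees that the off-Strichartz pair $(kp, kp_0)$ lies inside the interpolation triangle with vertices $(\infty,2)$, $(2+4/n,2+4/n)$, $(\infty,\infty)$, so that
\[
\|u\|_{L^{kp}_t L^{kp_0}_x} + \|\nabla u\|_{L^{kp}_t L^{kp_0}_x} \lesssim \|u\|_{\tilde{Z}^{n/2+1}_{2,1}},
\]
where the $L^{2+4/n}_{t,x}$ endpoint comes from Lemma \ref{gc1} and the $L^\infty_{t,x}$ endpoint from Bernstein applied to the $L^\infty_t L^2_x$ piece of $\tilde{T}_j$. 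H\"older in both variables then yields $\|G\|_{L^p_t L^{p_0}_x} \lesssim \|u\|^{k}_{\tilde{Z}^{n/2+1}_{2,1}}$, and Bernstein in $x$ produces the decay factor
\[
\|P_j G\|_{L^p_{t,x}} \lesssim 2^{n(1/p_0 - 1/p)j}\,\|G\|_{L^p_t L^{p_0}_x}, \qquad n(1/p_0 - 1/p) > 0,
\]
so that $\bigl(\sum_{j \leq 0} \|P_j G\|^2_{L^{p}_{t,x}}\bigr)^{1/2}$ is dominated by a convergent geometric series and is also $\lesssim \|u\|^k_{\tilde Z^{n/2+1}_{2,1}}$.

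\emph{Main obstacle.} The principal technical difficulty is reconciling the $\ell^2$ bookkeeping of $\tilde{N}^{n/2+1}_{2,1}$ on low frequencies with the $\ell^1$-Besov multilinear machinery that is natural for Lemmas \ref{ne}--\ref{neb}; standard Littlewood--Paley square-function arguments go the wrong way for $p<2$. The Bernstein gain $2^{n(1/p_0-1/p)j}$ in the low-frequency step is exactly what converts the otherwise divergent $j\to-\infty$ sum into a geometric series, and it is available only thanks to the strict margin $k > 1 + 4/n$ left by the hypotheses on $m$ and $n$.
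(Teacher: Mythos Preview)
Your proposal is correct and runs parallel to the paper's argument, but the two are organized differently. The paper splits cases according to whether the monomial carries a derivative: for $|\nu|\neq 0$ it bounds $\|P_jG\|_{N_j}$ via the smoothing/maximal-function H\"older scheme (essentially your high-frequency argument) for \emph{all} output frequencies $j$, while for $|\nu|=0$ it bounds $\|P_jG\|_{L^{(2n+4)/(n+4)}_{t,x}}$ by a Strichartz-type H\"older split (again for all $j$). You instead split on the \emph{output frequency} ($j\ge 1$ versus $j\le 0$), using $N_j$ at high frequencies and the Strichartz component of $\tilde N_j$ at low frequencies, uniformly in $|\nu|$. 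Your low-frequency step is actually tighter: the Bernstein gain $2^{n(1/p_0-1/p)j}$ produces geometric decay as $j\to -\infty$, which makes the $\ell^2_{j\le 0}$ sum in $\tilde N^{n/2+1}_{2,1}$ trivially convergent; the paper's dyadic bounds \eqref{el241} and \eqref{el2411}, read literally, only yield $\|P_jG\|\lesssim \sum_{j_1\ge j-C}(\cdots)$ with no decay in $j$, so the passage to the $\ell^2$ low-frequency norm is left implicit there. Your interpolation-triangle argument (between $L^\infty_tL^2_x$, $L^{2+4/n}_{t,x}$, and $L^\infty_{t,x}$) is also more streamlined than the paper's direct H\"older split with a separate case analysis on $n$, though both ultimately exploit the same strict margin $k>1+4/n$ guaranteed by the hypotheses on $m$ and $n$.
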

\begin{proof}[{Proof of Lemma~\ref{nec}}]
Without loss of generality, it suffices to show
\begin{align}\label{nel1}
&\|u^\kappa (\partial_{x_1}u)^{|\nu|}\|_{\tilde{N}_{2,1}^{n/2+1}}\lesssim \|u\|_{\tilde{Z}_{2,1}^{n/2+1}}^{\kappa+|\nu|}.
\end{align}
In view of the definition, it suffices to prove that for $|\nu|\neq0$,
\begin{eqnarray}\label{el241}
 \sup_{\mathbf{e}\in \s^{n-1}}\|P_{j}(u^{\kappa}(\partial_{x_1} u)^{|\nu|}) \|_{L_{\mathbf{e}}^{1,2}}
\leq C\sum_{j\leq j_{1}+C} 2^{j_{1}/2}\|
P_{j_{1}}u\|_{\tilde{Y}_{j_{1}}}\| u\|_{\tilde{Z}_{2,1}^{n/2+1}}^{\kappa+|\nu|-1},
\end{eqnarray}
and for $|\nu|=0$,
\begin{eqnarray}\label{el2411}
\|P_{j}(u^{\kappa}) \|_{L_{t,x}^{(2n+4)/(n+4)}}
\leq C \sum_{j\leq j_{1}+C} \|
P_{j_{1}}u\|_{\tilde{Y}_{j_{1}}}\| u\|_{\tilde{Z}_{2,1}^{n/2+1}}^{\kappa-1}.
\end{eqnarray}

Now we begin to consider \eqref{el241}, let
\begin{eqnarray*}\tilde{s}_{m,j}=
\left \{\begin{array}{ll}
(1+\frac{n}{2}) j \text{ if $j\geq 0$}\\
s'_{m}j \text{ if $j\leq -1$},
\end{array}
\right.
\end{eqnarray*}
where $s'_m=n/2-1/(m-1)$, since $s'_{m}>0$, we have
\[
\sum_{j\in
\mathbb{Z}}2^{\tilde{s}_{m,j}}\|P_ju\|_{\tilde{Y}_j}\lesssim \|u\|_{\tilde{Z}_{2,1}^{\frac{n}{2}+1}}.
\]
Noticing that $s'_m\le s'_{\tilde{m}} < n/2$ for $\tilde{m}\ge m$, in view of the definition we have
\begin{equation}\label{el242}
  \begin{split}
 \|P_{j_{i}}(\partial_{x_1} u)\|_{L^{\tilde{m}-1,\infty}_{\e}}\lesssim
 2^{(s'_{\tilde{m}}+1)j_{i}}\| P_{j_{i}}u\|_{\tilde{M}_{j_{i}}}\lesssim
 2^{\tilde{s}_{m,j_{i}}}\| P_{j_{i}}u\|_{\tilde{Y}_{j_{i}}},
\\
\|P_{j_{i}}(u)\|_{L^{\tilde{m}-1,\infty}_{\e}}\lesssim
 2^{s'_{\tilde{m}} j_{i}}\| P_{j_{i}}u\|_{\tilde{M}_{j_{i}}}\lesssim
 2^{\tilde{s}_{m,j_{i}}}\| P_{j_{i}}u\|_{\tilde{Y}_{j_{i}}},
  \end{split}
\end{equation}
which means that $u$ and $\partial_{x_1} u$ in $L^{\tilde{m}-1,\infty}_{\e}$ have the same
upper bound. Let $\tilde{u}\in \{u, \partial_{x_1} u\}$, thus we have
$\|P_{j_{i}}(\tilde{u})\|_{L^{\tilde{m}-1,\infty}_{\tilde{\e}}}\leq
C2^{\tilde{s}_{m,j_{i}}}\| P_{j_{i}}u\|_{\tilde{Y}_{j_{i}}}$, and
\begin{align}\label{el22*}
&  \sup_{\mathbf{e}\in \s^{n-1}}\|P_{j}(u^{\kappa}(\partial_{x_1} u)^{|\nu|})(s)
\|_{L_{\mathbf{e}}^{1,2}}\nonumber\\
&\leq
\sum_{(j_1,\ldots,j_{\kappa+|\nu|})\in T_{j}^{\kappa+|\nu|}}\sup_{\mathbf{e}\in
\s^{n-1}}\|P_{j}[P_{j_{1}}(\tilde{u})\cdot\ldots\cdot
P_{j_{\kappa+|\nu|}}(\tilde{u})]\|_{L_{\mathbf{e}}^{1,2}}.
\end{align}
By symmetry, we can assume $j_1=j_{\max}$. Furthermore, we can assume
$P_{j_{1}}(\tilde{u})=P_{j_{1}}(\partial_{x_1} u)$, which is the worst case.
In view of the argument in Lemma \ref{ne} and \eqref{el242},
we have
\begin{align}\label{bb}
& \|P_{j}[P_{j_{1}}(\tilde{u})\cdot\ldots\cdot
P_{j_{\kappa+|\nu|}}(\tilde{u})]\|_{L_{\mathbf{e}}^{1,2}}\nonumber\\
&\lesssim  2^{j_1}\sup_{\mathbf{e}\in
\s^{n-1}}\|P_{j_1}Q_{j,20}^{\e_{\pm}}
u\|_{L^{\infty,2}_{\mathbf{e}}}\prod_{i=2}^{\kappa+|\nu|}\sup_{\mathbf{e}\in
\s^{n-1}}\|
P_{j_{i}}\tilde{u}\|_{L^{\kappa+|\nu|-1,\infty}_{\mathbf{e}}}\nonumber\\
&\lesssim  2^{j_{1}/2}\|
P_{j_{1}}u\|_{\tilde{Y}_{j_{1}}}\cdot\prod_{i=2}^{\kappa+|\nu|}2^{\tilde{s}_{m,j_{i}}}\|
P_{j_{i}}u\|_{\tilde{Y}_{j_{i}}}.
\end{align}
Then \eqref{el241} follows from \eqref{el22*} and \eqref{bb}.

Now we turn to \eqref{el2411} and we only consider the case $n\ge 4$, since for $n=2,3$ the proof is similar. By H\"older inequality and \eqref{gc10}, we have
\begin{align}\label{el24111}
& \sum_{(j_1,\ldots,j_{\kappa})\in T_{j}^{\kappa}} \|P_{j}[P_{j_{1}}u\cdot\ldots\cdot
P_{j_{\kappa}}u] \|_{L_{t,x}^{(2n+4)/(n+4)}}\nonumber\\
& \lesssim\sum_{j\le j_1 +C}\sum_{(j_2,\ldots,j_{\kappa})\in \Z^{\kappa-1}}\|P_{j_{1}}u\cdot\ldots\cdot
P_{j_{\kappa}}u \|_{L_{t,x}^{(2n+4)/(n+4)}}\nonumber\\
&\lesssim \sum_{j\le j_1 +C}\|P_{j_{1}}u \|_{L_{t,x}^{2+4/n}}\sum_{(j_2,\ldots,j_{\kappa})\in \Z^{\kappa-1}}\|P_{j_{2}}u\|_{L_{t,x}^q}\prod_{i=3}^{\kappa}\|P_{j_{i}}u\|^{\kappa-2}_{L_{t,x}^\infty}\nonumber\\
& \lesssim \sum_{j\le j_1 +C}\|P_{j_{1}}u \|_{\tilde{Y}_{j_{1}}}\sum_{j_2\in \Z}\|P_{j_{2}}u\|_{L_{t,x}^q}\sum_{(j_3,\ldots,j_{\kappa})\in \Z^{\kappa-2}}\prod_{i=3}^{\kappa}\|P_{j_{i}}u\|^{\kappa-2}_{L_{t,x}^\infty}.
\end{align}
where $q$ satisfies
\[\frac{1}{2+4/n}+\frac{1}{q}=\frac{n+4}{2n+4}.\]
By \eqref{gc10} we have
\[\sum_{j_2\in \Z}\|P_{j_{2}}u\|_{L_{t,x}^{2+4/n}}\lesssim \|u\|_{\tilde{Z}^{0}},\]
and
\begin{align}\label{el24112}
\sum_{j\in \Z}\|P_{j}u\|_{L_{t,x}^\infty}\lesssim \sum_{j\in \Z}2^{nj/2}
\|P_{j}u\|_{L_{t}^\infty L^2_x}\lesssim\|u\|_{\tilde{Z}^{n/2+1}}.
\end{align}
Since $n\ge 4$, so $2+4/n\le q \le \infty$, by interpolation we have
\begin{align}\label{el24113}
\sum_{j_2\in \Z}\|P_{j_{2}}u\|_{L_{t,x}^q}\lesssim \|u\|_{\tilde{Z}^{n/2+1}}.
\end{align}
Thus \eqref{el2411} follows from \eqref{el24111} since \eqref{el24112} and \eqref{el24113}.
\end{proof}

\section{Proof of the main results}\label{sect:proof_main}
\noindent In this section we present the proof of the main results
stated in Section~\ref{sect:intro_main}, and only give the proof for Theorem
\ref{t1}  to demonstrate how our methods
works. We follow the well-known approach via the contraction mapping
principle.

The Cauchy problem \eqref{ds} on the time interval $\R$  is
equivalent to
\begin{equation}\label{ee1}
  \begin{split}
u(t)&=e^{it\Delta_\pm}u_{0}-\int_{0}^{t}e^{i(t-s)\Delta_\pm}F(u,
\bar{u}, \nabla
u, \nabla \bar{u})(s)ds\\
& := e^{it\Delta_\pm}u_{0}-I(u)(t)
  \end{split}
\end{equation}
for regular functions. Whenever we refer to a solution of \eqref{ds}, the
operator equation \eqref{ee1} is assumed to be satisfied.

\begin{proof}[\textbf{Proof of Theorem~\ref{t1}}]
  Lemma \ref{le1} implies that $e^{it
    \Delta_\pm} u_0 \in \dot {Z}_{2,1}^{s_m}$ for $u_0\in
    \dot{B}_{2,1}^{s_m}$ and
  \begin{equation*}
    \|e^{it
    \Delta_\pm} u_0\|_{\dot{Z}_{2,1}^{s_m}}\leq\|u_0\|_{\dot{B}_{2,1}^{s_m}}.
  \end{equation*}
  Let
  \begin{equation*}
    \dot B^1_\delta :=\{u_0 \in \dot{B}_{2,1}^{s_m}(\R^d) \mid \|u_0\|_{\dot
      {B}_{2,1}^{s_m}}<\delta\}
  \end{equation*}
  for $\delta=(4C+4)^{-2}$, with the constant $C>0$ from
  \eqref{ne1}.  Define
  \begin{equation*}
    D_r:=\{u \in  \dot{Z}_{2,1}^{s_m} \mid \|u\|_{\dot{Z}_{2,1}^{s_m}}\leq r\},
  \end{equation*}
  with $r=(4C+4)^{-1}$. Then, for $u_0\in \dot B_\delta$ and $u \in
  D_r$,
  \begin{equation*}
    \|e^{it\Delta_\pm}u_{0}-I(u)(t)\|_{\dot{Z}_{2,1}^{s_m}}\leq
    \delta+Cr^2\leq r,
  \end{equation*}
  due to Lemma \ref{ne}.
  Similarly,
  \begin{align*}
     \left \| I(u)-  I(v)
    \right\|_{\dot{Z}_{2,1}^{s_m}} & \leq C(\|u\|^{m-1}_{\dot{Z}_{2,1}^{s_m}}+\|v\|^{m-1}_{\dot{Z}_{2,1}^{s_m}})
    \|u-v\|_{\dot{Z}_{2,1}^{s_m}}\\
    & \leq \frac12 \|u_1-u_2\|_{\dot{Z}_{2,1}^{s_m}},
  \end{align*}
  so $\Phi : D_r\to D_r, u \mapsto e^{it\Delta_\pm}u_{0}-I(u)(t)$ is a strict contraction. It therefore has a unique
  fixed point in $D_r$, which solves \eqref{ee1}.  By
  implicit function theorem the map $M: \dot B_\delta\to D_r$,
  $u_0\mapsto u$ is analytic because the map $(u_0,u) \mapsto e^{it\Delta_\pm}u_{0}-I(u)(t)$ is analytic.  Due to the embedding
  $\dot{Z}_{2,1}^{s_m}\subset C(\R,
  \dot{B}_{2,1}^{s_m}(\R^d))$, the regularity of the initial data persists
  under the time evolution.

We start to prove the scattering property of system \eqref{dsa} for
small data. For initial data $u_0\in \dot{B}_{2,1}^{s_m}(\R^d)$, $\|u_0\|_{\dot{B}_
  {2,1}^{s_m}}<\delta$, the solution $u$, which was constructed
  above, satisfies
  \begin{eqnarray*}
    u(t)=e^{it\Delta_\pm}\Big(u_{0}-\int_{0}^{t}e^{-is\Delta_\pm}F(\nabla
u, \nabla \bar{u})(s)ds\Big) \ , \ t\in (0,\infty)
  \end{eqnarray*}
   So it is sufficient to prove the existence of the limit
   \begin{eqnarray}\label{lim}
   u_{0}-\int_{0}^{t}e^{-is\Delta_\pm}F(\nabla
u, \nabla \bar{u})(s)ds\to
  u_+\text{ in }\dot{B}_{2,1}^{s_m}(\R^d)\text{ as }t\to \infty\end{eqnarray}
    Without loss of generality we may assume $u \in
  C(\R;\dot{B}_
  {2,1}^{s_m}(\R^d))$ such that $\|u\|_{\dot
    Z^{s_m}_{2,1}}=1$.  Estimate
  \eqref{ne1} implies
  \begin{equation*}
    \sum_j 2^{s_mj}\Big\|e^{it\Delta_\pm} P_j \int_{0}^{t}e^{-is\Delta_\pm}F(\nabla
u, \nabla \bar{u})(s)ds\Big\|_{Y^m_j} \leq C,
  \end{equation*}
Our aim is to show \eqref{lim}, it suffices to show that
\begin{eqnarray}\label{scat}
\lim_{t\to \infty}P_j \int_{0}^{t}e^{-is\Delta_\pm}F(\nabla u,
\nabla \bar{u})(s)ds\in L^2.
\end{eqnarray}
Using the argument in Lemma \ref{ne},
we have for $N>N'$,
\begin{align*}
& \Big\| P_j \int_{N'}^{N}e^{-is\Delta_\pm}F(\nabla
u, \nabla \bar{u})(s)ds\Big\|_{L^{2}_{x}}\\
&\leq  C\sum_{j_1\geq j-C}2^{-j/2}
2^{j_{1}}\|\mathbf{1}_{[N',N]}(t)
P_{j_{1}}Q_{j_1,20}^{\e_\pm}u\|_{L^{\infty,2}_{\e}}\|
u\|_{\dot{Z}_{2,1}^{\frac{d}{2}+\frac{m-2}{m-1}}}^{m-1},
\end{align*}
 in view of the finiteness of $\|P_{j}u\|_{S_j}$,
 so the right hand side of the above goes to zero as
 $N'$ goes to infinity. Thus the convergence \eqref{scat} holds.

  The analyticity of the map
  $V_+:u_0\mapsto u_+$ follows from the analyticity of $M$ shown
  above.
  The existence and analyticity of the local inverse $W_+$ follows
  from the inverse function theorem, because $V_+(0)=0$ and by
  \eqref{ne1} we
  observe $D V_+(0)=Id$.
\end{proof}

The proof for Theorem \ref{t2} and Theorem \ref{t3} are similar,
using Lemma \ref{neb} and Lemma \ref{nec} respectively instead of
Lemma \ref{ne}.

\section{Appendix}

\subsection*{Rotated Christ-Kiselev Lemma}

In this section, we generalize the Christ-Kiselev Lemma
\cite{Ch-Kis,whh}. Denote
\begin{align}
& T f(t)=\int_{-\infty}^{\infty}K(t, t')f(t')dt', \ \ \   T_{re}
f(t)=\int_{0}^{t} K(t, t')f(t')dt'.  \label{A10}
\end{align}
If $T: \ Y_1\to X_1$ implies that  $T_{re}: \ Y_1\to X_1$, then $T:
\ Y_1\to X_1$ is said to be a well restriction operator.

The following lemma from \cite{whh}.

\begin{lemma}\label{A11}
Let $T$ be as in \eqref{A10}. We have the following results.
\begin{itemize}
     \item[\rm (1)] If $\min ( p_1, p_2, p_3) > \max(q_1, q_2, q_3, \ q_1q_3 /q_2)$,
     then $T: L_{x_1}^{q_1}L_{\bar{x}}^{q_2}
L_t^{q_3}(\mathbb{R}^{n+1}) \to L_{x_1}^{p_1}L_{\bar{x}}^{p_2}
L_t^{p_3}(\mathbb{R}^{n+1}) $
     is a well restriction operator.
\item[\rm (2)] If $p_0
> (\vee^3_{i=1} q_i) \vee (q_1q_3/q_2)$, then $T:  L_{x_1}^{q_1}L_{\bar{x}}^{q_2}
L_t^{q_3}(\mathbb{R}^{n+1}) \to L_{t}^{p_0}L_{x}^{p_1} (\mathbb{R}^{n+1})$ is a well restriction operator.
\item[\rm (3)] If $q_0< \min{(p_1, p_2, p_3)}$, then
$T: L_{t}^{q_0}L_{x}^{q_1} (\mathbb{R}^{n+1}) \to
L_{x_1}^{p_1}L_{\bar{x}}^{p_2} L_t^{p_3}(\mathbb{R}^{n+1}) $ is
a well restriction operator.

\end{itemize}
\end{lemma}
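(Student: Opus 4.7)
The plan is to adapt the classical Christ--Kiselev argument \cite{Ch-Kis} to the anisotropic mixed-norm setting. The underlying mechanism is a stopping-time decomposition of the input $f$ in the $t'$-variable combined with a Whitney-type covering of the off-diagonal region $\{(t,t'): t'<t\}$; the resulting geometric series converges precisely because the input exponents are strictly dominated by the output exponents, which is the content of each hypothesis. The conditions involving the unusual product $q_1q_3/q_2$ come from Minkowski exchanges that permute the three groups of variables so that the stopping-time variable is the one with the smallest effective scale.

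First, I would normalize $\|f\|_{Y_1}=1$ and, for each $k\geq 0$, partition $\R_{t'}$ into $2^k$ consecutive intervals $I_{k,\ell}$ such that $\|f\chi_{I_{k,\ell}}\|_{Y_1}\sim 2^{-k/\alpha}$, where $\alpha$ denotes the dominant input exponent (the maximum appearing on the right-hand side of each stated inequality). Telescoping the nested partitions gives a decomposition $f=\sum_{k,\ell}g_{k,\ell}$ with disjoint supports at each level $k$. For any fixed $t$, the retarded integral can be rewritten as
\begin{equation*}
 T_{re}f(t)=\sum_{k,\ell}\chi_{J_{k,\ell}}(t)\,(Tg_{k,\ell})(t),
\end{equation*}
where the $J_{k,\ell}$ are pairwise disjoint intervals in $t$ (at each level $k$) coming from the Whitney covering; on $J_{k,\ell}$ the time $t$ sits just to the right of $I_{k,\ell}$, so the retarded truncation $\int_{-\infty}^t$ reproduces exactly $Tg_{k,\ell}(t)$.

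Second, I would apply the output norm $X_1$, use the disjointness of the $J_{k,\ell}$ in $t$ to pull the $\ell$-sum out with a discrete $\ell^{p_3}$ (or $\ell^{p_0}$) norm, and invoke the hypothesized bound $\|Tg_{k,\ell}\|_{X_1}\lesssim\|g_{k,\ell}\|_{Y_1}\sim 2^{-k/\alpha}$. Because at level $k$ there are $2^k$ nonzero pieces, this reduces the whole estimate to convergence of $\sum_k 2^{k/\beta}2^{-k/\alpha}$, where $\beta$ is the smallest relevant output exponent; this sum converges precisely under the stated strict inequality $\alpha<\beta$. The appearance of $q_1q_3/q_2$ in case (1) is forced by the Minkowski exchange needed to move the $L^{p_3}_t$ to the outside: when $q_3\leq q_2$ the exchange is free, but when $q_3>q_2$ the effective input scale in the $t'$-variable becomes $q_1q_3/q_2$ rather than $q_3$. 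Cases (2) and (3) follow the same recipe with $p_0$ or $q_0$ playing the role of the relevant dominant exponent.

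The main obstacle will be the careful bookkeeping of the Minkowski exchanges among the three groups of variables $(x_1,\bar x,t)$: one must ensure that, after the exchanges, the stopping-time decomposition is carried out in the variable whose exponent is strictly smaller than all output exponents; otherwise the geometric series diverges and the argument fails. The four-term maxima in the hypothesis of (1) and the single thresholds in (2) and (3) are precisely what is needed to guarantee that such an ordering exists. Once the correct ordering is identified, the remainder of the proof is a direct transcription of the scalar Christ--Kiselev argument.
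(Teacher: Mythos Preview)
The paper does not prove this lemma at all: it is quoted verbatim from \cite{whh} with the sentence ``The following lemma from \cite{whh}'' and no argument is given. So there is no ``paper's own proof'' to compare against here; the authors simply import the result and then use it (after a rotation) to obtain Lemma~\ref{A20}.

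Your sketch is the standard Christ--Kiselev route and is presumably in the spirit of what \cite{whh} does. A couple of remarks on the sketch itself. First, the stopping-time partition has to be performed in the $t'$-variable because that is the variable in which $T_{re}$ truncates; you say this, but then you speak of ``Minkowski exchanges \dots\ so that the stopping-time variable is the one with the smallest effective scale.'' The variable is fixed as $t'$; what the Minkowski exchanges actually do is convert the \emph{input} norm into one of the form $L^{\alpha}_{t'}(\,\cdot\,)$ with $t'$ on the \emph{outside}, at the cost of replacing the original input exponents by the effective exponent $\alpha=\max(q_1,q_2,q_3,q_1q_3/q_2)$. Only after that reduction does the scalar partition $\|f\chi_{I_{k,\ell}}\|\sim 2^{-k/\alpha}$ make sense, since for a genuinely anisotropic norm with $t$ innermost the map $\tau\mapsto\|f\chi_{(-\infty,\tau]}\|_{Y_1}$ need not distribute the mass in the additive way your construction requires. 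Second, on the output side you need the $J_{k,\ell}$-disjointness in $t$ to yield an $\ell^{\beta}$ with $\beta=\min(p_1,p_2,p_3)$; this again requires a Minkowski step to move the $L^{p_3}_t$ outward, and it is here that the minimum over the $p_i$'s enters. With those two clarifications the geometric-series bookkeeping you describe goes through.
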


By a rotation argument, we can generalize Lemma \ref{A11} to the following.

\begin{lemma}\label{A20}
Let $T$ be as in \eqref{A10}. We have the following results.
\begin{itemize}
\item[\rm (1)] If $ p > 2$,
then $T: L_{\e}^{1,2}(\mathbb{R}^{n+1}) \to L_{\e'}^{p,\infty}(\mathbb{R}^{n+1}) $
is a well restriction operator.

\item[\rm (2)] If $p_0
> 2$, then $T:  L_{\e}^{1,2}(\mathbb{R}^{n+1})  \to L_{t}^{p_0}L^{p_1}_{x}
  (\mathbb{R}^{n+1})$ is a well restriction operator.

\item[\rm (3)] If $q_0< \min{(p_1, p_2)}$, then
$T: L_{t}^{q_0}L_{x}^{q_1} (\mathbb{R}^{n+1}) \to
L_{\e}^{p_1,p_2}(\mathbb{R}^{n+1}) $ is
a well restriction operator.

\end{itemize}
\end{lemma}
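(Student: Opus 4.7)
The plan is to reduce each of the three claims to the corresponding Cartesian statement in Lemma \ref{A11} by conjugating $T$ with the spatial rotations $A_\e$ and $A_{\e'}$ provided by \eqref{A}. The identity \eqref{A1} recasts every anisotropic norm $L^{p,q}_{\bullet}$ as a Cartesian mixed norm $L^p_{x_1} L^q_{\bar{x},t}$. Since $T$ integrates only in the time variable, the conjugated operator retains the kernel form \eqref{A10} (with $K(t,t')$ replaced by its rotation-conjugate), and the retarded version $T_{re}$ conjugates the same way, so any well restriction property of the conjugated operator transfers verbatim back to the original.

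For part (2), the output $L^{p_0}_t L^{p_1}_x$ is already spatially rotation-invariant, so only the input side needs to be conjugated by $A_\e$, converting the input norm to $L^1_{x_1} L^2_{\bar{x}} L^2_t$. Lemma \ref{A11}(2) applied with $q_1 = 1$ and $q_2 = q_3 = 2$ gives exactly the hypothesis $p_0 > \max(q_1, q_2, q_3, q_1 q_3/q_2) = 2$. Part (3) is symmetric: the input $L^{q_0}_t L^{q_1}_x$ is rotation-invariant, while conjugating the output by $A_\e$ produces $L^{p_1}_{x_1} L^{p_2}_{\bar{x}} L^{p_2}_t$, and Lemma \ref{A11}(3) then requires $q_0 < \min(p_1, p_2, p_2) = \min(p_1, p_2)$.

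Part (1) involves anisotropic norms along two possibly different directions. The remedy is to apply \emph{two} distinct spatial rotations: $A_\e$ on the input side and $A_{\e'}$ on the output side. The conjugated operator then maps $L^1_{x_1} L^2_{\bar{x},t} \to L^p_{x_1} L^\infty_{\bar{x},t}$, and Lemma \ref{A11}(1) with $q_1=1$, $q_2=q_3=2$, $p_1=p$, $p_2=p_3=\infty$ yields the required condition $p > \max(1,2,2,1) = 2$.

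The step most requiring care is the verification that using distinct spatial rotations on input and output in part (1) leaves the operator in the abstract form \eqref{A10}, so that the time-interval decomposition at the heart of the Christ-Kiselev argument behind Lemma \ref{A11} remains applicable. This is straightforward because both rotations are purely spatial isometries and therefore commute with the time integration defining $T$ and $T_{re}$; the conjugated kernel $\tilde{K}(t,t') h = (K(t,t')(h \circ A_\e)) \circ A_{\e'}^{-1}$ still acts as a time kernel between spatial function spaces, and its retardation is literally the retardation of the conjugated operator.
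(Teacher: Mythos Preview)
Your proposal is correct and follows essentially the same route as the paper: both conjugate $T$ by the spatial rotations $A_\e$ (and $A_{\e'}$ in part (1)) via \eqref{A1}, observe that the conjugated operator $\tilde T$ still has the kernel form \eqref{A10} with $(\tilde T)_{re}$ equal to the conjugate of $T_{re}$, and then invoke the corresponding case of Lemma~\ref{A11}. Your write-up is in fact slightly more explicit than the paper's in justifying why distinct input/output rotations preserve the time-kernel structure.
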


\begin{proof}[{Proof of Lemma~\ref{A20}}]
For (1), it suffices to show
\begin{align*}
\Big\| [T_{re}f](x,t)\Big\|_{L_{\e'}^{p,\infty}}\leq \|f(x,t)\|_{L_{\e}^{1,2}},
\end{align*}
under the assumption
\begin{align*}
\Big\| [Tf](x,t)\Big\|_{L_{\e'}^{p,\infty}}\leq \|f(x,t)\|_{L_{\e}^{1,2}}.
\end{align*}
In view of \eqref{A1}, it is sufficient to show
\begin{align}\label{af}
\Big\| [T_{re}f](A'^{-1}x,t)\Big\|_{L_{x_1}^{p}L_{\bar{x},t}^{\infty}}\leq \|f(A^{-1}x,t)\|_{L_{x_1}^{1}L_{\bar{x},t}^{2}},
\end{align}
under the assumption
\begin{align*}
\Big\| [Tf](A'^{-1}x,t)\Big\|_{L_{x_1}^{p}L_{\bar{x},t}^{\infty}}\leq \|f(A^{-1}x,t)\|_{L_{x_1}^{1}L_{\bar{x},t}^{2}},
\end{align*}
if we denote
\begin{align*}
& [\tilde{T} f](x,t)=[T (f(A\cdot))](A'^{-1}x,t), \ \ \   [\tilde{T}_{re} f](x,t)=[T_{re} (f(A\cdot))](A'^{-1}x,t).
\end{align*}
then apply Lemma \ref{A11} (1) to $\tilde{T}$, it follows \eqref{af}.

The proofs for part (2), (3) are similar, thus we omit the details.
\end{proof}

\noindent{\bf Acknowledgment.} Part of the work was carried out
while the second author was visiting the Department of Mathematics
at the University of Chicago under the advising of Prof. Carlos E.
Kenig and the auspices of China Scholarship Council. The authors are
indebted to Prof. Kenig for his valuable suggestions. The second
author also grateful to Prof. Guo for his encouragement and valuable
comments.

\end{document}